      \string\usetikzlibrary{decorations.markings} to use arrows with markings}{}}{}%
\newcommand{\mapsfrom}{\mathrel{\reflectbox{\ensuremath{\mapsto}}}}
\newcommand{\id}{\mathrm{id}}
\newcommand{\Gl}{\mathrm{Gl}}
\newcommand{\Hom}{\mathrm{Hom}}
\newcommand{\Log}{\mathrm{Log}}
\newcommand{\pre}{\mathrm{pre}}
\newcommand{\Mon}{\mathrm{Mon}}
\newcommand{\Set}{\mathrm{Set}}
\newcommand{\Ring}{\mathrm{Ring}}
\newcommand{\Forget}{\mathrm{Forget}}
\newcommand{\Free}{\mathrm{Free}}
\newcommand{\Ho}{\mathrm{Ho}}
\newcommand{\straff}{\mathrm{straff}}
\newcommand{\m}{\mathfrak{m}}
\newcommand{\Z}{\mathds Z}
\newcommand{\N}{\mathds N}
\newcommand{\Q}{\mathds Q}
\newcommand{\R}{\mathds R}
\newcommand{\bC}{\mathds C}
\newcommand{\LL}{\mathds L}
\newcommand{\Top}{\mathrm{top}}
\newcommand{\et}{\textit{\'et}}
\newcommand{\sh}{\mathrm{sh}}
\newcommand{\Mod}{\mathrm{Mod}}
\newcommand{\ZZ}{\mathbb{Z}}
\renewcommand{\P}{{\mathbb P}}
\newcommand{\gp}{\mathrm{gp}}
\newcommand{\cC}{{\mathcal C}}
\newcommand{\cF}{{\mathcal F}}
\newcommand{\cG}{{\mathcal G}}
\newcommand{\cO}{{\mathcal O}}
\newcommand{\cR}{{\mathcal R}}
\newcommand{\cU}{{\mathcal U}}
\newcommand{\cV}{{\mathcal V}}
\newcommand{\cX}{{\mathcal X}}
\newcommand{\cY}{{\mathcal Y}}
\newcommand{\liso}{\mathrel{\hbox{$\longrightarrow$} \kern-2.4ex\lower-1ex\hbox{$\scriptstyle\sim$}\kern1.7ex}}
\newcommand{\set}{\textit{s\'et}}
\newcommand{\zerounderset}[3][\mathord]{%
  #1{\vtop{
    \let\\\cr
    \baselineskip\z@skip\lineskip.25ex
    \ialign{\hidewidth$##$\hidewidth\crcr
      \omit$#3$\cr
      #2\crcr
    }%
  }}%
}
\newtheoremstyle{alexthm}
  {}
  {}
  {\sl }
  {}
  {\bf}
  {.}
  {.5em}
  {}
\theoremstyle{alexthm}
\newtheorem{theorem}{Theorem}[section]
\newtheorem*{theorem*}{Theorem}
\newtheorem{corollary}[theorem]{Corollary}
\newtheorem{proposition}[theorem]{Proposition}
\newtheorem{lemma}[theorem]{Lemma}
\newtheorem*{lemma*}{Lemma}
\newtheoremstyle{alexdef}
  {}
  {}
  {\rm }
  {}
  {\bf}
  {.}
  {.5em}
  {}
\theoremstyle{alexdef}
\newtheorem*{example*}{Example}
\newtheorem{example}[theorem]{Example}
\newtheorem{remark}[theorem]{Remark}
\newtheorem{definition}[theorem]{Definition}
\DeclareMathOperator{\Spec}{\mathrm{Spec}}
\DeclareMathOperator{\Spa}{\mathrm{Spa}}
\DeclareMathOperator{\supp}{\mathrm{supp}}
\DeclareMathOperator*{\colim}{colim}
\DeclareMathOperator{\ad}{\textit{ad}}
\definecolor{darklimegreen}{RGB}{31,142,8}
\begin{document}

\hfuzz=4pt
\title{Logarithmic differentials on discretely ringed adic spaces}
\author{Katharina H\"{u}bner}
\email{khuebner@mathi.uni-heidelberg.de}
\date{\today}
\address{Einstein Institute of Mathematics, Hebrew University, Jerusalem}
\thanks{This research is partly supported by ERC Consolidator Grant 770922 - BirNonArchGeom.
 Moreover the author acknowledges support by Deutsche Forschungsgemeinschaft  (DFG) through the Collaborative Research Centre TRR 326 "Geometry and Arithmetic of Uniformized Structures", project number 444845124.}

\begin{abstract}
 On a smooth discretely ringed adic space~$\cX$ over a field~$k$ we define a subsheaf $\Omega_{\cX}^+$ of the sheaf of differentials $\Omega_{\cX}$.
 It is defined in a similar way as the subsheaf $\cO^+_{\cX}$ of~$\cO_{\cX}$ using K\"ahler seminorms on $\Omega_{\cX}$.
 We give a description of $\Omega^+_{\cX}$ in terms of logarithmic differentials.
 If~$\cX$ is of the form $\Spa(X,\bar{X})$ for a scheme~$\bar{X}$ and an open subscheme~$X$ such that the corresponding log structure on~$\bar{X}$ is smooth, we show that~$\Omega^+_{\cX}(\cX)$ is isomorphic to the logarithmic differentials of $(X,\bar{X})$.
\end{abstract}

\maketitle
\tableofcontents

\section{Introduction}
Consider a discretely ringed adic space~$\cX$ over a valued field~$(k,k^+)$.
Here, discretely ringed means that~$\cX$ is locally isomorphic to the spectrum of a Huber pair $(A,A^+)$, where~$A$ and~$A^+$ carry the discrete topology.
The structure sheaf $\cO_{\cX}$ contains a natural subsheaf~$\cO^+_{\cX}$, the subsheaf of sections with germs of absolute valuation less or equal to one.
One might ask for a similar partner~$\Omega^+$ for the sheaf of differentials $\Omega_{\cX} = \Omega^1_{\cX/k}$.
It should be a subsheaf of~$\Omega:= \Omega^1_{\cX}$ defined by a condition $|\omega_x| \le 1$ for suitable $\cO_{\cX,x}$-seminorms $|\cdot|$ on the stalks~$\Omega_{\cX,x}$ for every point $x \in \cX$.
Such a sheaf~$\Omega^+$ will be useful for investigating cohomological purity for $p$-torsion sheaves in characteristic $p > 0$.
As explained in \cite{Milne86}, \S~2 the logarithmic de Rham sheaves $\nu(r)$ play a crucial role in cohomological purity.
They are defined by an exact sequence
\[
 0 \to \nu(r) \to \Omega^r_{d=0} \overset{C-1}{\longrightarrow} \Omega^r \to 0,
\]
in the \'etale topology.
Here, ``$d=0$'' refers to closed forms and~$C$ denotes the Cartier operator (see \cite{Milne76}, \S~1).
However, we expect purity to hold only for the tame topology (see \cite{HueAd} for the definition) and the above sequence is not exact in the tame topology.
We hope to solve this problem by replacing~$\Omega^r$ with $\Omega^{r,+}$.
This will be subject to future investigations.

In this article we construct a sheaf~$\Omega^+$ as above using the K\"ahler seminorms (cf. \cite{Tem16}, \S~4.1 for the real valued case) on the stalks~$\Omega_x$ defined by
\[
 |\omega|_{\Omega} := \inf_{\omega = \sum_i f_i dg_i} \max_i \{|f_i| \cdot |g_i|\},
\]
where the infimum is taken over all representations of~$\omega$ as a finite sum $\sum_i f_i dg_i$ (see \cref{section_Kaehler_local}).
However, we need to take care of where we take the infimum.
The value group~$\Gamma$ of the valuation on~$\cO_x$ is not complete, in general.
We use the new concept of rangers (studied in joint ongoing work of the author with Michael Temkin) to present a construction that serves as completion (see \cref{section_rangers}).
As a further preparation for the definition of the Kähler seminorm in \cref{section_Kaehler_local} we study seminorms taking values in rangers in \cref{section_seminorms}.
In \cref{section_Kaehler_adic} we prove that~$\Omega^+$ is indeed a sheaf on~$\cX$.
In fact, it is even a sheaf on the tame site~$\cX_t$ of~$\cX$ but not on the \'etale site.

It turns out that~$\Omega^+$ has a description in terms of logarithmic differentials.
After a preliminary section on the logarithmic cotangent complex (see \cref{section_log_cotangent}), we study logarithmic differentials in \cref{section_log_diff_adic}.
Let us specify the connection of logarithmic differentials with~$\Omega^+$.
For a Huber pair $(A,A^+)$ over~$k$ such that~$A$ is a localization of~$A^+$, we equip~$A^+$ with the log structure $(A^+ \cap A^\times \to A^+)$ on~$A^+$.
The corresponding log structure on $\Spec A^+$ is the compactifying log structure associated with the open embedding $\Spec A \hookrightarrow \Spec A^+$.
The corresponding logarithmic differentials~$\Omega^{\log}_{(A,A^+)}$ define a presheaf~$\Omega^{\log}$ but not a sheaf.
We prove that the sheafification of~$\Omega^{\log}$ is~$\Omega^+$ in \cref{section_Kaehler_adic}.
An important input is that for a local Huber pair $(A,A^+)$ over~$k$ the logarithmic differentials~$\Omega^{\log}_{(A,A^+)}$ imbed into~$\Omega_A$.
For this we need to put some restrictions on $(k,k^+)$.
To sum up we have the following theorem (see \cref{Kaeher_sheaf} and \cref{sheafification_logarithmic}):

\begin{theorem}
  Let~$\cX$ be a discretely ringed adic space over $(k,k^+)$.
 \begin{enumerate}
  \item $\Omega^+$ is a sheaf on the tame site~$\cX_t$.
  \item	Assume that either the residue characteristic of~$k^+$ is zero, $k$ is algebraically closed, or~$k = k^+$ is perfect.
		Then~$\Omega^+$ is the Zariski sheafification of the presheaf of logarithmic differentials.
 \end{enumerate}
\end{theorem}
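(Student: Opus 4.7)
The statement splits naturally in two. For (1), since $\Omega = \Omega^1_{\cX/k}$ is already a sheaf on $\cX_t$ and since $\Omega^+\subseteq\Omega$ is cut out by a stalk-local condition, the sheaf property for $\Omega^+$ reduces to checking the following descent statement: for any tame cover $\{V_i\to U\}$ and any $\omega\in\Omega(U)$ with $\omega|_{V_i}\in\Omega^+(V_i)$ for every $i$, we have $\omega\in\Omega^+(U)$. Fix $x\in U$ and choose a preimage $y\in V_i$; the valuation at $y$ extends that at $x$ along the induced local map of discretely ringed Huber pairs. The crucial input is a seminorm-preservation lemma: for a tame local map, the pullback on differentials is an isometry for the K\"ahler seminorm. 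Granted this, $|\omega_x|_\Omega = |(\omega|_{V_i})_y|_\Omega\le 1$, completing (1).

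For (2), I would construct a morphism of Zariski presheaves $\varphi\colon\Omega^{\log}\to\Omega$ by sending $da\mapsto da$ for $a\in A^+$ and $d\log u\mapsto u^{-1}du$ for $u\in A^+\setminus\m_A$; the latter makes sense in $\Omega_A$ since $u$ is a unit of $A$. Both formulas land in $\Omega^+$: at every point $|da|_\Omega\le|a|\le 1$ and $|u^{-1}du|_\Omega\le|u^{-1}||u|=1$. Since $\Omega^+$ is already a Zariski sheaf by (1), it suffices to show that $\varphi$ induces an isomorphism on Zariski stalks. Injectivity at $x$ is exactly the embedding $\Omega^{\log}_{(A_x,A_x^+)}\hookrightarrow\Omega_{A_x}$ established in \cref{section_log_diff_adic}, which is where the three hypotheses on $(k,k^+)$ enter. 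Surjectivity proceeds by taking $\omega\in\Omega^+_x$, choosing (after shrinking $U$) a representation $\omega=\sum_i f_i\,dg_i$ with $|f_i(x)||g_i(x)|\le 1$, and rewriting each summand via the Leibniz identity and constant translation $dg_i = d(g_i+c)$ for suitable $c\in k^+$, so as to express $\omega$ as an $A_x^+$-combination of elements of the form $da$ ($a\in A_x^+$) and $d\log u$ ($u\in A_x^+\setminus\m_{A_x}$). This realizes $\omega$ in the image of $\varphi$.

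The hard parts are the two supporting lemmas. In Part (1), the seminorm-preservation lemma under tame pullback demands a careful analysis of how representations $\sum f_i\,dg_i$ behave across unramified extensions of valued local rings. In Part (2), the surjectivity argument is the most delicate step: the infimum defining $|\cdot|_\Omega$ need not be attained, so passing from the bare inequality $|\omega|_\Omega\le 1$ to a concrete representation with $\max_i|f_i(x)||g_i(x)|\le 1$ may require an approximation step or a Zariski shrinking, and the rewriting of $f_i\,dg_i$ into logarithmic form is routine only when $g_i$ is a unit; the remaining cases require ambient structure in $k^+$ supplied precisely by the three hypotheses of the theorem.
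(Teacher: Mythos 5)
Your overall architecture coincides with the paper's. Part (1) is proved exactly as you describe (\cref{Kaeher_sheaf}): injectivity of $\Omega^+\to\Omega$ reduces the sheaf axiom to showing that a glued section $\omega$ satisfies $|\omega|_x\le 1$ at every point, which follows by lifting $x$ to some member of the cover and invoking the isometry statement for tame morphisms (\cref{isometry}). Part (2) is likewise proved by checking that $\Omega^{\log}\to\Omega$ factors through $\Omega^+$ and induces an isomorphism on stalks (\cref{sheafification_logarithmic}), with injectivity coming from the embedding \cref{Omega_injective} and surjectivity from the identification of the unit ball of the K\"ahler seminorm on $\Omega_A$ with $\Omega^{\log}_{(A,A^+)}$ for a local Huber pair (\cref{unit_ball}).

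Two caveats. First, the isometry lemma you defer is the real content of Part (1), and the paper does not prove it by tracking representations $\sum_i f_i\,dg_i$ across extensions; instead it shows via the logarithmic cotangent complex that $\LL^{\log}_{(B,B^+)/(A,A^+)}\cong 0$ for tame extensions (\cref{cotangent_tame}), deduces $\Omega^{\log}_{(B,B^+)}\cong\Omega^{\log}_{(A,A^+)}\otimes_{A^+}B^+$, and then uses that on logarithmic differentials the K\"ahler seminorm agrees with the intrinsic adic seminorm (\cref{Kaehler_adic}), which is manifestly preserved under such base change. Second, in your surjectivity step the constant-translation device $dg_i=d(g_i+c)$ fails in general: if $g_i\in\m$ then $|g_i|=0$, so $|f_i|\,|g_i|\le 1$ puts no bound on $|f_i|$, while $|f_i|\,|g_i+c|=|f_i|\,|c|$ can exceed $1$ (e.g.\ when $k$ is trivially valued in the residue field of $A$), so $f_i(g_i+c)\notin A^+$ and the summand is not logarithmic. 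The paper's \cref{unit_ball} instead argues by cases: for $g_i\notin\m$ it uses that a local Huber pair is strict, writing $g_i=g_i'/g_i''$ with $g_i',g_i''\in A^+\setminus\m$, so that $f_i\,dg_i=f_ig_i\bigl(dg_i'/g_i'-dg_i''/g_i''\bigr)$ with $f_ig_i\in A^+$; for $g_i\in\m$ it uses the Leibniz rule $f_i\,dg_i=d(f_ig_i)-g_i\,df_i$ together with $\m\subseteq A^+$. Relatedly, the hypotheses on $(k,k^+)$ do not enter through any ``ambient structure'' needed for the rewriting; they enter only through the torsion-freeness results of Gabber--Ramero (\cref{Omega_valuation_tf}) that give the embedding $\Omega^{\log}_{(A,A^+)}\hookrightarrow\Omega_A$ on which both directions of the stalkwise comparison rest. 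Your observation that the infimum defining $|\cdot|_\Omega$ need not be attained is a fair one which the paper passes over silently.
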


The last section is dedicated to a study of logarithmic differentials on adic spaces of the form $\Spa(Y,\bar{Y})$, where~$\bar{Y}$ is a scheme over the field~$k$ and~$Y$ is an open subscheme such that the associated log structure on~$\bar{Y}$ is log smooth.
We call pairs $(Y,\bar{Y})$ of this type log-smooth pairs over~$k$.
The main result (\cref{main_theorem}) is the following

\begin{theorem}
 Let $(Y,\bar{Y})$ be log smooth.
 Then
 \[
  \Omega^+(\Spa(Y,\bar{Y})) \cong \Omega^{\log}(Y,\bar{Y}),
 \]
 where~$\Omega^{\log}$ on the right hand side is the sheaf of logarithmic differentials on the log scheme associated with $(Y,\bar{Y})$.
\end{theorem}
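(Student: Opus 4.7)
The plan is to bootstrap from the preceding theorem, which identifies $\Omega^+$ with the Zariski sheafification of the presheaf $\Omega^{\log}$ of logarithmic differentials on the adic space. The desired isomorphism will then follow once (a) the statement is reduced to affine log smooth pieces by a Zariski cover, and (b) on each such piece the adic presheaf $\Omega^{\log}_{(A, \bar A)}$ is shown to already be a sheaf for the Zariski topology of $\Spa(U, \bar U)$ and to equal the global sections of $\Omega^{\log}(U, \bar U)$ in the log-scheme sense.

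First I would pick a Zariski cover $\{\bar U_i = \Spec \bar A_i\}$ of $\bar Y$ refined enough so that $U_i := Y \cap \bar U_i = \Spec A_i$ with $A_i$ a localization of $\bar A_i$, and so that each pair $(U_i, \bar U_i)$ admits a fine chart $P_i \to \bar A_i$ for the associated log structure (possible by log smoothness, combined with the fact that $Y \hookrightarrow \bar Y$ is a quasi-compact open immersion). The opens $\Spa(U_i, \bar U_i)$ form a Zariski cover of $\Spa(Y, \bar Y)$, with pairwise intersections of the same form, and $\Omega^{\log}(Y, \bar Y)$ is a Zariski sheaf on $\bar Y$. Hence, via the sheaf property of $\Omega^+$, the theorem reduces to proving
\[
\Omega^+(\Spa(U, \bar U)) \cong \Omega^{\log}(U, \bar U)
\]
for each such affine piece, compatibly on overlaps. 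On such a piece, the total log structure $\bar A \cap A^\times \to \bar A$ from the preceding theorem coincides with the log structure on $\bar U$ associated to the open immersion $U \hookrightarrow \bar U$, so the two definitions of $\Omega^{\log}_{(A, \bar A)}$ agree as $\bar A$-modules.

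It remains to check that the adic presheaf $\Omega^{\log}$ is already a sheaf on the Zariski site of $\Spa(U, \bar U)$ in the log smooth case: equivalently, that forming $\Omega^{\log}$ commutes with Zariski localization of $\bar A$. Using the chart $P \to \bar A$ with $\bar A$ \'etale over $k[P]$ and $A = \bar A[P^{-1}]$, the module $\Omega^{\log}(U, \bar U)$ is locally free with basis given by $d\log$ of a basis of $P^{\gp}$ together with the generators of $\Omega_{\bar A/k[P]}$, and this description is manifestly compatible with Zariski localization. Hence sheafification has no effect on global sections, yielding $\Omega^+(\Spa(U, \bar U)) = \Omega^{\log}(U, \bar U)$, and gluing produces the theorem. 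The main obstacle is precisely this log-smooth sheaf property: without log smoothness the presheaf $\Omega^{\log}$ fails to be a sheaf, so the argument depends crucially on the local freeness of $\Omega^{\log}(U, \bar U)$ afforded by log smoothness, and one must additionally verify that the hypotheses on $(k, k^+)$ required by the preceding sheafification theorem (for instance $k = k^+$ perfect) are in force in the present situation.
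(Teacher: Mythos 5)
Your overall strategy---reduce to affine log smooth pieces and show that on such a piece the sheafification of $\Omega^{\log}$ does not change the global sections---is the right shape, and your observation that local freeness of $\Omega^{\log}$ in the log smooth case must be the crucial input is correct. But the pivotal step contains a genuine gap: you claim that it suffices to check that forming $\Omega^{\log}_{(A,\bar A)}$ commutes with Zariski localization of $\bar A$, because then the presheaf is ``already a sheaf on the Zariski site of $\Spa(U,\bar U)$.'' This conflates the Zariski topology of the scheme $\bar U$ with the topology of the adic space $\Spa(U,\bar U)$. The latter is strictly finer: a strict affinoid open $\Spa(B,B^+)\subseteq\Spa(A,\bar A)$ is given by an open immersion $\Spec B\to\Spec A$ together with $B^+$ the normalization in $B$ of \emph{any} finite type $\bar A$-algebra, so open covers of $\Spa(A,\bar A)$ include those induced by blowups and other modifications $\bar Y'\to\Spec\bar A$ that are isomorphisms over $\Spec A$ (indeed $\Spa(A,\bar Y')=\Spa(A,\bar A)$ as adic spaces). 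Compatibility with Zariski localization of $\bar A$ says nothing about descending a compatible family of logarithmic differentials on the affine pieces of such a modification back down to $\Spec\bar A$. That descent is exactly the hard content of the theorem, and it is also why the naive presheaf fails to be a sheaf in the non-log-smooth example of the paper (the cover there is by pieces of a blowup, not by Zariski opens of $\Spec A^+$).

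The paper closes this gap with the machinery of unramified sheaves: since $\bar Y$ is normal and $\Omega^{n,\log}$ is locally free, sections are determined by their restrictions to any open containing all points of codimension $\le 1$ (Matsumura's theorem that a noetherian normal domain is the intersection of its localizations at height one primes); every cover of $\Spa(Y,\bar Y)$ admits a Riemann--Zariski refinement; and a modification is an isomorphism in codimension one over a suitable open. This is used to build an auxiliary \emph{sheaf} $\Omega^{n,\log}_{\lim}$ on $\Spa(X,S)$ with $\Omega^{n,\log}_{\lim}(\Spa(A,\bar A))=\Omega^{n,\log}_{(A,\bar A)}$ (via the final-object lemma), so that the composite
\[
 \Omega^{n,\log}_{(A,\bar A)}\to\Omega^{n,+}(\Spa(A,\bar A))\to\Omega^{n,\log}_{\lim}(\Spa(A,\bar A))
\]
is the identity and a diagram chase forces the first map to be an isomorphism. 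Your proposal would need an argument of this kind (or an equivalent codimension-one/descent-along-modifications argument) to justify the claim that sheafification has no effect; as written, the step ``manifestly compatible with Zariski localization, hence a sheaf on $\Spa(U,\bar U)$'' does not go through. Your remaining points (the two notions of $\Omega^{\log}_{(A,\bar A)}$ agree for log smooth pairs, and the hypotheses on $(k,k^+)$ hold since the paper fixes $k$ perfect with $k=k^+$ in this section) are fine.
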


The crucial point is that on the adic space $\Spa(Y,\bar{Y})$ we do not need to sheafify~$\Omega^{\log}$ in order to compute the global sections of~$\Omega^+$.
This makes~$\Omega^+$ a lot more accessible and it is possible to use the theory of logarithmic differentials on log schemes to investigate~$\Omega^+$.
We also want to stress that the above isomorphism is obtained without assuming resolution of singularities.
The proof relies on the theory of unramified sheaves (see \cref{section_unramified_sheaves}), a notion adapted from \cite{Morel12}, and techniques similar to the ones applied in \cite{HKK17} for studying cdh differentials.

\medskip\noindent
\emph{Acknowledgement:} The author wants to thank Michael Temkin for drawing her attention to K\"ahler seminorms.
Moreover, many thanks go to Steffen Sagave for his help with the logarithmic cotangent complex.

\section{The logarithmic cotangent complex} \label{section_log_cotangent}

On a discretely ringed adic space~$\cX$ we want to study a subsheaf~$\Omega_\cX^+$ of the sheaf of differentials~$\Omega_\cX$ which is closely related to logarithmic differentials.
For future work it will be important to us that this is also a sheaf for the tame topology (not only on the topological space~$\cX$).
For this reason we need to study the log cotangent complex of a tame extension of valuation rings.
The reader not interested in the resulting technical sections~\ref{section_log_cotangent} and~\ref{section_unramified_tame} can skip them and jump to \cref{section_log_diff_adic}.

In \cite{Ols05} Olsson describes two approaches for a logarithmic cotangent complex.
His own construction using log stacks has the advantage that it is trivial for log smooth morphisms.
However, transitivity triangles only exist under certain conditions and the construction only works for fine log schemes, i.e. under strong finiteness conditions that are not satisfied in our situation.
Gabber's version described in \cite{Ols05}, \S 8 is more functorial but it has the disadvantage that it is not trivial for all log smooth morphisms.
We will use Gabber's log cotangent complex and compare it in special situations to Olsson's in order to make explicit computations.
Slightly more generally we will define the log cotangent complex for simplicial prelog rings as described for instance in \cite{Bha12}, \S 5 or \cite{SSV16}, \S 4.

Let us start with reviewing some definitions.
Recall that a prelog ring is a ring~$R$ and a (commutative) monoid~$M$ together with a homomorphism of monoids $M \to R$, where~$R$ is considered as a monoid with its multiplicative structure.
A log ring is a prelog ring $\iota: M \to R$ inducing an isomorphism $\iota^{-1}(R^{\times}) \to R^{\times}$.
The inclusion of the category of log rings into prelog rings has a left adjoint, logification (see \cite{Ogus18}, Chapter~II, Proposition~1.1.5)
We write $(M^a \to R)$ or $(M \to R)^a$ for the logification of $(M \to R)$.

Denote by $\Set$, $\Mon$, $\Ring$, and $\Log\Ring^{\pre}$ the categories of sets, monoids, rings, and prelog rings.
We write $s\Set$, $s\Mon$, $s\Ring$, and $s\Log\Ring^{\pre}$ for the respective categories of simplicial objects.
We endow $s\Set$ with the standard model structure, i.e. the weak equivalences are the maps inducing a weak homotopy equivalence on geometric realizations and the fibrations are the Kan fibrations.
Defining the (trivial) fibrations to be the homomorphisms that are (trivial) fibrations on the underlying category of simplicial sets, we obtain a closed model structure on $s\Ring$ and $s\Mon$ (see \cite{Bha12}, \S 4).
Now consider the forgetful functor
\[
 \Forget^{s\Log\Ring^{\pre}}_{s\Mon \times s\Ring} : s\Log\Ring^{\pre} \longrightarrow s\Mon \times s\Ring
\]
mapping $(M \to A)$ to $(M,A)$.
By \cite{SSV16}, Proposition~3.3 there is a projective proper simplicial cellular model structure on $s\Log\Ring^{\pre}$ whose fibrations and weak equivalences are the maps that are mapped to fibrations and weak equivalences, respectively, under $\Forget^{s\Log\Ring^{\pre}}_{s\Mon \times s\Ring}$.
With respect to this model structure $\Forget^{s\Log\Ring^{\pre}}_{s\Mon \times s\Ring}$ is a left and right Quillen functor (\cite{Bha12}, Propositions~5.3 and~5.5).
Its left adjoint is the functor $\Free^{s\Mon \times s\Ring}_{s\Log\Ring^{\pre}}$ mapping $(M,A)$ to $(M \to A[M])$.

For a homomorphism $(M \to A) \to (N \to B)$ of simplicial prelog rings we write $s\Log\Ring^{\pre}_{(M \to A) // (N \to B)}$ for the category of simplicial $(M \to A)$-algebras over $(N \to B)$.
It inherits a model structure from $s\Log\Ring^{\pre}$.
Consider the functor
\begin{align*}
 \Omega: s\Log\Ring^{\pre}_{(M \to A) // (N \to B)} &\to \Mod_{B}	\\
 (L \to C) &\mapsto \Omega^1_{(L \to C)/(N \to B)} \otimes_{C} B
\end{align*}
where~$\Omega^1$ is defined by applying to each level the functor of log K\"ahler differentials (see \cite{Ogus18}, Chapter~IV, Proposition~1.1.2; note that a log ring in loc. cit. is what we here call a prelog ring).
Being a left Quillen functor (\cite{SSV16}, Lemma~4.6), it has a left derived functor
\[
 L\Omega : \Ho(s\Log\Ring^{\pre}_{(M \to A) // (N \to B)}) \to \Ho(\Mod_{B})
\]
on the respective homotopy categories.
The image of $(N \to B)$ under~$L\Omega$ is called the \emph{cotangent complex} of $(N \to B)$ and denoted $\LL_{(M \to A) / (N \to B)}$.
For a homomorphism $(M \to A) \to (N \to B)$ of discrete log rings it can be computed as follows.
For shortness write $F:= \Forget^{\Log\Ring^{\pre}}_{\Mon \times \Ring}$ and $G:= \Free^{\Mon \times \Ring}_{\Log\Ring^{\pre}}$ (the discrete versions of the above considered functors).
We have a canonical free resolution
\begin{equation} \label{canonical_resolution}
 \begin{tikzcd}
  \ldots	\ar[r,shift left=1.5]	\ar[r,shift right=1.5]	\ar[r,shift left=4.5]	\ar[r,shift right=4.5]	& GFGF(N \to B)	\ar[l]	\ar[l,shift left=3]	\ar[l,shift right=3]	\ar[r]	\ar[r,shift left=3]	\ar[r,shift right=3]	& GF(N \to B)	\ar[l,shift left=1.5]	\ar[l,shift right=1.5]	\ar[r,shift left=1.5]	\ar[r,shift right=1.5]	& (N \to B)	\ar[l],
 \end{tikzcd}
\end{equation}
which we denote by $P_{\bullet} \to (N \to B)$.
Then $\LL_{(M \to A) / (N \to B)}$ is represented by $\Omega(P_{\bullet})$.
In particular, we recover Gabber's definition (\cite{Ols05}, Definition~8.5).

The cotangent complex has the following two important properties (see \cite{SSV16}, Proposition~4.12).

\begin{proposition} \label{transitivity_BC}
 \begin{enumerate}[(i)]
  \item	\emph{Transitivity.}
		Let $(M \to A) \to (N \to B) \to (K \to C)$ be maps of simplicial prelog rings.
		Then there is a homotopy cofiber sequence in $\Ho(\Mod_{C})$
		\[
		 C \otimes^h_{B} \LL_{(N \to B)/(M \to A)} \to \LL_{(K \to C)/(M \to A)} \to \LL_{(K \to C)/(N \to B)}.
		\]
  \item	\emph{Base change.}
		Let
		\[
		 \begin{tikzcd}
		  (N' \to B')			& (N \to B)	\ar[l]	\\
		  (M' \to A')	\ar[u]	& (M \to A)	\ar[l]	\ar[u]
		 \end{tikzcd}
		\]
		be a homotopy pushout square in $s\Log\Ring^{\pre}$.
		Then there is an isomorphism in $\Ho(\Mod_{B'})$
		\[
		 B' \otimes^h_{B} \LL_{(N \to B)/(M \to A)} \cong \LL_{(N' \to B')/(M' \to A')}.
		\]
 \end{enumerate}
\end{proposition}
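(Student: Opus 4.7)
The plan is to adapt the classical argument of Quillen and Illusie from simplicial commutative rings to simplicial prelog rings, using the projective model structure on $s\Log\Ring^{\pre}$ recalled above together with the fact that $\Omega$ is a left Quillen functor on the relevant slice category (\cite{SSV16}, Lemma~4.6).

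For base change (ii), I would choose a cofibrant replacement $P_\bullet \overset{\sim}{\to} (N \to B)$ in $s\Log\Ring^{\pre}_{(M \to A) // (N \to B)}$. Since the indicated square is a homotopy pushout and the model structure is proper, the ordinary pushout $P_\bullet \sqcup_{(M \to A)} (M' \to A')$ is a cofibrant replacement of $(N' \to B')$ over $(M' \to A')$, and hence represents the target cotangent complex upon applying $\Omega$. On free prelog rings of the form $(S \to R[S])$ one has an explicit description of $\Omega^1$ as the free $R[S]$-module generated by $dS$ together with $R[S] \otimes_R \Omega^1_R$; this description implies that $\Omega$ commutes with pushouts along free generators. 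Applying it level by level yields the isomorphism in $\Ho(\Mod_{B'})$.

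For transitivity (i), I would take a cofibrant replacement $P_\bullet \overset{\sim}{\to} (N \to B)$ over $(M \to A)$ and then a cofibrant replacement $Q_\bullet \overset{\sim}{\to} (K \to C)$ of $(K \to C)$ as a $P_\bullet$-algebra. Composition of cofibrations makes $Q_\bullet$ cofibrant also over $(M \to A)$. Level by level, the right-exact transitivity sequence
\[
 \Omega^1_{P_n / (M \to A)} \otimes_{P_n} Q_n \to \Omega^1_{Q_n / (M \to A)} \to \Omega^1_{Q_n / P_n} \to 0
\]
of log K\"ahler differentials becomes split short exact, because $P_n \to Q_n$ is a free extension of prelog rings and the splitting can be taken naturally with respect to face and degeneracy maps. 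Tensoring with $C$ over $Q_\bullet$ then yields a short exact sequence of simplicial $C$-modules that, after identifying $\Omega^1_{P_\bullet/(M \to A)} \otimes_{P_\bullet} Q_\bullet \otimes_{Q_\bullet} C$ with $C \otimes^h_B \LL_{(N \to B)/(M \to A)}$ via the flatness of $\Omega^1_{P_n/(M \to A)}$ over $P_n$, is precisely the claimed homotopy cofiber sequence in $\Ho(\Mod_C)$.

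The main obstacle is the purely algebraic verification on the free prelog cells: pinning down $\Omega^1$ of $(S \to R[S])$ and showing both the base-change isomorphism and the splitting of the transitivity sequence at that free level, paying particular attention to the interaction between the monoid part and the ring part of a prelog ring (in contrast with the ordinary ring case, where only the latter is present). Once these computations on generating cells are settled, both assertions follow formally from the Quillen adjunction and left properness, which is exactly the route taken in Proposition~4.12 of \cite{SSV16}.
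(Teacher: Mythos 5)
The paper does not actually prove this proposition; it is quoted directly from \cite{SSV16}, Proposition~4.12, and your argument is essentially the standard Quillen--Illusie proof that this reference carries out (cofibrant replacement, left properness of the projective model structure, base change and the transitivity sequence for $\Omega^1$ checked on free prelog cells, and flatness of $\Omega^1$ of a cofibrant object to identify derived with underived tensor products). One cosmetic remark on (i): you do not need the levelwise splittings to be natural with respect to the face and degeneracy maps --- levelwise injectivity on the left, which follows from freeness of $\Omega^1_{Q_n/P_n}$ over $Q_n$, already makes the sequence of simplicial $C$-modules short exact and hence a cofiber sequence after passing to normalized chains.
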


In order to apply these results in our setting of discrete prelog rings it would be useful to know when the homotopy pushouts appearing in (i) and (ii) coincide with the ordinary pushout.
The homotopy pushout in (i) appearing in the cofiber sequence is taken in the homotopy category of $\Mod_{C}$.
Suppose that $C$ and $B$ are discrete.
Then it is well known that
\[
 C \otimes_{B} \LL_{(N \to B)/(M \to A)} \cong C \otimes^h_{B} \LL_{(N \to B)/(M \to A)}
\]
in case $C$ is flat over $B$.
In the base change setting for discrete prelog rings it turned out to be easier to prove the base change result from scratch instead of deducing it from the homotopy version \cref{transitivity_BC} (ii) for simplicial prelog rings.

\begin{lemma} \label{BC_cotangent}
 Let
 \[
  \begin{tikzcd}
  (N' \to B')			& (N \to B)	\ar[l]	\\
  (M' \to A')	\ar[u]	& (M \to A)	\ar[l]	\ar[u]
 \end{tikzcd}
 \]
 be a pushout square in $\Log\Ring^{\pre}$ which is a homotopy pushout square in $s\Log\Ring^{\pre}$.
 Then
 \[
  \LL_{(N' \to B')/(M' \to A')} \cong \LL_{(N \to B)/(M \to A)} \otimes_A A'.
 \]
\end{lemma}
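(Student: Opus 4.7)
The plan is to compute both cotangent complexes from the canonical free resolution of Equation~\eqref{canonical_resolution} and exploit the fact that both the functor $\Omega$ and the base change $-\otimes_{(M \to A)} (M' \to A')$ interact well on the free simplices. Concretely, let $P_\bullet \to (N \to B)$ be the canonical free resolution of $(N \to B)$ over $(M \to A)$. I would form the base change $P'_\bullet := P_\bullet \otimes_{(M \to A)} (M' \to A')$, identify $\Omega(P'_n)$ with $\Omega(P_n) \otimes_A A'$ in each simplicial degree, and then argue that $\Omega(P'_\bullet)$ represents $\LL_{(N' \to B')/(M' \to A')}$.

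For the degreewise identification, write $P_n = G(L_n,C_n)$ for a monoid $L_n$ and an $A$-algebra $C_n$. Since the forgetful functor $F \colon \Log\Ring^{\pre} \to \Mon \times \Ring$ preserves colimits (being both left and right Quillen), the pushout in $\Log\Ring^{\pre}$ is computed separately on the monoid and ring components, so $P'_n = G(L_n \coprod_M M',\, C_n \otimes_A A')$. The universal property of log K\"ahler differentials on free prelog rings then yields the standard base change identity
\[
 \Omega^1_{P'_n/(M' \to A')} \cong \Omega^1_{P_n/(M \to A)} \otimes_{C_n} (C_n \otimes_A A'),
\]
and tensoring down to $B' = B \otimes_A A'$ produces $\Omega(P'_n) \cong \Omega(P_n) \otimes_A A'$. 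These isomorphisms are manifestly compatible with the face and degeneracy operators, so they assemble into an isomorphism $\Omega(P'_\bullet) \cong \Omega(P_\bullet) \otimes_A A'$ of simplicial $B'$-modules.

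To identify the left-hand side with the cotangent complex of $(N' \to B')$, I would use that $-\otimes_{(M \to A)} (M' \to A')$ is left Quillen and therefore sends the cofibrant $P_\bullet$ to a cofibrant object weakly equivalent to the derived base change of $(N \to B)$; by the homotopy pushout hypothesis this coincides with the ordinary pushout $(N' \to B')$. Consequently, $P'_\bullet$ is a cofibrant replacement of $(N' \to B')$ over $(M' \to A')$, and $\Omega(P'_\bullet)$ represents $\LL_{(N' \to B')/(M' \to A')}$. Combined with the degreewise identification this yields the asserted isomorphism. The main obstacle is precisely this last step: verifying that the canonical free resolution from \eqref{canonical_resolution} is genuinely cofibrant in the relevant relative model structure on $s\Log\Ring^{\pre}_{(M \to A)//(N \to B)}$ and that the homotopy pushout hypothesis really transports a cofibrant replacement of $(N \to B)$ to one of $(N' \to B')$ under base change. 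Once this bookkeeping is in place, the remainder of the proof reduces to the elementary base change identity for K\"ahler differentials on free prelog rings.
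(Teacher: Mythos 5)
Your proposal is correct and follows essentially the same route as the paper: base-change a cofibrant simplicial resolution of $(N \to B)$, use the homotopy-pushout hypothesis to conclude that the result is still a resolution of $(N' \to B')$ over $(M' \to A')$, and then apply the degreewise base-change identity for log K\"ahler differentials. The paper works with an arbitrary simplicial resolution rather than the canonical free one, and treats the cofibrancy bookkeeping you flag as implicit, but the argument is the same.
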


\begin{proof}
 Let $(K \to P) \to (N \to B)$ be a free simplicial resolution in the category of simplicial $(M \to A)$-algebras.
 Then the induced map
 \[
  (K \to P) \otimes_{(M \to A)} (M' \to A') \to (N \to B) \otimes_{(M \to A)} (M' \to A') = (N' \to B')
 \]
 represents the map from the homotopy pushout to the naive pushout, hence is a weak equivalence.
 It is therefore a free simplicial resolution of $(N' \to B')$ in the category of simplicial $(M' \to A')$-algebras and we can use it to compute the cotangent complex of $(N' \to B')$ over $(M' \to A')$:
 \begin{align*}
  \LL_{(N' \to B')/(M' \to A')} &= \Omega^1_{(K \to P) \otimes_{(M \to A)} (M' \to A')/(M' \to A')} \otimes_{P \otimes_A A'} (B \otimes_A A')	\\
								&= (\Omega^1_{(K \to P)/(M \to A)} \otimes_P B) \otimes_A A'	\\
								&= \LL_{(N \to B)/(M \to A)} \otimes_A A'.
 \end{align*}
\end{proof}

\begin{lemma} \label{homotopy_pushout_iff}
 Let
 \[
  \begin{tikzcd}
  (N' \to B')			& (N \to B)	\ar[l]	\\
  (M' \to A')	\ar[u]	& (M \to A)	\ar[l]	\ar[u]
 \end{tikzcd}
 \]
 be a pushout square in $\Log\Ring^{\pre}$.
 It is a homotopy pushout square if and only if the two pushout squares
 \begin{equation} \label{pushout_monoids_rings}
  \begin{tikzcd}
   N'			& N	\ar[l]			& B'			& B	\ar[l]	\\
   M'	\ar[u]	& M	\ar[l]	\ar[u]	& A'	\ar[u]	& A	\ar[l]	\ar[u]
  \end{tikzcd}
 \end{equation}
 are homotopy pushout squares in $s\Mon$ and $s\Ring$, respectively.
\end{lemma}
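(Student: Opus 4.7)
The plan is to exploit three properties of the forgetful functor $F := \Forget^{s\Log\Ring^{\pre}}_{s\Mon \times s\Ring}$: weak equivalences in $s\Log\Ring^{\pre}$ are by definition those sent to (componentwise) weak equivalences under $F$; $F$ is a left Quillen functor and hence preserves cofibrations and trivial cofibrations; and $F$ preserves ordinary pushouts, because a pushout in $\Log\Ring^{\pre}$ is computed by separately forming the pushout of monoids and the pushout of rings. These three facts together mean that $F$ ``transports'' the construction of homotopy pushouts from $s\Log\Ring^{\pre}$ into $s\Mon \times s\Ring$.

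First I would compute the homotopy pushout in $s\Log\Ring^{\pre}$ of the given span by factoring the map $(M \to A) \to (N \to B)$ as a cofibration followed by a trivial fibration
\[
 (M \to A) \hookrightarrow (\tilde N \to \tilde B) \xrightarrow{\sim} (N \to B).
\]
Since the model structure on $s\Log\Ring^{\pre}$ is proper, the ordinary pushout
\[
 P := (\tilde N \to \tilde B) \sqcup_{(M \to A)} (M' \to A')
\]
represents the homotopy pushout, and the given square is a homotopy pushout precisely when the canonical comparison map $P \to (N' \to B')$ is a weak equivalence in $s\Log\Ring^{\pre}$. Next I would apply $F$: because $F$ preserves cofibrations, weak equivalences and pushouts, the image of $P$ is the pair $(\tilde N \sqcup_M M',\, \tilde B \otimes_A A')$, and by the same reasoning this represents the homotopy pushouts of the two squares in \cref{pushout_monoids_rings}.

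Putting the two steps together, the comparison map $P \to (N' \to B')$ is a weak equivalence in $s\Log\Ring^{\pre}$ if and only if its image under $F$ is, which by construction is the case if and only if the maps $\tilde N \sqcup_M M' \to N'$ in $s\Mon$ and $\tilde B \otimes_A A' \to B'$ in $s\Ring$ are both weak equivalences---which is precisely the condition that each of the squares in \cref{pushout_monoids_rings} be a homotopy pushout. I expect the main obstacle to be the bookkeeping in the first step: one needs that the factorization chosen in $s\Log\Ring^{\pre}$ descends under $F$ to compatible cofibrant replacements of $M \to N$ in $s\Mon$ and of $A \to B$ in $s\Ring$, but this is exactly the content of $F$ being left Quillen, so no further argument is required.
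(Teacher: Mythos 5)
Your proof is correct and follows essentially the same route as the paper: both arguments rest on the facts that $\Forget^{s\Log\Ring^{\pre}}_{s\Mon \times s\Ring}$ detects weak equivalences, preserves the (componentwise) ordinary pushouts, and, being left Quillen and weak-equivalence-preserving, preserves homotopy pushouts. The only difference is cosmetic: you realize the homotopy pushout explicitly via a cofibrant replacement of one leg together with (left) properness, whereas the paper compares against an abstract representative $(N'' \to B'')$.
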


\begin{proof}
 Let $(N'' \to B'')$ represent the homotopy pushout of $(M \to A) \to (M' \to A')$ and $(M \to A) \to (N \to B)$.
 We obtain a map $(N'' \to B'') \to (N' \to B')$.
 By the definition of the model structure on $s\Log\Ring^{\pre}$ it is a weak equivalence if and only if $N'' \to N'$ and $B'' \to B'$ are weak equivalences.
 The pushout in the category of prelog rings is compatible with the pushouts in the category of monoids and the category of rings:
 \[
  B' \cong A' \otimes_A B	\qquad \text{and} \qquad N' \cong M' \sqcup_M N,
 \]
 i.e., the diagrams~(\ref{pushout_monoids_rings}) are pushout squares.
 Moreover, as $\Forget_{s\Mon \times s\Ring}^{s\Log\Ring^{\pre}}$ is a left Quillen functor, it preserves homotopy colimits.
 Therefore, $B''$ and $N''$ represent the homotopy pushouts of
 \[
  \begin{tikzcd}
	& N					& \text{and}	&		& B	\\
  M	& M	\ar[u]	\ar[l]	&				& A'	& A, \ar[u]	\ar[l]
  \end{tikzcd}
 \]
 in $s\Mon$ and~$s\Ring$, respectively.
 We conclude that $(N'' \to B'') \to (N' \to B')$ is a weak equivalence if and only if both $(N'' \to N')$ and $(B'' \to B')$ are.
\end{proof}

\begin{corollary} \label{condition_homotopy_pushout}
 Let
 \begin{equation} \label{log_pushout_square}
  \begin{tikzcd}
   (N' \to B')			& (N \to B)	\ar[l]	\\
   (M' \to A')	\ar[u]	& (M \to A)	\ar[l]	\ar[u]
  \end{tikzcd}
 \end{equation}
 be a pushout square in $\Log\Ring^{\pre}$.
 Assume that either of the ring homomorphisms $A \to B$ or $A \to A'$ is flat
  and that either $M \to N$ or $M \to M'$ is an integral homomorphism of integral monoids.
 Then the square (\ref{log_pushout_square}) is a homotopy pushout square.
\end{corollary}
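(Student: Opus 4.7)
The strategy is to apply \cref{homotopy_pushout_iff}: we must verify that the induced pushout squares of rings and of monoids are homotopy pushouts. Once both are handled separately, the corollary follows immediately.

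For the ring square, the hypothesis that one of the maps $A \to B$ or $A \to A'$ is flat is exactly what is needed. Indeed, in the category of simplicial rings with the standard model structure, a pushout square whose underlying ring diagram has one flat leg is automatically a homotopy pushout, since the derived tensor product agrees with the ordinary tensor product in that case (all higher $\Tor_A^i(B,A') = 0$). So this side is standard and short.

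For the monoid square, the relevant fact is that an integral homomorphism $M \to N$ of integral monoids is flat in the sense that pushouts along it are well behaved; more precisely, if $M \to N$ is integral and $M \to M'$ is any homomorphism of integral monoids, then the pushout $M' \sqcup_M N$ is again integral and the higher Tor-monoid invariants vanish, so it computes the homotopy pushout in $s\Mon$. The argument I would give is to invoke the characterization of integral morphisms (for instance \cite{Ogus18}, Chapter~I) and appeal to the fact that they are flat; the functor $N \mapsto M' \sqcup_M N$ then preserves the relevant weak equivalences and the naive pushout coincides with the derived one. Symmetrically, if it is $M \to M'$ which is integral, one runs the same argument with the roles of $N$ and $M'$ swapped.

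Combining the two, \cref{homotopy_pushout_iff} gives the conclusion. The main obstacle is the monoid part: one has to know (or cite) that integral morphisms of integral monoids are flat and that flat morphisms produce homotopy pushouts in $s\Mon$. The ring part is routine, and the reduction via \cref{homotopy_pushout_iff} is purely formal.
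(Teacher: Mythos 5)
Your proposal follows the paper's proof exactly: reduce via \cref{homotopy_pushout_iff} to the separate ring and monoid pushout squares, handle the ring square by flatness (Tor-vanishing), and handle the monoid square by the fact that integral homomorphisms of integral monoids yield homotopy pushouts (the paper cites \cite{Kato89}, Proposition~4.1 for this last point, which is the precise reference your sketch is gesturing at).
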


\begin{proof}
 By \cref{homotopy_pushout_iff} we have to show that the two diagrams in (\ref{pushout_monoids_rings}) are homotopy pushout squares.
 For the diagram of rings this is well known.
 For the diagram of monoids this is \cite{Kato89}, Proposition~4.1.
\end{proof}

\begin{corollary} \label{localization_cotangent}
 Let
 \[
  (M \to A) \to (N \to B)
 \]
 be a homomorphism of prelog rings and $S \subseteq A$ a multiplicative subset.
 Then
 \[
  \LL_{(N \to S^{-1}B)/(M \to S^{-1}A)} \cong S^{-1}(\LL_{(N \to B)/(M \to A)}).
 \]
\end{corollary}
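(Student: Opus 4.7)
The plan is to realize the localization as a base change in the category of prelog rings and then invoke \cref{BC_cotangent}. Concretely, I would consider the diagram
\[
 \begin{tikzcd}
  (N \to S^{-1}B)		& (N \to B)	\ar[l]	\\
  (M \to S^{-1}A)	\ar[u]	& (M \to A)	\ar[l]	\ar[u]
 \end{tikzcd}
\]
and first check that this is genuinely a pushout square in $\Log\Ring^{\pre}$. This is routine: pushouts in prelog rings are computed componentwise, and one has $S^{-1}A \otimes_A B \cong S^{-1}B$ together with $M \sqcup_M N \cong N$.

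Next I would verify that the square is a homotopy pushout square. By \cref{homotopy_pushout_iff} this reduces to showing that the separate pushout squares of monoids and of rings are homotopy pushouts. The monoid square has the identity $M \to M$ as the left vertical map (and therefore also as the right vertical map), so it is trivially a homotopy pushout. The ring square is a homotopy pushout because $A \to S^{-1}A$ is flat, hence the derived and underived tensor products agree. Thus the hypotheses of \cref{BC_cotangent} are fulfilled, and we conclude
\[
 \LL_{(N \to S^{-1}B)/(M \to S^{-1}A)} \cong \LL_{(N \to B)/(M \to A)} \otimes_A S^{-1}A \cong S^{-1}\LL_{(N \to B)/(M \to A)},
\]
which is the claim. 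No real obstacle is expected; the only thing worth checking carefully is that the monoid component of the pushout really is $N$ (so that the monoid square is trivial) and that flatness of $A \to S^{-1}A$ suffices to make the ring square a homotopy pushout, which is standard.
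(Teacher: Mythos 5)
Your proof is correct and is essentially the argument the paper intends: the corollary is stated as an immediate consequence of the preceding results, namely realizing localization as the pushout along the flat map $A \to S^{-1}A$ with identity on monoids and then applying \cref{BC_cotangent}. If anything, your direct appeal to \cref{homotopy_pushout_iff} (rather than to \cref{condition_homotopy_pushout}) is slightly cleaner, since the identity $M \to M$ is trivially a homotopy pushout leg without needing $M$ to be an integral monoid.
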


We finished our treatment of the compatibility of the logarithmic cotangent complex with base change.
The rest of this section uses transitivity and base change to compute the log cotangent complex for certain well behaved prelog rings.

\begin{proposition} \label{cotangent_polynomial}
 Let~$M \to A$ be a prelog ring and $N$ a finitely generated free monoid.
 Then
 \[
  H_i(\LL_{(M \oplus N \to A[N])/(M \to A)})
 \]
 vanishes for $i \ge 1$ and is isomorphic to $N^{\gp} \otimes A[N]$ for $i = 0$.
\end{proposition}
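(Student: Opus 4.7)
The plan is to reduce to the universal case over $(0 \to \ZZ)$ (where $0$ denotes the trivial monoid) by base change, and then to exploit cofibrancy to identify the log cotangent complex with the ordinary log K\"ahler differentials in degree $0$.

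First, I would observe that $(M \oplus N \to A[N])$ is the pushout in $\Log\Ring^{\pre}$ of $(M \to A) \leftarrow (0 \to \ZZ) \to (N \to \ZZ[N])$. The ring map $\ZZ \to \ZZ[N]$ is flat, and the monoid map $0 \to N$ is an integral homomorphism of integral monoids (both assertions are immediate from the definitions), so \cref{condition_homotopy_pushout} shows that this pushout is a homotopy pushout. Applying \cref{BC_cotangent} yields
\[
 \LL_{(M \oplus N \to A[N])/(M \to A)} \cong \LL_{(N \to \ZZ[N])/(0 \to \ZZ)} \otimes_{\ZZ} A,
\]
which reduces the statement to the case $(M \to A) = (0 \to \ZZ)$.

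Next, I would argue that the constant simplicial prelog ring $(N \to \ZZ[N])$ is cofibrant in $s\Log\Ring^{\pre}$ over $(0 \to \ZZ)$. Indeed, $(N \to \ZZ[N]) = G(N, \ZZ)$ for the left Quillen functor $G = \Free^{\Mon \times \Ring}_{\Log\Ring^{\pre}}$. The pair $(N, \ZZ)$ is cofibrant in $s\Mon \times s\Ring$ over $(0, \ZZ)$: on the ring side $\ZZ$ is the initial object, and on the monoid side the free commutative monoid $N$ on finitely many generators is cofibrant because the free functor $s\Set \to s\Mon$ is a left Quillen functor (its right adjoint, the forgetful functor, preserves fibrations and weak equivalences by definition of the model structure on $s\Mon$) and every simplicial set is cofibrant. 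Since left Quillen functors preserve cofibrant objects, $(N \to \ZZ[N])$ is cofibrant over $(0 \to \ZZ)$, and hence $\LL_{(N \to \ZZ[N])/(0 \to \ZZ)}$ is represented in $\Ho(\Mod_{\ZZ[N]})$ by the log K\"ahler differentials $\Omega^1_{(N \to \ZZ[N])/(0 \to \ZZ)}$ concentrated in degree $0$.

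Finally, I would compute these log K\"ahler differentials explicitly. Writing $N = \bigoplus_{j=1}^r \N e_j$ and $x_j := \alpha(e_j) \in \ZZ[N]$, the module $\Omega^1_{(N \to \ZZ[N])/(0 \to \ZZ)}$ is the quotient of $\Omega^1_{\ZZ[N]/\ZZ} \oplus (\ZZ[N] \otimes_\ZZ N^{\gp})$ by the relations $dx_j = x_j \cdot \delta(e_j)$; these relations eliminate the generators $dx_j$ in favor of $\delta(e_j)$, so what remains is the free $\ZZ[N]$-module on $\delta(e_1),\ldots,\delta(e_r)$, which is canonically $\ZZ[N] \otimes_\ZZ N^{\gp}$. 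Combined with the base change formula this gives $H_0 \cong A[N] \otimes_\ZZ N^{\gp}$ and vanishing in positive degrees, as claimed. The main subtle point is the cofibrancy argument in the second step, which relies on tracking how the left Quillen functor $G$ interacts with the product model structure; the rest is a routine application of the base change result \cref{BC_cotangent} and the definition of log K\"ahler differentials.
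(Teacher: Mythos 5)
Your proof is correct, but it takes a genuinely different route from the paper's for the key step. Both arguments perform the same reduction to the universal case: you and the paper both use the pushout square over $(0 \to \ZZ)$, \cref{condition_homotopy_pushout} (flatness of $\ZZ \to \ZZ[N]$, integrality of $0 \to N$), and \cref{BC_cotangent} to get $\LL_{(M \oplus N \to A[N])/(M \to A)} \cong \LL_{(N \to \ZZ[N])/(0 \to \ZZ)} \otimes_\ZZ A$. Where you diverge is in identifying $\LL_{(N \to \ZZ[N])/(0 \to \ZZ)}$ with $\Omega^1$ concentrated in degree $0$. The paper first passes to the logification $(N^a \to \ZZ[N])$ over $(\{\pm 1\} \to \ZZ)$ via Olsson's Theorem~8.16, then invokes the comparison of Gabber's complex with Olsson's for log flat morphisms (Olsson, Corollary~8.29) and the vanishing of Olsson's complex in positive degrees for log smooth maps. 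You instead observe that $(N \to \ZZ[N]) = G(N,\ZZ)$ is already cofibrant over $(0 \to \ZZ)$, because $G = \Free^{\Mon \times \Ring}_{\Log\Ring^{\pre}}$ is left Quillen and $(N,\ZZ)$ is cofibrant in $s\Mon \times s\Ring$ (the free monoid on a finite set being cofibrant since the free functor $s\Set \to s\Mon$ is left Quillen and every simplicial set is cofibrant); hence the left derived functor $L\Omega$ agrees with $\Omega$ on this object. Your route is more self-contained — it stays entirely within the Gabber/simplicial framework the paper has already set up and needs no comparison with Olsson's construction — at the cost of a careful bookkeeping of cofibrancy through the product and under-category model structures, which you carry out correctly. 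The explicit presentation of $\Omega^1_{(N \to \ZZ[N])/(0 \to \ZZ)}$ as the free module on $\delta(e_1),\ldots,\delta(e_r)$, with the relations $dx_j = x_j\,\delta(e_j)$ eliminating the $dx_j$, is also correct and matches the paper's identification with a free module of rank equal to that of $N^{\gp}$.
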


\begin{proof}
 By \cite{Ols05}, Theorem~8.16 we know that taking the associated log ring does not change the cotangent complex:
 \[
  \LL_{(N^a \to \ZZ[N])/(\{\pm 1\} \to \ZZ)} \cong \LL_{(N \to \ZZ[N])/(0 \to \ZZ)}.
 \]
 Since $(\{\pm 1\} \to \ZZ)$ is (obviously) log flat over $\ZZ$ with trivial log structure, Gabber's cotangent complex $\LL_{(N^a \to \ZZ[N])/(\{\pm 1\} \to \ZZ)}$ coincides with Olsson's (see \cite{Ols05}, Corollary 8.29), which we denote by $\LL^{\textit{Ols}}_{(N^a \to \ZZ[N])/(\{\pm 1\} \to \ZZ)}$.
 But
 \[
  \LL^{\textit{Ols}}_{(N^a \to \ZZ[N])/(\{\pm 1\} \to \ZZ)} \cong \Omega^1_{(N^a \to \ZZ[N])/(\{\pm 1\} \to \ZZ)}
 \]
 as $(\{\pm 1\} \to \ZZ) \to (N^a \to \ZZ[N])$ is log \replaced{smooth }{flat} (\cite{Ols05}, 1.1 (iii)) and
 \[
  \Omega^1_{(N^a \to \ZZ[N])/(\{\pm 1\} \to \ZZ)} \cong \Hom_\Mon(N,\ZZ[N]).
 \]
 Now consider the pushout square
 \[
  \begin{tikzcd}
   (M \oplus N \to A[N]			& (N \to \ZZ[N])	\ar[l]			\\
   (M \to A)			\ar[u]	& (\{1\} \to \ZZ).		\ar[l]	\ar[u]
  \end{tikzcd}
 \]
 The ring homomorphism $\ZZ \to \ZZ[N]$ is flat and the monoid $N$ is integral.
 Hence, by \cref{condition_homotopy_pushout}, the above square is a homotopy pushout square.
 Applying \cref{BC_cotangent} yields an isomorphism
 \[
  \LL_{(M \oplus N \to A[N])/(M \to A)} \cong \LL_{(N \to \ZZ[N])/(\{1\} \to \ZZ)} \otimes_\ZZ A.
 \]
 From this and the above description of $\LL_{(N \to \ZZ[N])/(\{1\} \to \ZZ)}$ we obtain the result.
\end{proof}

\begin{proposition} \label{cotangent_regular_ideal}
 In the situation of \cref{cotangent_polynomial} let~$I$ be a regular ideal of $A[N]$.
 Then
 \[
  \LL_{(M \oplus N \to A[N]/I)/(M \to A)} \cong (I/I^2 \overset{-d}{\longrightarrow} \Omega^1_{(M \oplus N \to A[N])/(M \to A)} \otimes_{A[N]} A[N]/I),
 \]
 where $I/I^2$ is placed in degree $-1$ and~$d$ is induced from the differential.
\end{proposition}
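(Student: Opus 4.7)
The plan is to apply the transitivity triangle from \cref{transitivity_BC}(i) to the composition
\[
(M \to A) \to (M \oplus N \to A[N]) \to (M \oplus N \to A[N]/I),
\]
which will produce a distinguished triangle
\[
A[N]/I \otimes^h_{A[N]} \LL_{(M \oplus N \to A[N])/(M \to A)} \to \LL_{(M \oplus N \to A[N]/I)/(M \to A)} \to \LL_{(M \oplus N \to A[N]/I)/(M \oplus N \to A[N])}.
\]
The first term is computed by \cref{cotangent_polynomial}: it equals the free $A[N]$-module $N^{\gp} \otimes A[N] = \Omega^1_{(M \oplus N \to A[N])/(M \to A)}$ concentrated in degree $0$. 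Since this module is flat over $A[N]$, the derived tensor product collapses to the ordinary tensor product $\Omega^1_{(M \oplus N \to A[N])/(M \to A)} \otimes_{A[N]} A[N]/I$ sitting in degree~$0$.

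For the third term I would argue that since the log structure $M \oplus N$ is the same on source and target (the morphism is strict), Gabber's log cotangent complex agrees with the ordinary one, so
\[
\LL_{(M \oplus N \to A[N]/I)/(M \oplus N \to A[N])} \cong \LL_{(A[N]/I)/A[N]}.
\]
This can be checked using the canonical free resolution~(\ref{canonical_resolution}): when $M \oplus N$ is fixed, the differentials of the resolution reduce to Kähler differentials of the underlying ring resolution tensored with the constant copy of~$M^{\gp} \oplus N^{\gp}$, and the latter contribution drops out upon tensoring with $A[N]/I$ over the polynomial-subring summand. Since $I$ is a regular ideal, the classical result (Illusie, via a Koszul resolution of $A[N]/I$ as a simplicial $A[N]$-algebra) identifies this with the complex $I/I^2$ concentrated in degree~$-1$.

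Combining these, the transitivity triangle takes the form
\[
\Omega^1_{(M \oplus N \to A[N])/(M \to A)} \otimes_{A[N]} A[N]/I \to \LL_{(M \oplus N \to A[N]/I)/(M \to A)} \to (I/I^2)[1],
\]
so the middle cotangent complex is quasi-isomorphic to a two-term complex with $I/I^2$ in degree $-1$ and $\Omega^1_{(M \oplus N \to A[N])/(M \to A)} \otimes_{A[N]} A[N]/I$ in degree $0$. To pin down the differential I would trace the connecting map in the triangle through the explicit Koszul resolution: the generators of $I/I^2$ are sent to the classes of $df$ for $f \in I$, and the sign convention inherited from the shift $(I/I^2)[1]$ turns this into the map $-d$ asserted in the statement.

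The main obstacle I expect is making the reduction to the classical cotangent complex for the strict morphism $(M \oplus N \to A[N]) \to (M \oplus N \to A[N]/I)$ rigorous within Gabber's framework: one has to verify that the canonical free simplicial resolution of $(M \oplus N \to A[N]/I)$ relative to $(M \oplus N \to A[N])$ computes the same object as the ring-level resolution, despite the fact that the free prelog functor $G = \Free^{\Mon \times \Ring}_{\Log\Ring^{\pre}}$ introduces extra monoid generators at each level. Once this strictness compatibility is established, everything else is either a direct application of \cref{transitivity_BC} and \cref{cotangent_polynomial} or the standard classical computation for regular immersions.
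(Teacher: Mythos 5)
Your proposal follows essentially the same route as the paper: transitivity applied to $(M \to A) \to (M \oplus N \to A[N]) \to (M \oplus N \to A[N]/I)$, \cref{cotangent_polynomial} to identify the first term (with the derived tensor product collapsing by freeness), reduction of the strict morphism $(M \oplus N \to A[N]) \to (M \oplus N \to A[N]/I)$ to the classical cotangent complex, Illusie's computation $\LL_{(A[N]/I)/A[N]} \cong I/I^2[1]$ for the regular ideal, and a sign analysis identifying the connecting map with $-d$. The one step you flag as the main obstacle --- that Gabber's log cotangent complex of a morphism which is the identity on monoids agrees with the classical one --- is precisely \cite{Ols05}, Lemma~8.17, which the paper simply cites (and it likewise cites \cite{Ill71}, III, Proposition~1.2.9 for the sign of the differential rather than tracing the Koszul resolution), so no further work is needed there.
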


\begin{proof}
 The proof is the same as for Olsson's cotangent complex (\cite{Ols05}, Lemma~6.9):
 By \cref{transitivity_BC} (i) we have a homotopy cofiber sequence
 \[
  \LL_{(M \oplus N \to A[N])/(M \to A)} \otimes^h_{A[N]} A[N]/I \to \LL_{(M \oplus N \to A[N]/I)/(M \to A)} \to \LL_{(M \oplus N \to A[N]/I)/(M \oplus N \to A[N])}.
 \]
 \cref{cotangent_polynomial} gives us
 \[
  \LL_{(M \oplus N \to A[N])/(M \to A)} \otimes^h_{A[N]} A[N]/I \cong \Omega^1_{(M \oplus N \to A[N])/(M \to A)} \otimes_{A[N]} A[N]/I.
 \]
 Moreover,
 \[
  \LL_{(M \oplus N \to A[N]/I)/(M \oplus N \to A[N])} \cong \LL_{(A[N]/I)/A[N]} \cong I/I^2[1].
 \]
 We have the first isomorphism because the map on monoids is the identity (\cite{Ols05}, Lemma~8.17) and the second one is a classical result for the cotangent complex of rings (\cite{Ill71}, III, Proposition~3.2.4).

 It remains to show that the resulting map $I/I^2 \to \Omega^1_{(M \oplus N \to A[N])/(M \to A)}$ is given by the negative of the differential.
 By functoriality we have a factorization
 \[
  I/I^2 \to \Omega^1_{A[N]/A} \to \Omega^1_{(M \oplus N \to A[N])/(M \to A)}.
 \]
 The first map is the negative of the differential by \cite{Ill71}, III Proposition~1.2.9 and the second map is the canonical one.
\end{proof}

\begin{corollary} \label{cotangent_free_monoid}
 Let~$A$ be a ring and~$M$ a finitely generated free commutative submonoid of~$A^{\times}$.
 Then $\LL_{(M \to A)/(\{1\} \to A)}$ is concentrated in degree zero.
\end{corollary}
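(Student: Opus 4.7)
The plan is to realize $(M \to A)$ as a quotient of the ``universal'' prelog ring $(M \to A[M])$ by a regular ideal, and then combine \cref{cotangent_polynomial} and \cref{cotangent_regular_ideal}. Choose a free basis $m_1, \ldots, m_n$ of $M$, so that $A[M] \cong A[T_1, \ldots, T_n]$ with $T_i$ corresponding to $m_i \in M$. Since $M \subseteq A^{\times}$, there is a surjection of $A$-algebras $A[M] \to A$ sending $T_i$ to $m_i$, whose kernel is $I = (T_1 - m_1, \ldots, T_n - m_n)$. This sequence is regular: successively modding out by $T_i - m_i$ simply substitutes $T_i = m_i$ and returns the polynomial ring in the remaining variables, in which $T_{i+1} - m_{i+1}$ is a non-zerodivisor. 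Under this identification, $(M \to A)$ coincides with $(\{0\} \oplus M \to A[M]/I)$ in the notation of \cref{cotangent_regular_ideal}.

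Next, I would apply \cref{cotangent_regular_ideal} with the base prelog ring $(\{0\} \to A)$ and with the ``$N$'' there equal to our $M$. This yields
\[
 \LL_{(M \to A)/(\{0\} \to A)} \cong \bigl[ I/I^2 \xrightarrow{-d} \Omega^1_{(M \to A[M])/(\{0\} \to A)} \otimes_{A[M]} A \bigr],
\]
with $I/I^2$ placed in degree $-1$. By \cref{cotangent_polynomial} the right-hand term is a free $A$-module of rank $n$ with basis $\{ d\log(m_i) \}$, while regularity of the sequence makes the left-hand term free of rank $n$ with basis $\{ \overline{T_i - m_i} \}$. The corollary then reduces to showing that $-d$ is an isomorphism between these two free $A$-modules.

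The remaining verification is a direct computation in the log K\"ahler differentials. Since $m_i \in A$ and the base carries the trivial log structure, one has $d(m_i) = 0$; and the defining relation yields $d(T_i) = d(e^{m_i}) = T_i \cdot d\log(m_i)$, which modulo $I$ becomes $m_i \cdot d\log(m_i)$. Consequently $-d$ sends $\overline{T_i - m_i}$ to $-m_i \cdot d\log(m_i)$, and because $m_i \in A^{\times}$ this is a basis of $M^{\gp} \otimes_{\ZZ} A$. Hence the differential is an isomorphism and the two-term complex is acyclic, so in fact $\LL_{(M \to A)/(\{0\} \to A)} \cong 0$, which is \emph{a fortiori} concentrated in degree zero.

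The only delicate point is this last computation: one must carefully unravel the definition of the log differential $d(T_i)$ and keep in mind that $d\log(m_i)$ is a formal symbol distinct from the (vanishing) ordinary differential of $m_i \in A$. Otherwise everything reduces to a direct appeal to the two preceding propositions together with the well-known regularity of $(T_1 - m_1, \ldots, T_n - m_n)$ in a polynomial ring.
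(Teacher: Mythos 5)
Your proof is correct and follows essentially the same route as the paper's: present $(M \to A)$ as $(M \to A[M]/I)$ with $I = (T_1 - m_1,\ldots,T_n - m_n)$ regular, apply \cref{cotangent_regular_ideal} together with \cref{cotangent_polynomial}, and check that $-d$ sends $\overline{T_i - m_i}$ to $-m_i\, d\log(m_i)$. The only (harmless) difference is that you push the computation one step further to conclude that $-d$ is an isomorphism and hence $\LL_{(M \to A)/(\{0\} \to A)} \cong 0$, whereas the paper stops at injectivity, which already gives the stated concentration in degree zero.
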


\begin{proof}
 We choose generators $m_1,\ldots,m_r$ of~$M$.
 This defines an isomorphism of $A[M]$ with $A[T_1,\ldots,T_r]$.
 Let~$I$ be the ideal of $A[T_1,\ldots,T_r]$ generated by $T_i - m_i$ for $i = 1,\ldots,r$.
 This is clearly a regular ideal.
 By \cref{cotangent_regular_ideal} we have
 \[
  \LL_{(M \to A)/(\{1\} \to A)} \cong (I/I^2 \overset{-d}{\longrightarrow} \Omega^1_{(M \to A[M])/(\{1\} \to A)} \otimes_{A[M]} A).
 \]
 We have a natural identification of $I/I^2$ with the free $A$-module with generators $(T_i-m_i)$.
 Moreover, by \cref{cotangent_polynomial}, $\Omega^1_{(M \to A[M])/(\{1\} \to A)} \otimes_{A[M]} A)$ is isomorphic to $M^{\gp} \otimes A$.
 The differential~$d$ maps $(T_i -m_i)$ to $dT_i = T_i (dT_i/T_i)$ (corresponding to $m_i \otimes m_i \in M^{\gp} \otimes A$).
 This map is injective.
\end{proof}

Finally, we will need that the cotangent complex is compatible with filtered colimits:

\begin{proposition} \label{colimits_cotangent}
 Let $(M \to A) = \colim_{i \in I} (M_i \to A_i)$ and $(N \to B) = \colim_{i \in I} (N_i \to B_i)$  be filtered colimits in the category of prelog rings.
 Suppose we are given compatible homomorphisms $(M_i \to A_i) \to (N_i \to B_i)$.
 Then there is a natural isomorphism
 \[
  \LL_{(N \to B)/(M \to A)} \cong \colim_{i \in I} \LL_{(N_i \to B_i)/(M_i \to A_i)}.
 \]
\end{proposition}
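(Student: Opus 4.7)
The plan is to argue directly from the canonical free resolution (\ref{canonical_resolution}), exploiting that every functor involved is a left adjoint and hence commutes with filtered colimits. Write $F = \Forget_{\Mon \times \Ring}^{\Log\Ring^{\pre}}$ and $G = \Free_{\Log\Ring^{\pre}}^{\Mon \times \Ring}$ as in the text; applied to $(N \to B)$, one obtains the simplicial prelog ring $P_\bullet \to (N \to B)$, and $\LL_{(N \to B)/(M \to A)}$ is represented by $\Omega(P_\bullet)$, where $\Omega(L \to C) = \Omega^1_{(L \to C)/(M \to A)} \otimes_C B$. We do the same for each index $i \in I$, obtaining $P_{\bullet,i} \to (N_i \to B_i)$ together with compatible maps $P_{\bullet,i} \to P_\bullet$.

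First, I would verify that the resolutions themselves pass through the colimit, i.e.\ $P_\bullet \cong \colim_i P_{\bullet,i}$. The forgetful functor $F$ preserves all colimits since colimits in $\Log\Ring^{\pre}$ are computed componentwise on the underlying monoid and ring (the prelog structure map is determined by the universal property). The free functor $G$, being a left adjoint, preserves all colimits. Hence each composition $(GF)^n$ preserves filtered colimits, and taking levelwise colimits gives $P_\bullet \cong \colim_i P_{\bullet,i}$. Second, I would verify that the differential functor $\Omega$ commutes with filtered colimits. By its universal property (cf.\ \cite{Ogus18}, Chapter~IV, Proposition~1.1.2) the log K\"ahler differentials functor is a left adjoint, so it preserves all colimits; the tensor product $-\otimes_C B$ does as well (noting that in our situation $B$ and $B_i$ are themselves filtered colimits, and one can replace $\otimes_C B$ by $\otimes_C C \to \otimes_{C_i} B_i$ compatibly via a standard change-of-rings argument).

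Combining these two observations yields $\Omega(P_\bullet) \cong \colim_i \Omega(P_{\bullet,i})$ as simplicial $B$-modules. Since the category of $B$-modules is an $AB5$ abelian category, filtered colimits of simplicial modules are exact on the level of the Dold--Kan associated chain complex and commute with taking homology. Therefore
\[
 \LL_{(N \to B)/(M \to A)} \cong \colim_i \LL_{(N_i \to B_i)/(M_i \to A_i)}
\]
in $\Ho(\Mod_B)$, which is the desired identification.

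The main technical subtlety is the handling of the base-ring tensor in the definition of $\Omega$: the individual $\Omega^1_{P_{\bullet,i}/(M_i \to A_i)}$ are modules over $P_{\bullet,i}$, not yet over $B_i$, and compatibility of the colimit with the successive base changes to $B_i$ and then to $B$ has to be tracked carefully. This is resolved by writing the tensor product as the left adjoint to restriction of scalars and observing that filtered colimits of the indexing rings make the whole construction a filtered colimit of left adjoints, which therefore commutes with any further colimit. Once this bookkeeping is set up, the rest of the argument is formal.
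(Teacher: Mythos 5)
Your proposal is correct and follows essentially the same route as the paper, which proves the statement by observing that the functors $F$ and $G$ in the canonical resolution~(\ref{canonical_resolution}) commute with filtered colimits and that the formation of log differentials does as well. You have simply spelled out the details (levelwise identification of the resolutions, commutation of $\Omega$ and of homology with filtered colimits) that the paper leaves implicit.
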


\begin{proof}
 The functors~$F$ and~$G$ in the canonical resolution~(\ref{canonical_resolution}) commute with filtered colimits and so does the formation of log differentials.
\end{proof}

\section{Unramified and tame extensions}	\label{section_unramified_tame}

For a valued field~$K$ we will adopt the following notation.
The valuation of an element~$x$ in~$K$ is written $|x|_K$ or only $|x|$ when it does not cause confusion.
We denote the valuation ring of~$K$ by~$K^+$ and the value group of the valuation by $\Gamma_K$.
We endow~$K^+$ with the \emph{total log structure} $(K^+ \setminus \{0\} \to K^+)$.
For an extension $L|K$ of valued fields we define
\[
 \LL^{\log}_{L/K} := \LL_{(L^+ \setminus \{0\} \to L^+)/(K^+ \setminus \{0\} \to K^+)}.
\]

Remember that a finite extension $L|K$ of valued fields is \emph{unramified} if $L^{\sh} = K^{\sh}$ (strict henselization).
It is \emph{tamely ramified} (or tame for short)  if $[L^{\sh}:K^{\sh}]$ is prime to the residue characteristic of~$K^+$.
In this case $[\Gamma_L:\Gamma_K] = [L^{\sh}:K^{\sh}]$.
An algebraic extension $L|K$ of valued fields is tame if all its finite subextensions are tame.

\begin{lemma} \label{cotangent_unramified}
 Let $L|K$ be unramified.
 Then $\LL^{\log}_{L/K} \cong 0$.
 In particular, $\Omega^{\log}_{L/K} = 0$.
\end{lemma}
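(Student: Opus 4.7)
The plan is to reduce first to the case that $L|K$ is finite. Any algebraic extension is a filtered colimit of its finite subextensions, all of which are again unramified, so by \cref{colimits_cotangent} the vanishing for finite unramified extensions implies the general case.

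For finite unramified $L|K$, I would pick $\alpha \in L^+$ whose reduction $\bar\alpha \in \kappa_L$ generates the residue extension $\kappa_L|\kappa_K$. Unramifiedness gives $[\kappa_L:\kappa_K] = [L:K]$, which forces $L = K(\alpha)$, and the minimal polynomial $f \in K^+[T]$ of $\alpha$ reduces to the (necessarily separable) minimal polynomial of $\bar\alpha$. Hence $A := K^+[\alpha] \cong K^+[T]/(f)$ is finite \'etale over $K^+$. Because $\Gamma_L = \Gamma_K$, one checks that the set of elements of $A$ of positive valuation is exactly the maximal ideal $\m_K A$, so $L^+ = A_{\m_K A}$ is a localization of $A$.

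The core of the argument is to factor
\[
 (K^+ \setminus \{0\} \to K^+) \to (K^+ \setminus \{0\} \to A) \to (K^+ \setminus \{0\} \to L^+)
\]
in the category of prelog rings, keeping the monoid $K^+ \setminus \{0\}$ fixed throughout. Transitivity (\cref{transitivity_BC} (i)) reduces the vanishing of the total log cotangent complex to that of the two stages. Since the monoid map is the identity, at each stage the log cotangent complex reduces to the ordinary cotangent complex (the observation already used in the proof of \cref{cotangent_regular_ideal}); these are $\LL_{A/K^+} = 0$ (as $A$ is \'etale over $K^+$) and $\LL_{L^+/A} = 0$ (as $L^+$ is a localization of $A$). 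This gives $\LL_{(K^+ \setminus \{0\} \to L^+)/(K^+ \setminus \{0\} \to K^+)} \simeq 0$.

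To finish, I would identify this with $\LL^{\log}_{L/K}$. Since $\Gamma_L = \Gamma_K$, any nonzero $y \in L^+$ can be written as $y = u \cdot x$ with $u \in (L^+)^{\times}$ and $x \in K^+ \setminus \{0\}$ of the same valuation; this shows that the logification of $(K^+ \setminus \{0\} \to L^+)$ is precisely $(L^+ \setminus \{0\} \to L^+)$. By \cite{Ols05}, Theorem~8.16 (already invoked in the proof of \cref{cotangent_polynomial}), logification does not affect the log cotangent complex, so $\LL^{\log}_{L/K} \simeq 0$; in particular $\Omega^{\log}_{L/K} = H_0(\LL^{\log}_{L/K}) = 0$. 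The main obstacle I foresee is exactly this last identification: unramifiedness (in the form $\Gamma_L = \Gamma_K$) is essential here, since without it the prelog structure pulled back from $K^+$ is strictly coarser than the total log structure on $L^+$ and the transitivity calculation above no longer computes $\LL^{\log}_{L/K}$.
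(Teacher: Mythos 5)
Your overall route is the same as the paper's at the decisive points: both arguments use $\Gamma_L=\Gamma_K$ to identify the total log structure on $L^+$ with the logification of $(K^+\setminus\{0\}\to L^+)$, then invoke \cite{Ols05}, Theorem~8.16 (logification invariance) and Lemma~8.17 (identity on monoids) to reduce the statement to the vanishing of the ordinary cotangent complex $\LL_{L^+/K^+}$. Where you differ is in proving that vanishing: the paper simply cites \cite{GR03}, Theorem~6.3.32 together with the classical vanishing of $\Omega_{L^+/K^+}$ for unramified extensions, while you try to exhibit $L^+$ explicitly as a localization of a finite \'etale $K^+$-algebra. (Your preliminary colimit reduction to the finite case is fine and is in fact needed if one argues this way, since the lemma is applied in \cref{cotangent_tame} to the infinite extension $K^{\sh}|K$; the paper's citation covers arbitrary extensions of valuation rings directly.)

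The gap is in your construction of $A$. The claim that unramifiedness gives $[\kappa_L:\kappa_K]=[L:K]$ is false: unramified means $L^{\sh}=K^{\sh}$, i.e.\ $L$ embeds into $K^{\sh}$ over $K$, and when $K^+$ is not henselian this does not force the residue degree to equal $[L:K]$. Concretely, take $K=\Q$ with the $7$-adic valuation and $L=\Q(\sqrt{2})$ with either extension of the valuation: $T^2-2$ splits modulo $7$, so $L\subseteq K^{\sh}$ and $L|K$ is unramified, yet $\kappa_L=\kappa_K=\F_7$ while $[L:K]=2$. Hence an $\alpha$ lifting a generator of $\kappa_L|\kappa_K$ need not generate $L$ over $K$, the polynomial $f$ need not have degree $[L:K]$, and $\m_K A$ need not be prime in $A$ or coincide with the set of elements of positive valuation (in the example the correct $A=K^+[\sqrt 2]$ has $A/\m_K A\cong\F_7\times\F_7$ and $L^+$ is the localization at one of two maximal ideals). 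The intended conclusion --- that $L^+$ is essentially \'etale over $K^+$, whence $\LL_{A/K^+}\simeq 0$ and $\LL_{L^+/A}\simeq 0$ by transitivity --- is morally right and can be extracted from the structure theory of the strict henselization, but it requires a different argument from the one you give; the paper deliberately avoids this by quoting Gabber--Ramero.
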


\begin{proof}
 Since $L|K$ is unramified, $\Gamma_L = \Gamma_K$, so the total log structure of $L^+$ is the logification of $(K^+ \setminus \{0\} \to L^+)$.
 We can thus compute the logarithmic cotangent complex as follows:
 \[
  \LL^{\log}_{L/K} \cong \LL_{(K^+ \setminus \{0\} \to L^+)/(K^+ \setminus \{0\} \to K^+)} \cong \LL_{L^+/K^+} \cong 0.
 \]
 The left hand isomorphism is due to \cite{Ols05}, Theorem~8.16, the middle one to \cite{Ols05}, Lemma~8.17, and the right hand one to \cite{GR03}, Theorem~6.3.32 and the well known fact that the differentials vanish for unramified extensions.
\end{proof}

\begin{proposition} \label{cotangent_tame}
 For any tame extension $L|K$ of valued fields the logarithmic cotangent complex is trivial: $\LL^{\log}_{L/K} \cong 0$.
 In particular, $\Omega^{\log}_{L/K} \cong 0$.
\end{proposition}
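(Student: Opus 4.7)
The plan is a double reduction followed by an explicit computation in a totally tamely ramified Kummer extension. First, using that any tame algebraic extension $L|K$ is the filtered colimit of its finite tame subextensions, \cref{colimits_cotangent} reduces the problem to finite tame $L|K$. For such an $L|K$, I would fix an embedding into an algebraic closure and let $K' = L \cap K^{\nr}$ be the maximal unramified subextension; then $K'|K$ is unramified and $L|K'$ is totally tamely ramified of degree $n$ prime to the residue characteristic. The transitivity cofibre sequence \cref{transitivity_BC}(i), combined with $\LL^{\log}_{K'/K} \cong 0$ from \cref{cotangent_unramified}, reduces the claim to $\LL^{\log}_{L/K'} \cong 0$. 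Replacing $K$ by $K'$, it suffices to treat $L|K$ totally tamely ramified of degree $n$ with $n$ invertible in $K^+$.

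In that situation $\Gamma_L/\Gamma_K$ is cyclic of order $n$, so I would pick $t \in L^+$ whose valuation generates this quotient. After adjusting by a unit (possible because $n$ is invertible), one arranges $t^n = \pi \in K^+$, giving $L = K(t)$ and $L^+ = K^+[t]/(t^n - \pi)$. Equip $L^+$ with the prelog structure making $t$ a chart. The relation $t^n = \pi$ then yields $n \cdot d\log t = d\log \pi$, which vanishes in the relative log differentials since $\pi \in M_K$; invertibility of $n$ forces $d\log t = 0$, and then $dt = t \cdot d\log t = 0$, whence $\Omega^{\log}_{L/K} = 0$. To upgrade this heuristic to the full log cotangent complex, I would apply \cref{cotangent_regular_ideal} to the regular ideal $I = (s^n - \pi) \subset K^+[s]$ with the log polynomial structure, use \cref{cotangent_polynomial} and \cref{BC_cotangent} to describe the resulting two-term complex, and exploit again that $n$ is invertible to produce acyclicity.

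The hardest part will be reconciling this computation with the \emph{total} log structure $(M_L \to L^+)$ appearing in the definition of $\LL^{\log}_{L/K}$. In higher-rank valuations, $M_L$ is strictly larger than the logification of the small prelog ring generated by $M_K$ and $t$, so the invariance of the log cotangent complex under logification (\cite{Ols05}, Theorem~8.16, as used in \cref{cotangent_unramified}) does not apply verbatim. I would circumvent this by localizing at height-one primes of $L^+$ via \cref{localization_cotangent}, where the two log structures coincide, and by arguing that vanishing of $\LL^{\log}$ can be verified after such localizations. Once this identification is in place, the computation in the previous paragraph completes the proof.
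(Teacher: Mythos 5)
There is a genuine gap at the heart of the computation: the claim that after normalizing $t^n=\pi\in K^+$ one gets $L^+=K^+[t]/(t^n-\pi)$. This presentation of the valuation ring is correct for discrete valuations but fails in general (already for rank-one non-discrete $\Gamma_K$): since $\Gamma_K$ can be dense, there are $b\in K$ with $|b|>1$ but $|bt|\le 1$, and such elements $bt$ lie in $L^+$ without lying in $K^+[t]$. So $L^+$ is not monogenic over $K^+$, and the two-term complex you build from the regular ideal $(s^n-\pi)$ computes the cotangent complex of the wrong ring. This is precisely the point where the paper has to work harder: after reducing (via transitivity, \cref{cotangent_unramified}, \cref{colimits_cotangent}, and a prime-degree d\'evissage over a \emph{strictly henselian} base) to $L=K[a^{1/\ell}]$, it writes $L^+$ as the filtered colimit of the subalgebras $R_b=K^+[ba^{1/\ell}]$ with $|ba^{1/\ell}|<1$, equips each $R_b$ with an explicit prelog structure $M_b$ recording the relation $(b^\ell a,0)\sim(1,\ell)$, checks that the total log structure of $L^+$ is the logification of $\colim_b(M_b\to R_b)$, and only then runs the pushout and log-\'etaleness arguments on each $(M_b\to R_b)$ before passing to the colimit with \cref{colimits_cotangent}. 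Your reduction to the maximal unramified subextension $K'=L\cap K^{\nr}$ also leaves $K'$ non-henselian, so the existence of the Kummer generator $t$ with $t^n\in K^+$ is not guaranteed at that stage; the paper's reduction to $K^{\sh}$ supplies the needed roots of unity.

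Your proposed repair of the log-structure mismatch by localizing at height-one primes of $L^+$ does not close the gap either: $L^+$ is local, and vanishing of $\LL^{\log}_{L/K}$ (a complex of $L^+$-modules) cannot be detected on localizations at non-maximal primes --- a module supported at the maximal ideal dies under all such localizations. The correct mechanism is the one above: the total log structure \emph{is} the logification of the colimit of the small chart-generated prelog structures $M_b$, so \cite{Ols05}, Theorem~8.16 together with \cref{colimits_cotangent} applies directly, with no localization needed. The heuristic $n\,d\log t=d\log\pi$ forcing $\Omega^{\log}_{L/K}=0$ is the right intuition and does survive in the paper's argument (via \cite{Ogus18}, Chapter~IV, Proposition~3.1.3 applied to the log \'etale map $(\N\to K^+)\to(\N\to R_b)$), but the scaffolding around it needs to be rebuilt as indicated.
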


\begin{proof}
 Using \cref{cotangent_unramified} and transitivity (\cref{transitivity_BC}~(i)) for the extensions in the diagram
  \[
  \begin{tikzcd}
							& L^{\sh}	\ar[dl,dash] \ar[dr,dash]	\\
   K^{\sh}	\ar[dr,dash]	&										& L			\ar[dl,dash]				\\
							& K
  \end{tikzcd}
 \]
 we reduce to the case where~$K$ is strictly henselian.
 Moreover, since the logarithmic cotangent complex is compatible with filtered colimits (\cref{colimits_cotangent}), we can reduce to the case of a finite extension.
 Remember that the Galois group of a finite tame extension of a strictly henselian valued field is abelian, so in particular solvable.
 Hence, we can decompose the extension $L|K$ into a chain of subextensions of prime degree:
 \[
  K = L_0 \subseteq L_1 \subseteq \ldots \subseteq L_n = L
 \]
 such that $[L_{i+1}:L_i]$ is a prime number.
 Transitivity (\cref{transitivity_BC}~(i)) allows us to treat each extension separately.
 We may thus assume $[L:K]$ is a prime number~$\ell$ (prime to the residue characteristic as $L|K$ is tame).

 We have $L = K[a^{1/\ell}]$ for some $a \in K$ with $|a| < 1$.
 The valuation ring~$L^+$ is the filtered colimit of its subalgebras $R_b = K^+[ba^{1/\ell}]$ with $b \in K$ such that $|ba^{1/\ell}| < 1$ (see the proof of \cite{GR03}, Proposition.3.13~(i)).
 We equip~$R_b$ with the prelog structure
 \[
  M_b := [(K^+ \setminus \{0\} \oplus \N)/\sim] \longrightarrow R_b,
 \]
 where the equivalence relation is generated by $(b^{\ell}a,0) \sim (1,\ell)$ (note that the first component is written multiplicatively and the second one additively) and $(x,r) \in M_b$ is mapped to $x(ba^{1/\ell})^r \in R_b$.
 We claim that the total log structure of~$L^+$ is the logification of the colimit of the prelog rings $(M_b \to R_b)$.
 Since~$M_b$ and~$R_b$ are naturally contained in~$L^+$ and we already know that $L^+ = \colim_b R_b$, this amounts to checking that every element $y \in L^+ \setminus \{0\}$ can be written in the form $y = ux(ba^{1/\ell})^r$ for $x \in K^+$, $r \in \N$, $b \in K$ such that $|ba^{1/\ell}|_L < 1$, and~$u$ a unit of~$L^+$.
 We choose $b \in K$ such that $y \in R_b$.
 Then $|y|_L = |x(ba^{1/\ell})^r|_L$ for some~$x$ and~$r$ as above.
 Setting $u = y x^{-1}(ba^{1/\ell})^{-r}$, the claim follows.

 Using that logification does not change the cotangent complex (\cite{Ols05}, Theorem~8.16) and \cref{colimits_cotangent} this reduces us to showing that $\LL_{(M_b \to R_b)/(K^+ \setminus \{0\} \to K^+)}$ is concentrated in degree~$0$.

 We now consider the following pushout square of prelog rings:
 \[
  \begin{tikzcd}
   (M_b \to R_b)							& (\N \to R_b)	\ar[l]			\\
   (K^+ \setminus \{0\} \to K^+)	\ar[u]	& (\N \to K^+)	\ar[l]	\ar[u],
  \end{tikzcd}
 \]
 where the prelog structures on the right hand side are given by $r \mapsto (ba^{1/\ell})^r$ and $r \mapsto (b^{\ell}a)^r$, the right hand vertical monoid homomorphism is $r \mapsto \ell r$, the upper horizontal one $r \mapsto (1,r)$, and the lower one $r \mapsto (b^{\ell}a)^r$.
 The identity on~$K^+$ is (obviously) flat and the right hand vertical map of monoids is integral.
 Hence, the diagram is also a homotopy pushout (see \cref{condition_homotopy_pushout}).
 We conclude that
 \[
  \LL_{(M_b \to R_b)/(K^+ \setminus \{0\} \to K^+)} \cong \LL_{(\N \to R_b)/(\N \to K^+)}.
 \]
 By \cite{Ols05}, Theorem~8.16
 \[
  \LL_{(\N \to K^+)/(\N \to R_b)} \cong \LL_{(\N \to K^+)^a/(\N \to R_b)^a}.
 \]
 The logification of $(\N \to K^+) \to (\N \to R_b)$ is a log \'etale, integral homomorphism of fine, integral log rings.
 Its cotangent complex is thus isomorphic to Olsson's cotangent complex (\cite{Ols05}, Corollary~8.29), which in turn is concentrated in degree zero by log smoothness (\cite{Ols05}, (1.1 (iii))).
 Moreover, it vanishes in degree zero by \cite{Ogus18}, Chapter~IV, Proposition~3.1.3.
\end{proof}

We would like to extend our results to tame extensions of local Huber pairs.
Recall from \cite{HueAd}, Definition~6.1, that a Huber pair $(A,A^+)$ is local if~$A$ is a local ring with maximal ideal~$\m_A$ and~$A^+$ is the preimage in~$A$ of a valuation ring of~$A/\m_A$.
Moreover, remember that a local map
\[
 (A,A^+) \longrightarrow (B,B^+)
\]
of local Huber pairs is \emph{tame} if $A \to B$ is essentially étale (i.e., $B$ is a localization of an étale $A$-algebra) and the extension of residue fields
\[
 (k_A,k_A^+) \longrightarrow (k_B,k_B^+)
\]
is tame.

Given a Huber pair $(A,A^+)$, we endow~$A^+$ with the log structure
\[
 A^+ \cap A^\times \longrightarrow A^+
\]
If $\Spec A \to \Spec A^+$ is an open immersion, it corresponds to the compactifying log structure on $\Spec A^+$ defined by the open subset $\Spec A$.
In case $(A,A^+)$ is local, this log structure can also be described as
\[
 A^+ \setminus \m_A \longrightarrow A^+
\]
and we call it the \emph{total log structure} on~$A^+$.
For a valued field $(k,k^+)$ we recover the total log structure on~$k^+$ we considered earlier.

\begin{corollary} \label{tame_local_Huber_pair_cotangent}
 Let
 \[
  (A,A^+) \longrightarrow (B,B^+)
 \]
 be a tame extension of noetherian local Huber pairs.
 Then
 \[
  \LL^{\log}_{(B,B^+)/(A,A^+)} \cong 0.
 \]
 In particular, $\Omega^{\log}_{(B,B^+)/(A,A^+)} \cong 0$.
\end{corollary}

\begin{proof}
 We consider the short exact sequence
 \[
  0 \longrightarrow \m_B \longrightarrow B^+ \longrightarrow k_B^+ \longrightarrow 0.
 \]
 of~$B^+$-modules.
 Thensoring with~$\LL^{\log}_{(B,B^+)/(A,A^+)}$ we obtain an exact triangle
 \begin{equation} \label{exact_triangle}
  \LL^{\log}_{(B,B^+)/(A,A^+)} \otimes^h_{B^+} \m_B \longrightarrow \LL^{\log}_{(B,B^+)/(A,A^+)} \longrightarrow \LL^{\log}_{(B,B^+)/(A,A^+)} \otimes^h_{B^+} k_B^+ \overset{+1}{\longrightarrow} \ldots
 \end{equation}
 Since~$\m_B$ is a $B$-module, we can rewrite the leftmost part as
 \[
  \LL^{\log}_{(B,B^+)/(A,A^+)} \otimes^h_{B^+} \m_B \cong \LL^{\log}_{(B,B^+)/(A,A^+)} \otimes^h_{B^+} B \otimes^h_B \m_B.
 \]
 We know that~$A$ is a localization of~$A^+$ by the multiplicative subset $S:= A^+ \setminus \m_A$.
 Then
 \[
  (A,A) = (A,S^{-1} A^+) \longrightarrow (B,S^{-1} B^+)
 \]
 is a tame local homomorphism of local Huber pairs.
 The corresponding extension of residue fields should be a finite extension of the trivially valued field~$k_A$.
 Therefore $S^{-1} B^+/\m_B$ is a field and this already implies $B = S^{-1} B^+$.
 Now we can use \cref{localization_cotangent} to compute
 \[
  \LL^{\log}_{(B,B^+)/(A,A^+)} \otimes^h_{B^+} B = S^{-1} \LL^{\log}_{(B,B^+)/(A,A^+)} \cong \LL^{\log}_{(B^+ \cap B^\times \to B)/(A^+ \cap A^\times \to A)}.
 \]
 The logification of $(B^+ \cap B^\times \to B)$ is the trivial log structure and similarly for~$A$.
 Hence, \cite{Ols05}, Theorem~8.16 and Lemma~8.17 imply
 \[
  \LL^{\log}_{(B^+ \cap B^\times \to B)/(A^+ \cap A^\times \to A)} \cong \LL_{B/A},
 \]
 which is trivial as $B/A$ is essentially étale.
 Putting the pieces together we obtain
 \[
  \LL^{\log}_{(B,B^+)/(A,A^+)} \otimes^h_{B^+} \m_B \cong 0.
 \]
 In order to compute
 \[
  \LL^{\log}_{(B,B^+)/(A,A^+)} \otimes^h_{B^+} k_B^+,
 \]
 we need to check that the diagram
 \[
  \begin{tikzcd}
   (k_B^+ \cap k_B^\times \to k_B^+)			& (B^+ \cap B^\times \to B^+)	\ar[l]	\\
   (k_A^+ \cap k_A^\times \to k_A^+)	\ar[u]	& (A^+ \cap A^\times \to A^+)	\ar[l]	\ar[u]
  \end{tikzcd}
 \]
 is a homotopy pushout square.
 The two diagrams
 \[
  \begin{tikzcd}
   k_B^+ \cap k_B^\times 			& B^+ \cap B^\times	\ar[l]			& k_B^+			& B^+	\ar[l]			\\
   k_A^+ \cap k_A^\times	\ar[u]	& A^+ \cap A^\times	\ar[l]	\ar[u]	& k_A^+	\ar[u]	& A^+	\ar[l]	\ar[u]
  \end{tikzcd}
 \]
 are pushout diagrams as $A \to B$ is unramified, whence $\m_A B^+ = \m_A B = \m_B$.
 We want to check that they are also homotopy pushout diagrams.
 For the left hand diagram this is the case because $A^+ \cap A^\times \to k_A^+ \cap k_A^\times$ is a quotient by the submonoid $1+\m_A \subseteq A^+ \cap A^\times$, which implies that it is an integral morphism (see \cite{KatoIntegralLog}, Lemma~2.2~(3)).
 The right hand diagram is a homotopy pushout as $A \to B$ is flat, which implies that $A^+ \to B^+$ is flat (see \cite{HueAd}, Proposition~11.8).
 \cref{condition_homotopy_pushout} implies that the above diagram of prelog rings is a homotopy pushout diagram.
 We can now use \cref{transitivity_BC} to compute the base change of $\LL^{\log}_{(B,B^+)/(A,A^+)}$ to $k_B^+$:
 \[
  \LL^{\log}_{(B,B^+)/(A,A^+)} \otimes^h_{B^+} k_B^+ \cong \LL^{\log}_{(k_B,k_B^+)/(k_A,k_A^+)}.
 \]
 We have seen in \cref{cotangent_tame} that the logarithmic cotangent complex of $k_B/k_A$ is trivial.
 Going back to the exact triangle (\ref{exact_triangle}) we see that two out of three terms vanish, implying that $\LL^{\log}_{(B,B^+)/(A,A^+)} \cong 0$.
\end{proof}

\section{Logarithmic differentials on adic spaces} \label{section_log_diff_adic}

All Huber pairs in this section will be endowed with the discrete topology and all adic spaces will be discretely ringed, i.e., locally isomorphic to a Huber pair with the discrete topology.
For a morphism of local Huber pairs $(A,A^+) \to (B,B^+)$, we define
\[
 \Omega^{n,\log}_{(B,B^+)/(A,A^+)} := \Omega^n_{(B^+ \cap B^{\times} \to B^+)/(A^+ \cap A^{\times} \to A^+)}.
\]
If $n = 1$, we omit~$n$ and just write $\Omega^{\log}_{(B,B^+)/(A,A^+)}$.
For this section we fix a field~$k$ and a valuation ring~$k^+$ of~$k$.
We assume that one of the following properties is satisfied:
\begin{itemize}
 \item the residue characteristic of~$k^+$ is~$0$,
 \item $k$ is algebraically closed,
 \item $k = k^+$ is perfect.
\end{itemize}

For a Huber pair $(A,A^+)$ over $(k,k^+)$ we use the short notation $\Omega^n_{A^+}$ for $\Omega^n_{A^+/k^+}$ and $\Omega^{n,\log}_{(A,A^+)}$ for $\Omega^{n,\log}_{(A,A^+)/(k,k^+)}$.

\subsection{Logarithmic differentials on local Huber pairs}

The following is a reformulation of results of \cite{GR03}, \S~6.5.

\begin{proposition} \label{Omega_valuation_tf}
 Let $(K,K^+)$ be any extension of valued fields of $(k,k^+)$.
 Then $\Omega^{n,\log}_{(K,K^+)}$ and $\Omega^n_{K^+}$ are torsion free for all $n \ge 1$.
\end{proposition}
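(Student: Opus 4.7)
The plan is to reduce to the case $n=1$ and then handle the ordinary and logarithmic versions in parallel, appealing to \cite{GR03}, \S~6.5 for the hard input.

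First, the reduction to $n=1$. Over the valuation ring $K^+$, a module is torsion-free if and only if it is flat, and exterior powers of flat modules are flat. Since
\[
 \Omega^n_{K^+} \cong \wedge^n_{K^+} \Omega^1_{K^+} \qquad \text{and} \qquad \Omega^{n,\log}_{(K,K^+)} \cong \wedge^n_{K^+} \Omega^{\log}_{(K,K^+)},
\]
the general case follows from the cases $n=1$.

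For $\Omega^1_{K^+/k^+}$, the statement is a direct citation of \cite{GR03}, \S~6.5: under any of our three hypotheses on $(k,k^+)$, the differential module of an arbitrary extension of valuation rings is $K^+$-flat (hence torsion-free). The three cases correspond to the distinct geometric situations in which the almost-flatness statements of loc.\,cit.\ become honest flatness, namely: all residue field extensions are separable (residue characteristic zero or $k=k^+$ perfect), or no arithmetic obstruction arises (algebraic closure of~$k$). I would write $K^+$ as a filtered colimit of $k^+$-subalgebras that are themselves valuation rings obtained from $k^+$ by successive elementary extensions (immediate, algebraic-tame, and, in residue characteristic zero, purely ramified); by \cref{colimits_cotangent} we are reduced to each such step, where the computation is carried out in \cite{GR03}.

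For $\Omega^{\log}_{(K,K^+)/(k,k^+)}$ I would argue as follows. After inverting the monoid $K^{+,*}$, the total log structure becomes trivial, so base change along $K^+ \to K$ gives a canonical $K^+$-linear map
\[
 \Omega^{\log}_{(K,K^+)/(k,k^+)} \longrightarrow \Omega^{\log}_{(K,K^+)/(k,k^+)} \otimes_{K^+} K \cong \Omega^1_{K/k},
\]
and showing this map is injective is equivalent to torsion-freeness. To produce injectivity, combine the transitivity sequence from \cref{transitivity_BC}~(i) applied to $(k^{+,*} \to k^+) \to (k^{+,*} \to K^+) \to (K^{+,*} \to K^+)$ with the exact description of log differentials of the total log structure of a valuation extension given in \cite{GR03}, \S~6.5: the cokernel of $\Omega^1_{K^+/k^+} \to \Omega^{\log}_{(K,K^+)/(k,k^+)}$ is identified with $\bigl(\Gamma_K/\Gamma_k\bigr) \otimes_{\Z} K^+$ (or a sub-quotient thereof), which is $K^+$-free since $\Gamma_K/\Gamma_k$ is torsion-free (quotient of ordered abelian groups by a convex subgroup). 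Thus both the sub and the quotient are $K^+$-flat, and $\Omega^{\log}_{(K,K^+)}$ is flat as well.

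The main obstacle is the second half: correctly identifying the cokernel of $\Omega^1_{K^+/k^+} \hookrightarrow \Omega^{\log}_{(K,K^+)/(k,k^+)}$ and checking that it is indeed $K^+$-torsion-free. Everything else is a formal exterior-power reduction or a colimit bookkeeping. The argument leans essentially on the three hypotheses in exactly the places that guarantee one can avoid wild inseparable phenomena which would otherwise produce $p$-torsion in the differentials.
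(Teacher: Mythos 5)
Your reduction to $n=1$ via exterior powers and your treatment of $\Omega^1_{K^+/k^+}$ match the paper, which cites \cite{GR03}, Theorem~6.5.15 and Corollary~6.5.21 for the latter and the fact that exterior powers of torsion-free modules over a valuation ring are torsion-free for the former. The gap is in your argument for $\Omega^{\log}_{(K,K^+)}$. You propose to present it as an extension of $\bigl(\Gamma_K/\Gamma_k\bigr)\otimes_{\Z}K^+$ by $\Omega^1_{K^+/k^+}$ and to conclude flatness from flatness of sub and quotient. This fails for two reasons. First, $\Gamma_k$ is in general \emph{not} a convex subgroup of $\Gamma_K$, and $\Gamma_K/\Gamma_k$ is frequently torsion (any ramified extension, e.g.\ $\Gamma_K=\tfrac12\Z\supset\Z=\Gamma_k$). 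Second, and more seriously, the cokernel of $\Omega^1_{K^+/k^+}\to\Omega^{\log}_{(K,K^+)}$ is not $\bigl(\Gamma_K/\Gamma_k\bigr)\otimes_{\Z}K^+$ but a proper quotient of it by the extra relations $m\otimes \bar v(m)$ for $m\in\m_{K^+}$, and this quotient genuinely has torsion. Concretely, for $k$ trivially valued and $K=k(t)$ with the $t$-adic valuation one computes $\Omega^{\log}_{(K,K^+)}\cong K^+\,\tfrac{dt}{t}$, the image of $\Omega^1_{K^+/k}$ is $K^+\,dt = \m_{K^+}\cdot\tfrac{dt}{t}$, and the cokernel is $K^+/\m_{K^+}$, a torsion module. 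So the dévissage you propose cannot yield torsion-freeness of the middle term; the conclusion is true but not for the reason given.

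The paper avoids this by using a different presentation: by \cite{Ogus18}, Chapter~IV, Proposition~2.3.1, $\Omega^{\log}_{(K,K^+)}$ is the cokernel of
\[
 \rho_{K^+/k^+}\colon \Omega^{\log}_{k^+/\Z}\otimes_{k^+}K^+\longrightarrow \Omega^{\log}_{K^+/\Z},
\]
i.e.\ the map between \emph{absolute} log differentials of the total log structures, and Gabber--Ramero's \S~6.5 (Lemma~6.5.16, Theorem~6.5.20, Corollary~6.5.21) proves directly that this cokernel is torsion-free under each of the three hypotheses on $(k,k^+)$. This is where the real content lives; it is not recoverable from the transitivity sequence over $k^+$ alone. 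To repair your proof you should replace the extension argument by this citation (or reprove the relevant parts of \cite{GR03}, \S~6.5).
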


\begin{proof}
 The statement about $\Omega_{K^+}$ is \cite{GR03}, Theorem~6.5.15 and Corollary~6.5.21.
 In \cite{GR03}, \S~6.5 Gabber and Ramero examine the natural homomorphism
 \[
  \rho_{K^+/k^+} : \Omega_{k^+/\ZZ}^{\log} \otimes_{k^+} K^+ \to \Omega_{K^+/\ZZ}^{\log}.
 \]
 The cokernel of $\rho_{K^+/k^+}$ is isomorphic to $\Omega^{\log}_{(K,K^+)}$ (\cite{Ogus18}, Chapter~IV, Proposition~2.3.1).
 Therefore the result for $n = 1$ follows from \cite{GR03}, Lemma~6.5.16 if the residue characteristic of~$k^+$ is~$0$, from Theorem~6.5.20 if~$k$ is algebraically closed, and from Corollary~6.5.21 if $k=k^+$ is perfect.
 The general case ($n >1$) follows as well as over a valuation ring exterior products of torsion free modules are torsion free (\cite{HKK17}).
\end{proof}

\begin{corollary} \label{Omega_injective}
 Let $(A,A^+)$ be a local Huber pair over~$(k,k^+)$.
 Then the natural homomorphism
 \[
  \varphi: \Omega^{n,\log}_{(A,A^+)} \longrightarrow \Omega^{n,\log}_{(A,A^+)} \otimes_{A^+} A \cong \Omega^n_{A/k}
 \]
 is injective.
 In particular, $\Omega^{\log}_{(A,A^+)}$ is torsion free if $\Omega^n_{A/k}$ is torsion free.
\end{corollary}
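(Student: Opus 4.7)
The claim reduces immediately to a torsion-freeness statement. Since $(A,A^+)$ is local Huber, $\m_A \subseteq A^+$ and $A = S^{-1}A^+$ for the multiplicative set $S := A^+ \setminus \m_A = A^+ \cap A^{\times}$. Thus the natural map $\Omega^{n,\log}_{(A,A^+)} \to \Omega^{n,\log}_{(A,A^+)} \otimes_{A^+} A$ is nothing but $S$-localization, and its kernel is the $S$-torsion submodule; injectivity of~$\varphi$ is therefore equivalent to $\Omega^{n,\log}_{(A,A^+)}$ being $S$-torsion-free, and the ``in particular'' clause is immediate from that equivalence. The identification of the target with $\Omega^n_{A/k}$ can be extracted from the transitivity triangle for $(k,k^+) \to (A,A^+) \to (A,A)$, because $\LL^{\log}_{(A,A)/(A,A^+)}$ is trivial: the passage from $(A,A^+)$ to $(A,A)$ combines a flat localization of the ring (making $\LL_{A/A^+}$ trivial) with a logification on the monoid side (which does not affect the cotangent complex by \cite{Ols05}, Theorem~8.16). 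Taking $H_0$ and using $A^+$-flatness of~$A$ yields the asserted isomorphism.

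For the torsion-freeness I would first reduce to $n = 1$ as in the proof of \cref{Omega_valuation_tf}, invoking \cite{HKK17} to handle the exterior-power step. For $n = 1$, the plan is to transfer the statement to the residue valued pair $(K,K^+) := (A/\m_A,\, A^+/\m_A)$, where \cref{Omega_valuation_tf} supplies torsion-freeness of $\Omega^{1,\log}_{(K,K^+)}$ over~$K^+$. Any $S$-torsion element $\omega \in \Omega^{1,\log}_{(A,A^+)}$ with $s\omega = 0$ reduces modulo $\m_A$ to an element $\bar\omega \in \Omega^{1,\log}_{(K,K^+)}$ annihilated by the nonzero scalar $\bar s \in K^+$, so $\bar\omega = 0$ by torsion-freeness at the residue level.

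The remaining work is to upgrade $\bar\omega = 0$ to $\omega = 0$. This requires the reduction map $\Omega^{1,\log}_{(A,A^+)} \otimes_{A^+} K^+ \to \Omega^{1,\log}_{(K,K^+)}$ to be injective, together with an inductive argument excluding $S$-torsion from the $\m_A$-adic filtration of $\Omega^{1,\log}_{(A,A^+)}$. The injectivity point is analysed via the transitivity triangle for $(k,k^+) \to (A,A^+) \to (K,K^+)$, where the standing hypotheses on $(k,k^+)$ (residue characteristic zero, $k$ algebraically closed, or $k = k^+$ perfect) ensure that the relevant algebraic extensions appearing in the comparison are tame, making \cref{cotangent_tame} available to control $\LL^{\log}_{(K,K^+)/(A,A^+)}$. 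The main obstacle I anticipate is the second bookkeeping step: iteratively ruling out $S$-torsion inside $\m_A \cdot \Omega^{1,\log}_{(A,A^+)}$. I expect this to be handled by exploiting the pullback presentation $A^+ = A \times_K K^+$, which should allow one to lift vanishing relations from $K^+$ back to~$A^+$ in an $S$-compatible way, thereby forcing the putative torsion element to vanish.
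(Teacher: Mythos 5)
Your setup and the first half of the argument match the paper's proof: injectivity of $\varphi$ is indeed equivalent to $S$-torsion-freeness for $S = A^+ \setminus \m_A$, and reducing a relation $s\omega = 0$ modulo $\m_A$ and invoking \cref{Omega_valuation_tf} for the residue valued field $(K,K^+)$ to conclude $\omega \in \m_A\,\Omega^{n,\log}_{(A,A^+)}$ is exactly what the paper does. But at the decisive point you leave a genuine gap: you announce that one must now ``iteratively rule out $S$-torsion inside $\m_A \cdot \Omega^{1,\log}_{(A,A^+)}$'' via the $\m_A$-adic filtration and the pullback presentation $A^+ = A \times_K K^+$, and you do not carry this out. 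A naive iteration of your first step would at best show $\omega \in \bigcap_r \m_A^r\,\Omega^{n,\log}_{(A,A^+)}$, which is not known to vanish, so the proof as proposed does not close. The paper finishes in one line with an observation you are missing: $\m_A$ is an ideal of $A$ (not merely of $A^+$), so the $A^+$-submodule $\m_A\,\Omega^{n,\log}_{(A,A^+)}$ carries a natural $A$-module structure; since $s \in S$ is a unit of $A$, it acts invertibly on this submodule, and $s\omega = 0$ together with $\omega \in \m_A\,\Omega^{n,\log}_{(A,A^+)}$ forces $\omega = 0$ immediately. No induction, no filtration, and no appeal to the fibre-product presentation of $A^+$ is needed.

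Two smaller points. First, your proposed reduction to $n=1$ via exterior powers is both unnecessary and unjustified here: the argument in \cref{Omega_valuation_tf} uses that exterior products of torsion-free modules over a \emph{valuation ring} are torsion-free, and $A^+$ is not a valuation ring; the paper instead runs the whole argument uniformly in $n$, using \cref{Omega_valuation_tf} only for $\Omega^{n,\log}_{(K,K^+)}$. Second, you are right that one needs the map $\Omega^{n,\log}_{(A,A^+)} \otimes_{A^+} K^+ \to \Omega^{n,\log}_{(K,K^+)}$ to be injective for the reduction step to yield $\omega \in \m_A\,\Omega^{n,\log}_{(A,A^+)}$; the paper treats this as a base-change identification rather than routing it through tameness and \cref{cotangent_tame}, which is a detour that does not obviously apply to the residue extension in question.
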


\begin{proof}
 We denote by~$\m$ the maximal ideal of~$A$ and set $(K,K^+) = (A/\m,A^+/\m)$.
 Note first that $A$ is the localization of~$A^+$ by the multiplicative subset $S = A^+ \setminus \m$.
 In order to show that $\varphi$ is injective, we need to show that $\Omega^{n,\log}_{(A,A^+)}$ is $S$-torsion free.
 Let $s \in S$ and $\omega \in \Omega^{n,\log}_{(A,A^+)}$ such that $s\omega = 0$.
 By \cref{Omega_valuation_tf} we know that $\Omega^{n,\log}_{(A,A^+)} \otimes_A^+ A^+/\m \cong \Omega^{n,\log}_{(K,K^+)}$ is torsion free.
 We conclude that $\omega \in \m \Omega^{n,\log}_{(A,A^+)}$.
 Since~$\m$ is an ideal of~$A$, the action of~$A^+$ on $\m \Omega^{n,\log}_{(A,A^+)}$ extends to~$A$.
 But~$s$ is a unit in~$A$, so $s\omega = 0$ implies $\omega = 0$.
\end{proof}

If $(A,A^+)$ is the localization of a smooth Huber pair over $(k,k^+)$, $\Omega^n_{A/k}$ is finitely generated and free.
We do not expect an analogous statement for logarithmic differentials.
However, we have the following result.
%
%
%
%
%

\begin{proposition} \label{local_torsion_free}
 Let $(A,A^+)$ be a local $(k,k^+)$-algebra such that $A$ is the localization of a smooth $k$-algebra.
 Then $\Omega^n_{A^+}$ and $\Omega_{(A,A^+)}^{n,\log}$ are flat $A^+$-modules.
\end{proposition}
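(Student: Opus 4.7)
The plan is to reduce to the case $n = 1$ and then derive flatness over $A^+$ from analogous statements on the two factors of the fiber-product decomposition $A^+ = A \times_K K^+$, where $K = A/\m_A$ is the residue field and $K^+ = A^+/\m_A$ is the induced valuation ring of $K$.

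The reduction to $n = 1$ uses that $\Omega^n = \Lambda^n \Omega^1$ in both the K\"ahler and the logarithmic setting and that over any commutative ring exterior powers of flat modules are flat: by Lazard every flat module is a filtered colimit of finite free modules, and $\Lambda^n$ commutes with filtered colimits while preserving freeness. So it suffices to show that $M := \Omega^1_{A^+}$ and $M := \Omega^{\log}_{(A,A^+)}$ are flat over $A^+$. For either choice of $M$, the two relevant base changes are tractable. Via the flat localization $A^+ \to A = S^{-1}A^+$ with $S = A^+ \setminus \m_A$, compatibility of (log) differentials with localization (\cref{localization_cotangent}) identifies $M \otimes_{A^+} A$ with $\Omega^1_{A/k^+}$ (the source log structure becomes trivial on $A$, since its elements are units there); by smoothness of $A$ over $k$ and the hypothesis on $(k,k^+)$, the fundamental sequence for $k^+ \to k \to A$ splits and exhibits $\Omega^1_{A/k^+}$ as a free $A$-module. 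Via the quotient $A^+ \twoheadrightarrow K^+$, base change produces $\Omega^1_{K^+}$ or $\Omega^{\log}_{(K,K^+)}$, which by \cref{Omega_valuation_tf} are torsion-free and therefore flat over the valuation ring $K^+$.

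The main obstacle is gluing these into flatness over $A^+$ itself. The key tool is the short exact sequence of $A^+$-modules $0 \to \m_A \to A^+ \to K^+ \to 0$, together with the observation that $\m_A$ is an ideal both of $A^+$ and of $A$, so $\m_A \otimes_{A^+} A = \m_A$ and hence $M \otimes_{A^+} \m_A \cong (M \otimes_{A^+} A) \otimes_A \m_A$. Tensoring the short exact sequence with $M$ and combining with the injection $M \hookrightarrow M \otimes_{A^+} A$ from \cref{Omega_injective} (and a parallel injectivity argument in the K\"ahler case) together with the flatness of $M \otimes_{A^+} A$ over $A$, one obtains $\mathrm{Tor}_1^{A^+}(M, K^+) = 0$. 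Promoting this to $\mathrm{Tor}_1^{A^+}(M, N) = 0$ for arbitrary finitely generated $A^+$-modules $N$ is the technical heart of the argument: one filters $N$ by $\m_A N$ and $N/\m_A N$, invokes flatness of $M$ on the $A$-piece and on the $K^+$-piece, and checks the compatibility at the common residue field $K = A \otimes_{A^+} K^+$ in the spirit of Milnor patching for modules over fiber products.
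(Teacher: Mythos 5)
Your proposal follows essentially the same route as the paper: both reduce the statement to the flatness of $M \otimes_{A^+} A \cong \Omega^n_{A/k}$ over $A$ (free, by smoothness) together with the torsion-freeness of $M \otimes_{A^+} K^+$ over the valuation ring $K^+$ (\cref{Omega_valuation_tf}), and then glue these over the fiber product $A^+ = A \times_K K^+$. The only difference is that the paper delegates the gluing step to the ready-made flatness criterion of \cite{HueAd}, Proposition~10.7, whereas you sketch a proof of that criterion via $\mathrm{Tor}$-vanishing and d\'evissage along $0 \to \m_A \to A^+ \to K^+ \to 0$ (and you make an unnecessary but harmless reduction to $n=1$); your outline is exactly the content of the cited result.
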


\begin{proof}
%
 Let~$\m$ be the maximal ideal of~$A$ and set $(K,K^+) = (A/\m,A^+/\m)$.
 Since~$A$ is the localization of a smooth $k$-algebra,~$\Omega^n_{A/k}$ is flat (even free).
 Therefore, $\Omega^{n,\log}_{(K,K^+)}$ and $\Omega^n_{K^+/k^+}$ are torsion free by \cref{Omega_valuation_tf}.
 We have the following identifications:
 \[
  \Omega^{n,\log}_{(A,A^+)} \otimes_{A^+} A \cong \Omega^n_{A/k},	\qquad \Omega^{n,\log}_{(A,A^+)} \otimes_{A^+} K^+ \cong \Omega^{n,\log}_{(K,K^+)},
 \]
 and similarly for $\Omega^n_{A^+/k^+}$.
 The result thus follows from the flatness criterion \cite{HueAd}, Proposition~11.8.
\end{proof}

\subsection{The presheaf of logarithmic differentials} \label{section_presheaf_log_diff}

The naive idea of defining logarithmic differentials on an adic space $\cX$ is to set for an affinoid open $\Spa(A,A^+)$
\[
 \Omega^{\log}(\Spa(A,A^+)) = \Omega^{\log}_{(A,A^+)}
\]
and to glue these for general open subspaces.
This approach is too naive for various reasons.
Unfortunately the sheaf condition is not satisfied.
Consider for instance the following

\begin{example}
 Let $\cX$ be the affinoid adic space $\Spa(k[T,T^{-1}],k)$ over an algebraically closed field~$k$.
 On the one hand,
 \[
  \Omega^{\log}_{(k[T,T^{-1}],k)} = \Omega_{(k \setminus \{0\} \to k)/(k \setminus \{0\} \to k)} = 0.
 \]
 On the other hand, $\cX$ is covered by the affinoid open subspaces $\Spa(k[T,T^{-1}],k[T])$ and $\Spa(k[T,T^{-1}],k[T^{-1}])$.
 The logarithmic differentials $dT/T$ and $-dT^{-1}/dT^{-1}$ on $\Spa(k[T,T^{-1}],k[T])$ and $\Spa(k[T,T^{-1}],k[T^{-1}])$, respectively, coincide on the intersection but do not lift to a global section.
 Hence, the sheaf condition is not satisfied.

 Apart from the fact that the above defined presheaf of logarithmic differentials is not a sheaf, its sections on $\Spa(k[T,T^{-1}],k)$  are not the ones we would expect.
 Intuitively there should be a global section lifting $dT/T$ and $-dT^{-1}/dT^{-1}$.
\end{example}

To overcome the problem described in the example we only work with strict affinoids, which are defined as follows.

\begin{definition}
 We say that a Huber pair $(A,A^+)$ is \emph{strict} if~$A$ is a localization of~$A^+$.
 An affinoid adic space $\Spa(A,A^+)$ is \emph{strict} if $(A,A^+)$ is strict.
 For an adic space~$\cX$ we denote the category of strict affinoid open subspaces by $\cX_{\straff}$.
\end{definition}

\begin{lemma} \label{strict_neighborhood_basis}
 Let $\cX$ be an adic space locally of the form $\Spa(A,A^+)$ with $A/A^+$ essentially of finite type (i.e., $A$ is a localization of an $A^+$-algebra of finite type).
 Then the strict affinoids of~$\cX$ form a basis of the topology.
\end{lemma}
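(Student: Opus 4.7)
The plan is to reduce to the affinoid case and then explicitly construct strict rational neighborhoods of each point. Since the hypothesis is local on $\cX$, I may assume $\cX = \Spa(A, A^+)$ with $A = T^{-1}A^+[y_1, \ldots, y_m]$ for a finite set of generators $y_1, \ldots, y_m \in A$ and a multiplicative subset $T \subseteq A^+[y_1, \ldots, y_m]$. Since rational subsets form a basis of the topology on $\cX$, it suffices to refine an arbitrary rational neighborhood of a point $x \in \cX$ by a strict affinoid containing $x$.

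The key construction: for each subset $I \subseteq \{1, \ldots, m\}$, let $\cX_I$ be the rational subset
\[
 \cX_I := \{x \in \cX \;:\; |y_i(x)| \ge 1 \ne 0 \text{ for } i \in I, \; |y_j(x)| \le 1 \text{ for } j \notin I\}.
\]
Since every valuation satisfies either $|y_i| \le 1$ or $1 \le |y_i|$ (the latter forcing $|y_i| \ne 0$), the collection $\{\cX_I\}$ covers $\cX$. I would verify strictness of each $\cX_I = \Spa(A_I, A_I^+)$ as follows. The rational localization formalism gives $A_I = A[y_i^{-1} : i \in I]$ and $A_I^+ \supseteq A^+[y_j, y_i^{-1} : j \notin I, i \in I]$. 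Inverting $T$ together with $\{y_i^{-1} : i \in I\}$ in the latter subring recovers all of $A_I$, so $A_I$ is a localization of $A_I^+$.

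Next I would observe that strictness is preserved under rational localization: if $B = S^{-1}B^+$ and $(B', B^{\prime +})$ is obtained by a rational localization $B' = B[1/g]$, then $B^{\prime +} \supseteq B^+$, and consequently $B' = B^{\prime +}[s^{-1}, g^{-1} : s \in S]$ is a localization of $B^{\prime +}$. Combining the two observations, for an open $U \subseteq \cX$ and $x \in U$, I would pick $I$ with $x \in \cX_I$ and then choose a rational subset $V$ of $\cX_I$ with $x \in V \subseteq U \cap \cX_I$; strictness of $V$ follows from strictness of $\cX_I$ via the preceding stability.

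The most delicate step will be verifying strictness of the $\cX_I$, which requires carefully unwinding the rational localization formalism for discretely ringed Huber pairs to confirm precisely which generators of $A$ land in $A_I^+$. Once this is settled, the covering and refinement steps are largely bookkeeping.
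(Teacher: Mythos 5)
Your proof is correct, but it takes a genuinely different route from the paper's. The paper reduces to an affinoid open $\Spa(B,B^+)$, takes a compactification $Y \to \Spec B^+$ of $\Spec B$ (using that $B$ is essentially of finite type over $B^+$), identifies $\Spa(B,B^+)$ with $\Spa(B,Y)$ via \cite{HueAd}, Lemma~7.5, and then obtains strict affinoids by covering $Y$ by affine charts $\Spec A_i^+$ and covering $\Spec B \cap \Spec A_i^+$ by principal opens of $\Spec A_i^+$. You instead stay entirely inside Huber's rational-subset formalism: the sets $\cX_I$ are indeed rational (each is an intersection of loci $\{x : |1| \le |y_i(x)| \ne 0\}$ and $\{x : |y_j(x)| \le |1|\}$), they cover $\cX$ because every valuation satisfies $|y_i(x)| \le 1$ or $|y_i(x)| > 1$, and your sandwich argument (if $C \subseteq A_I^+ \subseteq A_I$ with $A_I$ a localization of the subring $C = A^+[y_j, y_i^{-1}]$ at a multiplicative subset of $C$, then $A_I$ is also a localization of $A_I^+$) correctly yields strictness of $\cX_I$ and, by the same mechanism, of rational subsets of strict affinoids. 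The only imprecision is the formula $B' = B'^{+}[s^{-1}, g^{-1}]$: the denominator $g$ of a rational subset need not lie in $B'^{+}$, but writing $g = b^+/s_0$ with $b^+ \in B^+$ and $s_0 \in S$ shows that inverting $g$ amounts to inverting the image of $b^+$, which does lie in $B'^{+}$, so the argument goes through. What your approach buys is self-containedness --- no Nagata compactification and no appeal to an external comparison lemma --- at the cost of the careful unwinding of the rational localization formulas for discretely ringed Huber pairs that you flag; the paper's argument is shorter given the cited results and meshes with the $\Spa(X,S)$ formalism used throughout the rest of the article.
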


\begin{proof}
 Without loss of generality we may assume that $\cX$ is of the form $\Spa(A,A^+)$ with $A/A^+$ essentially of finite type.
 Given an affinoid open subspace $\Spa(B,B^+)$ we have a diagram
 \[
  \begin{tikzcd}
    B			& A		\ar[l]			\\
    B^+	\ar[u]	& A^+	\ar[l]	\ar[u]
  \end{tikzcd}
 \]
 such that $\Spec B \to \Spec A$ is an open immersion and $B^+$ is the normalization in $B$ of an $A^+$-algebra of finite type.
 In particular,~$B$ is essentially of finite type over~$B^+$.
 It thus has a compactification $Y \to \Spec B^+$.
 By \cite{HueAd}, Lemma~7.5 we have an identification $\Spa(B,B^+) = \Spa(B,Y)$.
 Covering~$Y$ by affines $\Spec A_i^+$ and each $\Spec B \cap \Spec A_i^+$ by affines $\Spec A_{ij}$, we obtain a cover of $\Spa(B,Y)$ by the strict affinoids $\Spa(A_{ij},A_i^+)$.
 \end{proof}

 \begin{lemma} \label{tensor_product}
 Let $(A,A^+)$, $(B,B^+)$, and $(C,C^+)$ be strict affinoids.
 Then the tensor product
 \[
  (D,D^+) = (B,B^+) \otimes_{(A,A^+)} (C,C^+)
 \]
 is strict.
\end{lemma}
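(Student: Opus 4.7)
My plan is to produce a multiplicative subset $V\subseteq D^+$ such that the induced map $(D^+)[V^{-1}]\to D$ is an isomorphism. Strictness of $(B,B^+)$ and $(C,C^+)$ gives multiplicative subsets $T\subseteq B^+$ and $U\subseteq C^+$ with $B=T^{-1}B^+$ and $C=U^{-1}C^+$. The natural candidate for $V$ is the multiplicative subset of $D^+$ generated by the images of the elementary tensors $t\otimes 1$ and $1\otimes u$ (for $t\in T$, $u\in U$) coming from $B^+\otimes_{A^+}C^+\to D^+$; these lie in $D^+$ since $D^+$ contains the image of $B^+\otimes_{A^+}C^+$, and each is invertible in $D=B\otimes_A C$ with inverse $t^{-1}\otimes 1$, resp.\ $1\otimes u^{-1}$.

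The first step is to check that every element of $D$ has the form $d/v$ with $d$ in the image of $B^+\otimes_{A^+}C^+$ and $v\in V$. Given a finite sum $\sum_i b_i\otimes c_i$, write $b_i=\beta_i/t_i$, $c_i=\gamma_i/u_i$ with $\beta_i\in B^+$, $\gamma_i\in C^+$, $t_i\in T$, $u_i\in U$; clearing denominators against $v=\prod_i(t_i\otimes 1)(1\otimes u_i)\in V$ expresses the sum as $d/v$ with $d\in B^+\otimes_{A^+}C^+$, hence $d\in D^+$. This shows the canonical map $(D^+)[V^{-1}]\to D$ is surjective.

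For injectivity I use that $D^+\hookrightarrow D$ by definition (the integral closure lives inside $D$): if $d_1/v_1,\,d_2/v_2\in (D^+)[V^{-1}]$ become equal in $D$, then $d_1v_2=d_2v_1$ already holds in $D^+$, so the two fractions are equal in $(D^+)[V^{-1}]$. Together with surjectivity this gives $D\cong (D^+)[V^{-1}]$, which is exactly the statement that $(D,D^+)$ is strict.

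I do not foresee any real obstacle: the only thing one needs to be careful about is the distinction between inverting $V$ inside the ring $B^+\otimes_{A^+}C^+$ versus inside its integral closure $D^+$, which is handled by the observation that $D^+\subseteq D$ (so the relevant fractions in $D^+[V^{-1}]$ can be checked for equality inside $D$). Note that strictness of $(A,A^+)$ itself is not used in the argument.
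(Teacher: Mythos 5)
Your proof is correct and follows essentially the same route as the paper: the paper inverts $S = D^+ \cap D^{\times}$ and clears denominators on elementary tensors exactly as you do, with injectivity likewise coming from $D^+ \subseteq D$. If anything, your version is slightly more careful in handling a general finite sum of elementary tensors (the paper reduces to a single tensor ``without loss of generality''), but this is a cosmetic difference, not a different argument.
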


\begin{proof}
 Let $S = D^+ \cap D^{\times}$.
 We claim that $D = S^{-1}D^+$.
 Every element of~$S$ is invertible in~$D$, whence the existence of a natural homomorphism $S^{-1}D^+ \to D$.
 Injectivity is clear as $D^+ \subset D$.
 Let $d \in D$.
 We want to write $d = d^+/s$ for $d^+ \in D^+$ and $s \in S$.
 Without loss of generality we may assume $d = b \otimes c$ for $b \in B$ and $c \in C$.
 But by assumption $b = b^+/s$ and $c = c^+/t$ for $b^+ \in B^+$, $s \in B^+ \cap B^{\times}$, and $b \in C^+ \cap C^{\times}$.
 Hence $d = (b^+ \otimes c^+)/(s \otimes t)$.
\end{proof}

Let $X \to S$ be a morphism of schemes which is essentially of finite type.
We equip $\Spa(X,S)_{\straff}$ (see \cite{Tem11}, \S~3.1 for the definition of $\Spa(X,S)$ for a morphism of schemes $X \to S$) with the topology whose coverings are surjective families.
Note that by \cref{tensor_product} the necessary fiber products for the structure of a site exist.
We denote by $\Spa(X,S)_{\Top}$ the site associated with the topological space $\Spa(X,S)$.
By \cref{strict_neighborhood_basis} the corresponding topoi of $\Spa(X,S)_{\straff}$ and $\Spa(X,S)_{\Top}$ are equivalent.
If~$\mathcal{F}$ is a presheaf on $\Spa(X,S)_{\straff}$ we can view its sheafification as a sheaf~$\mathcal{G}$ on all of $\Spa(X,S)$.
Slightly abusing notation we will say that~$\mathcal{G}$ is the sheafification of~$\mathcal{F}$.
We have thus justified the restriction to strict affinoids.

Our presheaf of interest is the presheaf of logarithmic differentials~$\Omega^{\log}$.
It is defined on $\Spa(X,S)_{\straff}$ as
\[
 \Omega^{\log}(\Spa(A,A^+)) := \Omega^{\log}_{(A,A^+)}.
\]
Similarly we define~$\Omega^{n,\log}$ by
\[
 \Omega^{n,\log}(\Spa(A,A^+)) := \Omega^{n\log}_{(A,A^+)}.
\]
Even restricted to strict affinoids $\Omega^{\log}$ is not a sheaf as the following example shows.

\begin{example}
 For positive integers $d$ and $r$ we consider the action of $\mu_d$ on $\bC[T_0,\ldots,T_r]$ induced by the diagonal embedding of~$\mu_d$ in $\Gl_{r+1}(\bC)$.
 In other words, $\xi \in \mu_d$ acts by multiplying each coordinate with~$\xi$.
 We consider the quotient spaces
 \[
  X_{r,d} := (\Spec \bC[T_0,\ldots,T_r])/\mu_d = \Spec \bC[T_0,\ldots,T_r]^{\mu_d}.
 \]
 They are normal and can also be described as the affine cone of the $d$-th Veronese embedding of~$\P_C^r$.
 Moreover, note that
 \[
  A^+_{r,d} := \bC[T_0,\ldots,T_r]^{\mu_d}
 \]
 is the $\bC$-subalgebra of $\bC[T_0,\ldots,T_r]$ generated by all monomials of degree~$d$.
 In \cite{GrRo11}, Proposition~4.1 it is shown that $\Omega_{X_{r,d}}$ has torsion if and only if $d \ge 3$.

 Let $U_{r,d} = \Spec A_{r,d}$ be the open subscheme of~$X_{r,d}$ defined by inverting $T_0^d$.
 Then $(A_{r,d},A_{r,d}^+)$ is a strict Huber pair.
 By transitivity (\cref{transitivity_BC} (i)) and the vanishing of $H_1(\LL_{((T_0^d)^{\N} \to A_{r,d}^+)/(\{0\} \to A_{r,d}^+)})$ (\cref{cotangent_free_monoid}) we know that
 \[
  \Omega_{A_{r,d}^+/k} \to \Omega^{\log}_{(A_{r,d},A_{r,d}^+)/(\bC,\bC)}
 \]
 is injective.
 Hence, $\Omega^{\log}_{(A_{r,d},A_{r,d}^+)/(\bC,\bC)}$ has torsion as well for $d \ge 3$.

 Let $Y_{r,d} \to X_{r,d}$ be the blowup in the origin.
 Denote by~$D$ the Cartier divisor of~$Y_{r,d}$ which is the pullback of the Cartier divisor of~$X_{r,d}$ defined by $X_0^d$.
 Then~$Y_{r,d}$ is smooth and~$D$ is a simple normal crossings divisor.
 In particular, $(U_{r,d},Y_{r,d})$ is log smooth, so $\Omega^{\log}_{(U_{r,d},Y_{r,d})/(\bC,\bC)}$ is torsion free.

 We cover~$Y_{r,d}$ by affine schemes $\Spec B_i^+$.
 As the complement of~$U_{r,d}$ in~$Y_{r,d}$ is the support of a principal Cartier divisor, the intersection of $\Spec B_i$ with $U_{r,d}$ is still affine.
 We denote the corresponding ring by~$B_i$.
 The strict affinoids $\Spa(B_i,B_i^+)$ cover $\Spa(A_{r,d},A_{r,d}^+)$.
 Moreover, $\Omega^{\log}_{(B_i,B_i^+)/(\bC,\bC)}$ is a finitely generated free $B_i^+$-module as $(B_i,B_i^+)$ is log smooth over $(\bC,\bC)$.
 In particular, $\Omega^{\log}_{(B_i,B_i^+)/(\bC,\bC)}$ is torsion free over~$A_{r,d}^+$.
 But then
 \[
  \Omega^{\log}_{(A_{r,d},A_{r,d}^+)/(\bC,\bC)} \to \prod_i \Omega^{\log}_{(B_i,B_i^+)/(\bC,\bC)}
 \]
 cannot be injective because $\Omega^{\log}_{(A_{r,d},A_{r,d}^+)/(\bC,\bC)}$ has torsion.
 We conclude that the sheaf axiom is not satisfied.
\end{example}

The example already suggests that the problems lie in the singularities of $\Spec A^+$.
Indeed we will see in \cref{section_differentials_smooth} that the differentials are well behaved for log smooth Huber pairs.

We are now interested in the sheafification of~$\Omega^{n,\log}$.
For a strict Huber pair $(A,A^+)$ over $(k,k^+)$ and $n \ge 1$, we consider the natural map
\[
 \Omega^{n,\log}_{(A,A^+)}\to \Omega^{n,\log}_{(A,A^+)} \otimes_{A^+} A \overset{\sim}{\to} \Omega_A.
\]
If $(A,A^+)$ is local, it is injective by \cref{Omega_injective}.
We conclude that the sheafification of $\Omega^{n,\log}$ is a subsheaf of $\Omega^n$.
It will turn out that the sheafification can be described in terms of the K\"ahler seminorm which we study in \cref{section_Kaehler}.

\section{Rangers} \label{section_rangers}

In \cite{Tem16}, \S~5, Temkin studies the Kähler seminorm on the differentials $\Omega_{K/k}$ for an extension of real valued fields $K/k$.
It is defined as
\[
 |\omega|_{\Omega} := \inf_{\omega = \sum f_i dg_i} \max_i \{|f_i|_A |g_i|_A\},
\]
where the infimum is taken over all representations of~$\omega$ as a finite sum $\omega = \sum_i f_i dg_i$.
Note that this infimum does not necessarily exist as an element of the value group of~$K$.
It is rather a nonnegative real number once we have fixed an embedding of the value group into $\R_{>0}$.

We aim for an analogous definition of the Kähler seminorm for extensions of local Huber pairs.
However, we need to investigate where to take the infimum.
In general, the value group~$\Gamma$ of the valuation on the local Huber pair has higher rank and does not embed into the real numbers.
This section presents a construction that can be seen as a completion of~$\Gamma$, called the set of \emph{rangers}.
\added{In joint work with Michael Temkin we elaborate this construction in more detail, see \S 2 in \cite{adiccurves}.}
What we are using here is self contained.
We also want to remark that similar constructions are discussed in \cite{KuhNartCuts}.
However, their focus lies more in the classification of ordered groups.

\begin{definition}
 For an abelian ordered group~$\Gamma$ (written multiplicatively) a \emph{ranger} is a pair $(\Gamma',\gamma')$, where~$\Gamma'$ is an ordered group containing~$\Gamma$ and $\gamma' \in \Gamma'$.
 Similarly, for a \added{totally} ordered set~$\Gamma$ a \emph{ranger} is a pair $(\Gamma',\gamma')$, where~$\Gamma'$ is a \added{totally} ordered set containing~$\Gamma$ and $\gamma' \in \Gamma'$.
 Two rangers $(\Gamma_1,\gamma_1)$ and $(\Gamma_2,\gamma_2)$ are \emph{equivalent} if there is a third ranger $(\Gamma_3,\gamma_3)$ such that~$\Gamma_3$ is contained in $\Gamma_1$ and in~$\Gamma_2$ identifying~$\gamma_3$ with $\gamma_1$ and~$\gamma_2$, respectively.
\end{definition}

We see from this definition that up to isomorphism every equivalence class of rangers has a smallest representative.
If for a ranger $(\Gamma',\gamma')$ the element~$\gamma'$ is contained in~$\Gamma$, this smallest representative is $(\Gamma,\gamma')$.
\added{More generally, if $(\Gamma',\gamma')$ is a smallest representative of an equivalence class of rangers over an ordered group~$\Gamma$, then $\Gamma'$ is a quotient of $\Gamma \oplus \Z$, containing~$\Gamma$ and with a compatible ordering, and~$\gamma'$ is the image of $1 \in \Z$.
Similarly a ranger over a totally ordered set~$\Gamma$ can be identified either with an element of~$\Gamma$ or a total order on $\Gamma \cup \{*\}$ compatible with the ordering of~$\Gamma$.
From this description it is clear that the equivalence classes of rangers form a set, which we denote by~$\cR_\Gamma$.}

Unfortunately, in the case of ordered groups, there is no canonical multiplication on~$\cR_\Gamma$ extending the one on~$\Gamma$.
But we can multiply elements $\gamma \in \Gamma$ with a ranger $(\Gamma',\gamma')$ by setting
\[
 \gamma \cdot (\Gamma',\gamma') := (\Gamma',\gamma \gamma').
\]
In this way the set of rangers~$\cR_\Gamma$ is an ordered set with an action of~$\Gamma$.

For an ordered group~$\Gamma$ we denote by~$\Gamma_\Q$ its divisible hull and by~$|\Gamma|$ the underlying ordered set.

\begin{lemma} \label{rangers_comparison}
 We have natural identifications
 \[
  \cR_\Gamma \cong \cR_{\Gamma_\Q} \cong \cR_{|\Gamma_\Q|}.
 \]
\end{lemma}

\begin{proof}
 Every ranger of $\Gamma_\Q$ is naturally also a ranger of~$\Gamma$.
 If we start with a ranger $(\Gamma',\gamma')$ of~$\Gamma$, the ranger $(\Gamma'_\Q,\gamma')$ lies in the same equivalence class and comes from $\cR_{\Gamma_{\Q}}$.
 This proves the first isomorphism.
 
 A ranger of~$\Gamma_\Q$ is naturally also a ranger of~$|\Gamma_\Q|$.
 For a ranger $(\Gamma',\gamma')$ of~$|\Gamma_\Q|$ \added{we distinguish the cases $\gamma' \in |\Gamma_\Q|$ and $\gamma' \notin |\Gamma_\Q|$.
 In the first case~$\gamma'$ can be seen as an element of~$\Gamma_\Q \subset \cR_{\Gamma_\Q}$.
 In the second case we} define an ordering on the group $\Gamma_{\Q} \oplus (\gamma')^\Z$ in the following way.
 An element $\gamma (\gamma')^a$ is greater than $\delta (\gamma')^b$ if and only if either $a=b$ and $\gamma > \delta$ or $a \ne b$ and
 \[
  \left(\frac{\gamma}{\delta}\right)^{1/(b-a)} > \gamma'
 \]
 in~$\Gamma'$.
 This definition uses the divisibility of~$\Gamma_\Q$ and one checks that it satisfies the axioms for an ordered group.
 We conclude that $(\Gamma_\Q \oplus (\gamma')^\Z,\gamma')$ is a ranger of $\Gamma_\Q$ that is equivalent to $(\Gamma',\gamma')$ when viewed as a ranger of $|\Gamma_\Q|$.
\end{proof}

If $(\Gamma',\gamma')$ is a ranger in~$\cR_{|\Gamma|}$ we often just denote it by~$\gamma'$.
This means we implicitly assume an ordering on $\Gamma \cup \{\gamma'\}$.
In view of \cref{rangers_comparison} we can thus specify a ranger of~$\Gamma$ by giving a ranger~$\gamma'$ of~$|\Gamma_\Q|$.

Let us describe the different types of rangers.
A ranger $(\Gamma',\gamma')$ of~$\Gamma$ is called \emph{bounded} if there are $\gamma_1$ and~$\gamma_2$ in~$\Gamma$ such that
\[
 \gamma_1 \le \gamma' \le \gamma_2.
\]
Otherwise it is \emph{unbounded}.
There are only two equivalence classes of unbounded rangers: $0$, which satisfies $0 < \gamma$ for all $\gamma \in \Gamma$ and $\infty$ satisfying $\gamma < \infty$ for all $\gamma \in \Gamma$.
Every element~$\gamma$ of~$\Gamma$ determines a ranger $(\Gamma,\gamma)$.
We call these rangers \emph{principal}.
A nonprincipal bounded ranger $\gamma'$ divides~$\Gamma$ into a disjoint union
\[
 \Gamma = \underbrace{\{\gamma \in \Gamma \mid \gamma < \gamma'\}}_{\Gamma_{\gamma'}^-} \cup \underbrace{\{\gamma \in \Gamma \mid \gamma > \gamma'\}}_{\Gamma_{\gamma'}^+}.
\]
If~$\Gamma_{\gamma'}^-$ has a maximum or $\Gamma_{\gamma'}^+$ a minimum, we call~$\gamma'$ an \emph{infinitesimal ranger}.
Otherwise~$\gamma'$ is a \emph{cut ranger}.

The functoriality of rangers is a bit subtle.
For a surjection
\[
 p : \Delta \twoheadrightarrow \Gamma
\]
of totally ordered groups we obtain a natural map
\[
 p_* : \cR_\Delta \longrightarrow \cR_\Gamma
\]
via the following construction.
The kernel~$K$ of~$p$ is a convex subgroup of~$\Delta$.
For a ranger $(\Delta',\delta')$ of~$\Delta$ we consider the quotient~$\Gamma'$ of~$\Delta'$ by the convex subgroup~$K'$ generated by~$K$.
Then we have a diagram
\[
 \begin{tikzcd}
  \Delta'	\ar[r,twoheadrightarrow]							& \Gamma' = \Delta'/K'	\\
  \Delta	\ar[r,twoheadrightarrow]	\ar[u,hookrightarrow]	& \Gamma.				\ar[u,hookrightarrow]
 \end{tikzcd}
\]
Defining~$\gamma'$ to be the image of~$\delta'$ in~$\Gamma'$, we obtain a ranger $(\Gamma',\gamma')$ of~$\Gamma$ and one checks that this construction maps equivalent rangers to equivalent rangers.
Therefore, sending~$(\Delta',\delta')$ to $(\Gamma',\gamma')$ yields a well defined map~$p_*$.

For injections
\[
 \iota : \Delta \longrightarrow \Gamma
\]
of totally ordered groups the easier functoriality is contravariant.
Every ranger $(\Gamma',\gamma')$ of~$\Gamma$ is also a ranger of~$\Delta$ and this defines a map
\[
 \iota^* : \cR_\Gamma \longrightarrow \cR_\Delta.
\]
However, we can also construct a natural map $\cR_\Delta \to \cR_\Gamma$ if~$\Delta$ is a \emph{convex} subgroup.
This should be possible in greater generality but we won't need it here and refer to the aforementioned work in progress with Michael Temkin.

\begin{lemma} \label{rangers_convex_subgroup}
 Let $\iota:\Delta \to \Gamma$ be the inclusion of a convex subgroup of an ordered group~$\Gamma$.
 Then there is a unique inclusion of rangers
 \[
  \iota_*:\cR_\Delta \hookrightarrow \cR_\Gamma.
 \]
 with convex image that extends~$\iota$.
 More precisely, the image of~$\iota_*$ is given by those rangers $(\Gamma',\gamma')$ such that $\gamma' < \gamma$ for all $\gamma \in \Gamma_\Q$ with $\gamma > \Delta_\Q$ and $\gamma' > \gamma$ for all $\gamma \in \Gamma_\Q$ with $\gamma < \Delta_\Q$.
\end{lemma}

\begin{proof}
 By \cref{rangers_comparison} it suffices to construct a map
 \[
  \cR_{|\Delta_\Q|} \longrightarrow \cR_{|\Gamma_\Q|},
 \]
 We map a ranger $\delta'$ of $|\Delta_\Q|$ to
 \[
  (\Gamma_\Q \cup \{\delta'\},\delta')
 \]
 with the ordering defined by
 \[
  \gamma < \delta'	\iff \gamma < \Delta ~\text{or}~[\gamma \in \Delta~\text{and}~\gamma<\delta']
 \]
 for an element $\gamma \in \Gamma_\Q$.
 It is clear that the resulting map on rangers has the prescribed image and that it is unique with this property.
\end{proof}

\begin{lemma}
 For an inclusion
 \[
  \iota : \Delta \hookrightarrow \Gamma
 \]
 of a convex subgroup~$\Delta$ of a totally ordered group~$\Gamma$, the composition
 \[
  \cR_\Delta \overset{\iota_*}{\longrightarrow} \cR_\Gamma \overset{\iota^*}{\longrightarrow} \cR_\Delta
 \]
 is the identity.
\end{lemma}

\begin{proof}
 By \cref{rangers_comparison} we may assume that~$\Gamma$ and~$\Delta$ are divisible.
 Moreover, by the same lemma we know that it is enough to consider rangers of the underlying ordered sets.
 For principal rangers the statement is clear.
 
 The map $\iota^*$ maps a nonprincipal minimal ranger $(\Delta \cup \{\delta\},\delta')$ of~$|\Delta|$ to $(\Gamma \cup \{\delta'\},\delta')$ with the ordering described as in the proof of \cref{rangers_convex_subgroup}.
 Applying~$\iota^*$ we get the same ranger $(\Gamma \cup \{\delta'\},\delta')$ but viewed as a ranger of~$|\Delta|$.
 As such it is equivalent to $(\Delta \cup \{\delta'\},\delta')$, which proves the assertion.
\end{proof}

Suppose~$\Gamma$ is a totally ordered group that possesses a convex subgroup~$\Delta$ such that the quotient $\Gamma/\Delta$ has rank~$1$ (i.e., $\Gamma$ is microbial).
We embed $\Gamma/\Delta$ into $\R_{>0}$ and thus obtain a homomorphism
\[
 \Gamma \longrightarrow \Gamma/\Delta \hookrightarrow \R_{>0} = (0,\infty).
\]
We extend this to a map
\[
 \pi : \cR_\Gamma \longrightarrow [0,\infty]
\]
in the following way.

We take a bounded ranger $(\Gamma',\gamma')$.
Then also~$\Gamma'$ has a convex subgroup~$\Delta'$ such that $\Gamma'/\Delta'$ has rank~$1$ and $\Delta \subseteq \Delta'$.
The resulting homomorphism
\[
 \Gamma/\Delta \longrightarrow \Gamma'/\Delta'
\]
is an inclusion of totally ordered groups of rank~$1$ (using the boundedness of the ranger~$\gamma'$).
The inclusion $\Gamma/\Delta \hookrightarrow \R_{>0}$ extends uniquely to $\Gamma'/\Delta'$ and we obtain a homomorphism
\[
 \Gamma' \twoheadrightarrow \Gamma'/\Delta' \hookrightarrow \R_{>0}.
\]
extending $\Gamma \to \R_{>0}$.
Now we define $\pi(\Gamma',\gamma')$ to be the image of~$\gamma'$ in $\R_{>0}$ under this homomorphism.

Concerning the unbounded rangers~$0$ and~$\infty$, we just send~$0$ to~$0$ and~$\infty$ to~$\infty$.
One readily checks that this construction does not depend on the representative $(\Gamma',\gamma')$ of an equivalence class of rangers and that
\[
 \gamma'_1 \le \gamma'_2	\implies	\pi(\gamma'_1) \le \pi(\gamma'_2).
\]
Moreover,~$\pi$ is compatible with the action of~$\Gamma$ in the sense that for $\gamma \in \Gamma$ and $\gamma' \in \cR_\Gamma$ we have
\[
 \pi(\gamma \gamma') = \pi(\gamma)\pi(\gamma').
\]
In view of \cref{rangers_comparison} we can also describe~$\pi$ in terms of rangers $\cR_{|\Gamma_\Q|}$ of the totally ordered set~$|\Gamma_\Q|$.
A non-principal ranger $\gamma'$ of $|\Gamma_\Q|$ defines a decomposition of~$|\Gamma_\Q|$ into a disjoint union
\[
 \{\gamma \in \Gamma_\Q \mid \gamma < \gamma'\} < \{\gamma \in \Gamma_\Q \mid \gamma > \gamma'\}.
\]
of (possibly empty) subsets of~$\Gamma_\Q$.
Since~$\pi$ respects the ordering,
\[
 \pi(\{\gamma \in \Gamma_\Q \mid \gamma < \gamma'\}) \le \pi(\{\gamma \in \Gamma_Q \mid \gamma > \gamma'\})
\]
and these sets are either disjoint or contain one common element.
Moreover, as $\pi(\Gamma_\Q)$ is dense in~$\R_{<0}$,
\[
 \supp \{\gamma \in \Gamma_\Q \mid \gamma < \gamma'\} = \inf \{\gamma \in \Gamma_\Q \mid \gamma > \gamma'\}
\]
(in case both of the sets are nonempty).
Then $\pi(\gamma')$ equals this infimum/supremum.
If one of the sets is empty we obtain $\pi(\gamma') = 0$ or $\pi(\gamma') = \infty$ according to whether the right or the left hand set is empty.

\begin{proposition} \label{fibers_pi}
 Suppose~$\Gamma$ is microbial.
 The map
 \[
  \pi : \cR_\Gamma \longrightarrow [0,\infty]
 \]
 is surjective.
 Moreover, for $\gamma \in \Gamma_\Q$ the fibre over $\pi(\gamma)$ equals $\gamma \cdot \cR_\Delta$ (where $\cR_\Delta$ is viewed as a subset of $\cR_\Gamma$ via \cref{rangers_convex_subgroup}) and for $r \in [0,\infty] \setminus \pi(\Gamma_\Q)$, the fibre consists of a single point $(\Gamma',\gamma') \in \cR_\Gamma \setminus \Gamma$.
\end{proposition}

\begin{proof}
 By \cref{rangers_comparison} it suffices to check the corresponding statement for~$\cR_{|\Gamma_\Q|}$.
 For the surjectivity of~$\pi$ we just need to check that the points in $[0,\infty] \setminus \pi(\Gamma_\Q)$ lie in the image.
 But such a point~$r$ defines a ranger~$\gamma'$ of $|\Gamma_\Q|$ with ordering defined by
 \[
  \gamma' < \gamma	\iff	r < \pi(\gamma)
 \]
 for $\gamma \in |\Gamma_\Q|$ and we have 
 \[
  \pi(\gamma') = r.
 \]
 From this description it is also clear that the fibre over~$r$ consists of the single ranger~$\gamma'$.
 
 Now we take an element $\gamma \in \Gamma$ and a ranger $\delta' \in \cR_{|\Delta_\Q|}$.
 By \cref{rangers_convex_subgroup}, $\delta'$ can be viewed as a ranger of $|\Gamma_\Q|$ satisfying $\delta' < \gamma_1$ for all $\gamma_1 \in \Gamma_\Q$ with $\gamma_1 > \Delta_\Q$ and $\delta' > \gamma_2$ for all $\gamma_2 \in \Gamma_\Q$ with $\gamma_2 < \Delta_\Q$.
 Then
 \[
  \pi(\{\gamma_2 \in \Gamma_\Q \mid \gamma_2 < \Delta_\Q\}) \le \pi(\delta') \le \pi(\{\gamma_1 \in \Gamma_\Q \mid \gamma_1 > \Delta_\Q\}.
 \]
 The union of the left hand and the right hand subset of $\R_{>0}$ equals $\pi(\Gamma_\Q \setminus \Delta_\Q)$, which is dense in $\R_{>0}$ as all of $\Delta_\Q$ is mapped to the single point~$\{1\}$.
 Moreover,
 \[
  \pi(\{\gamma_2 \in \Gamma_\Q \mid \gamma_2 < \Delta_\Q\}) \le 1 \le \pi(\{\gamma_1 \in \Gamma_\Q \mid \gamma_1 > \Delta_\Q\}.
 \]
 This implies $\pi(\delta') = 1$ and hence
 \[
  \pi(\gamma \delta') = \pi(\gamma) \pi(\delta') = \pi(\gamma).
 \]
 We conclude that $\gamma \cR_\Gamma \subseteq \pi^{-1}(\pi(\gamma))$.
 On the other hand suppose that $\pi(\gamma') = \pi(\gamma)$ for some ranger $\gamma' \in \cR_{|\Gamma_\Q|}$.
 We set $\delta' = \gamma^{-1}\gamma'$ and want to show $\delta' \in \cR_{|\Delta_\Q|}$ via the description of the image of $\cR_\Delta \to \cR_\Gamma$ from \cref{rangers_convex_subgroup}.
 Let $\gamma_1$ be an element of~$\Gamma_\Q$ satisfying $\gamma_1 > \Delta_\Q$.
 Then
 \[
  \pi(\gamma_1) > 1 = \pi(\delta').
 \]
 This implies $\gamma_1 > \delta'$.
 Similarly we have $\gamma_2 < \delta'$ for all $\gamma_2 \in \Gamma_\Q$ such that $\gamma_2 < \Delta_\Q$.
 By \cref{rangers_convex_subgroup} this implies $\delta' \in \cR_{|\Delta_\Q|}$, whence
 \[
  \gamma' \in \gamma \cR_\Delta.
 \]
\end{proof}

\section{Seminorms} \label{section_seminorms}

Having introduced rangers in \cref{section_rangers} we can now study seminorms that take values in rangers.
Let $A$ be a ring together with a valuation.
For an element $a \in A$ we denote its value by $|a|$ and write~$\Gamma$ for the value group of the valuation.

An \emph{ordered $\Gamma$-set} is an ordered set~$\bar{\Gamma}$ together with an action of~$\Gamma$ preserving the ordering.
By this we mean that for $\gamma_1 < \gamma_2$ in $\Gamma$ and $\bar{\gamma}_1 \le \bar{\gamma}_2$ in $\bar{\Gamma}$ we get
\[
 \gamma_1 \bar{\gamma}_1 < \gamma_2 \bar{\gamma}_2
\]
and similarly for $\gamma_1 \le \gamma_2$ and $\bar{\gamma}_1 < \bar{\gamma}_2$.
We call~$\bar{\Gamma}$ \emph{complete} if all infima and all suprema of subsets of~$\bar{\Gamma}$ are attained in~$\bar{\Gamma}$.

Our most important example of such a $\Gamma$-set is given by the rangers~$\cR_\Gamma$.
But we will also consider $\bar{\Gamma} = [0,\infty]$ in case~$\Gamma \subseteq \R_{>0}$.

For a complete ordered $\Gamma$-set~$\bar{\Gamma}$ we extend the action of $\Gamma$ to $\Gamma \cup \{0\}$ by setting
\[
 0 \cdot x := 0,
\]
where the zero on the right is defined as the infimum of the set $\bar{\Gamma}$ (which exists as~$\bar{\Gamma}$ is complete).
Note that in this way we also choose to set $0 \cdot \infty = 0$.

We generalize the notion of a $K$-seminorm in the following way

\begin{definition}
 A \emph{$\bar{\Gamma}$- seminorm} on an $A$-module~$V$ is a map
 \[
  |\cdot| : V \longrightarrow \bar{\Gamma}
 \]
 satisfying
 \begin{enumerate}[(i)]
  \item $|a x| = |a| \cdot |x|$ for all $a \in A$ and $x \in V$,
  \item	$|x + y| \le \max\{|x|,|y|\}$ for all $x,y \in V$.
 \end{enumerate}
\end{definition}

The same proof as in the classical case shows that $|x + y| = \max \{|x|,|y|\}$ in case $|x| \ne |y|$.

\subsection{The projective tensor product seminorm}

For a homomorphism $A \to B$ of valued rings and a $\bar{\Gamma}$-seminormed $A$-module~$V$ we want to study the tensor product
\[
 V \otimes_A B.
\]
If~$\bar{\Gamma}$ is complete and the action of~$\Gamma$ on~$\bar{\Gamma}$ extends to an order preserving action of the value group~$\Gamma_B$ of~$B$, we can equip $V \otimes_A B$ with the following seminorm with values in~$\bar{\Gamma}$:
For $w \in V \otimes_A B$ we set
\[
 |w| := \inf_{w=\sum_i b_i \otimes v_i}\{\max_i |b_i|\cdot |v_i|\},
\]
where the infimum runs over all presentations of~$w$ as a finite sum of elements of the form $b \otimes v$ for $b \in B$.
This seminorm is called \emph{projective tensor product seminorm}.

The aim of this subsection is to prove under some assumptions on~$A$ and~$B$ that
\[
 V \longrightarrow V \otimes_A B
\]
is an isometric embedding.
Statements like this have been proved in the literature for Banach spaces over a nonarchimedean field (with a rank one valuation).
We start by extending this result to $[0,\infty]$-seminormed vector spaces.

\begin{lemma} \label{tensor_product_classical}
 Let $L/K$ be an extension of nonarchimedean fields.
 For a $[0,\infty]$-seminormed $K$-vector space~$V$ the homomorphism
 \[
  V \longrightarrow V \otimes_K L
 \]
 is an isometric embedding. 
\end{lemma}

\begin{proof}
 It is easy to check that
 \[
  V_f := \{v \in V \mid |v| < \infty\}
 \]
 ($f$ for finite) is a subvector space of~$V$.
 For $v \in V_f$ the seminorm takes values in $\R_{\ge 0} = [0,\infty)$, so $V_f$ is a seminormed $K$-vector space in the classical sense.
 Moreover, it is easy to check that the restriction of the seminorm on $V \otimes_K L$ to $V_f \otimes_K L$ coincides with the projective tensor product seminorm on $V_f \otimes_K L$.
 Therefore we conclude by the classical result (\cite{PerGar-Schik-VS}, Corollary~10.2.10) that
 \[
  |v_f \otimes 1| = |v_f|
 \]
 for all $v_f \in V_f$.
 Suppose $v \notin V_f$ and let
 \[
  v \otimes 1 = \sum_i b_i \otimes v_i
 \]
 be a presentation of~$v$ in terms of $b_i \in B$ and $v_i \in V$.
 Not all of the vectors~$v_i$ can be contained in~$V_f$ as otherwise $v \otimes 1 \in V_f \otimes_K L$.
 Hence,
 \[
  \max_i \{|b_i| \cdot |v_i|\} = \infty.
 \]
 Taking the infimum over all presentations of $v \otimes 1$ gives
 \[
  |v \otimes 1| = \infty = |v|.
 \]
\end{proof}

We use induction on the rank to prove it in more generality.

\begin{proposition} \label{tensor_product_vs}
 Let $(L,L^+)/(K,K^+)$ be an algebraic extension of valued fields of finite rank and $V$ a $K$-vector space equipped with a $K$-seminorm $V \to \cR_{\Gamma_K}$.
 Then the natural embedding
 \[
  V \longrightarrow V \otimes_K L,
 \]
 where $V \otimes_K L$ is endowed with the projective tensor product seminorm, is isometric, i.e.,
 \[
  |v \otimes 1| = |v|
 \]
 for all $v \in V$.
\end{proposition}

\begin{proof}
 First of all we may assume that~$K$ and~$L$ are complete and~$V$ is a Banach space as completion does not change the (semi)norms and behaves well with the projective tensor product seminorm.
 We assumed that $L/K$ is algebraic because then the rangers for $\Gamma_K$ and~$\Gamma_L$ are the same.
 This is used implicitly in the constructions below.
 Moreover, we set $\Gamma_\Q := \Gamma_K \otimes \Q = \Gamma_L \otimes \Q$.
 
 We start with the trivially valued case.
 This does not imply that the seminorm on~$V$ needs to be trivial.
 It takes values in
 \[
  \cR_{\{1\}} = \{0,1,\infty\} \subseteq [0,\infty].
 \]
 Therefore, the statement is a special case of \cref{tensor_product_classical}.
 
 We now assume that the rank of~$\Gamma_K$ is at least~$1$.
 Let $\Delta$ be the maximal proper convex subgroup of~$\Gamma_K$, such that $\Gamma_K/\Delta$ has rank~$1$.
 Then we choose an embedding $\Gamma_K/\Delta \hookrightarrow \R_{>0}$.
 Composing with the valuation on~$K$ gives a real valuation
 \[
  |\cdot|^\circ : K \overset{|\cdot|}{\longrightarrow} \Gamma_K \twoheadrightarrow \Gamma_K/\Delta \hookrightarrow \R_{\ge 0}.
 \]
 The corresponding valuation ring of~$K$ is the set~$K^\circ$ of power bounded elements with maximal ideal $K^{\circ \circ}$.
 We write~$K^\succ$ for the residue field $K^\circ/K^{\circ\circ}$ and similarly for~$L$.
 Moreover, we compose the seminorm on~$V$ with the map
 \[
  \pi : \cR_\Gamma \longrightarrow [0,\infty]
 \]
 constructed in \cref{section_rangers} to obtain a seminorm
 \[
  |\cdot|^\circ : V \overset{|\cdot|}{\longrightarrow} \cR_{\Gamma_K} \overset{\pi}{\longrightarrow} [0,\infty]
 \]
 with respect to the valuation $|\cdot|^\circ$ on~$K$.
 
 Let $v_0$ be an element of~$V$.
 We want to show that
 \[
  |v_0 \otimes 1| = |v_0|.
 \]
 By \cref{tensor_product_classical} we know that
 \[
  \pi(|v_0 \otimes 1|) = |v_0 \otimes 1|^\circ = |v_0|^\circ = \pi(|v_0|).
 \]
 In case $|v_0| \notin \Gamma_\Q$ \cref{fibers_pi} tells us that this already implies 
 \[
  |v_0 \otimes 1| = |v_0|.
 \]
 If $|v_0| \in \Gamma_\Q$ we only know that
 \[
  |v_0 \otimes 1| \in |v_0| \cR_\Delta
 \]
 and we have to work a bit more.
 We set  $r := \pi(|v_0|)$ and consider the $K^\circ$-submodule
 \[
  V_r := \{v \in V \mid |v|^\circ \le r\}.
 \]
 of~$V$.
 It fits into the following diagram
 \[
  \begin{tikzcd}
   V						\ar[r]							& V \otimes_K L		\\
   V_r						\ar[r]	\ar[u,hookrightarrow]	& V_r \otimes_{K^\circ} L^\circ.	\ar[u,hookrightarrow]	\\
  \end{tikzcd}
 \]
 The upper horizontal map is isometric with respect to the seminorm $|\cdot|^\circ$ by \cref{tensor_product_classical}.
 For $v \in V_r$ we have inequalities (where we indicate in which spaces we are taking the seminorm)
 \[
  |v \otimes 1|_{V \otimes_K L}^\circ \le |v \otimes 1|_{V_r \otimes_{K^\circ} L^\circ}^\circ \le |v|_{V_r}^\circ = |v|_V^\circ.
 \]
 Since $V \to V \otimes_K L$ is isometric, all of these inequalities are equalities.
 Therefore,
 \[
  V_r \longrightarrow V_r \otimes_{K^\circ} L^\circ
 \]
 is an isometric embedding as well.

 The valuation~$|\cdot|$ on~$K$ can be written as a composition of the valuation~$|\cdot|^\circ$ with a valuation~$|\cdot|^\succ$ on the residue field $K^\succ = K^\circ/K^{\circ \circ}$ of~$K^\circ$.
 Precomposing~$|\cdot|^\succ$ with the natural projection $K^\circ \to K^\succ$ we obtain a valuation of~$K^\circ$ that we again denote by~$|\cdot|^\succ$.
 Explicitly for an element $a \in K^\circ$ it is given by
 \[
  |a|^\succ = \begin{cases}
               |a|	& |a| \in \Delta,	\\
               0	& \text{else}.
              \end{cases}
 \]
 We now do a similar construction in terms of seminorms.
 Remember that we defined $r = \pi(|v_0|)$.
 We define the seminorm $|\cdot|^\succ$ of $V_r$ over $(K^\circ,|\cdot|^\succ)$ as follows
 \[
  |v|^\succ := \begin{cases}
                |v|	& |v| \in |v_0| \cR_\Delta,	\\
                0	& \text{else}.
               \end{cases}
 \]
 The axioms for a seminorm (especially the triangle inequality) are satisfied because if $v \in V_r$, then $|v|$ only takes values less or equal to $|v_0| \Delta$, so $|v| \notin |v_0| \Delta$ implies $|v| < |v_0| \Delta$.
 
 We now consider the diagram
 \[
  \begin{tikzcd}
   V_r						\ar[r]	\ar[d,twoheadrightarrow]	& V_r \otimes_{K^\circ} L^\circ						\ar[d,twoheadrightarrow]	\\
   V_r/K^{\circ \circ} V_r	\ar[r]								& V_r/K^{\circ \circ}V_r \otimes_{K^\succ} L^\succ,
  \end{tikzcd}
 \]
 where all spaces are equipped with valuations respectively seminorms defined by~$|\cdot|^\succ$ and its projective tensor product seminorm.
 The vertical arrows are obtained by taking quotients by subspaces where the seminorm vanishes.
 They are thus isometric.
 The lower horizontal map is a map of seminormed vector spaces over the valued field~$K^\succ$, whose rank is one less than the rank of~$K$.
 Hence, we can assume by induction that the lower horizontal map is isometric.
 This implies that also the upper horizontal map is isometric.
 
 Finally we piece the two parts about~$|\cdot|^\circ$ and~$|\cdot|^\succ$ together.
 We have already observed that
 \[
  |v_0 \otimes 1| \in |v_0| \Delta.
 \]
 We thus have
 \[
  |v_0| = |v_0|^\succ, \qquad |v_0 \otimes 1| = |v_0 \otimes 1|^\succ
 \]
 and by the last paragraph
 \[
  |v_0 \otimes 1| = |v_0 \otimes 1|^\succ = |v_0|^\succ = |v_0|.
 \]
\end{proof}

\begin{remark}
 \cref{tensor_product_vs} also holds true with the same proof if we only require $\Gamma_K \otimes \Q = \Gamma_L \otimes \Q$.
 In addition to algebraic extensions this applies for instance to the case where~$L$ is the residue field of a point of type~$2$ or~$4$ on a rigid analytic curve over~$K$.
 In fact we would expect the same result for arbitrary extensions of valued fields but this would require a more thorough treatment of the functoriality of the construction of rangers.
 More precisely one would have to construct an embedding $\cR_{\Gamma_K} \hookrightarrow \cR_{\Gamma_L}$.
\end{remark}

\begin{corollary} \label{tensor_product_Huber_pairs}
 Let $(A,A^+) \to (B,B^+)$ be a local homomorphism of local Huber pairs such that the extension of residue fields $k_A/k_B$ is algebraic.
 \added{We assume that the value group~$\Gamma_A$ of the valuation on~$A$ has finite rank}.
 For any $\cR_{\Gamma_A}$-seminormed $A$- module~$M$ the homomorphism
 \[
  M \longrightarrow M \otimes_A B
 \]
 is isometric.
\end{corollary}

\begin{proof}
 Consider the diagram
 \[
  \begin{tikzcd}
   M		\ar[r]	\ar[d]	& M \otimes_A B				\ar[d]	\\
   M/\m_A M	\ar[r]			& M/\m_A \otimes_{k_A} k_B.
  \end{tikzcd}
 \]
 The quotient $M/\m_A M$ is naturally a seminormed $k_A$-vector space as $\m_A$ is contained in (and in fact equals) the kernel of the valuation of~$A$.
 In this way the left vertical map is isometric.
 The right hand map is then also isometric both spaces being equipped with the projective tensor product seminorm.
 Moreover, the lower horizontal map is isometric by \cref{tensor_product_vs}.
 This implies that the upper horizontal map is isometric.
\end{proof}

\subsection{The adic seminorm} \label{section_adic_seminorm}

In this subsection we consider a local Huber pair $(A,A^+)$ and equip~$A^+$ with the valuation corresponding to the closed point of $\Spa(A,A^+)$.
We let~$\Gamma$ denote the value group of this valuation.

\begin{definition}
 For a local Huber pair $(A,A^+)$ and an $A^+$-module~$M$ we define the \emph{adic seminorm} by
 \[
  |x|^{\ad} := \inf_{\substack{a^+ \in A^+ \\ x \in a^+ M}} |a^+|_A \in \cR_\Gamma.
 \]
\end{definition}

The nice thing about the adic seminorm is that it behaves well with respect to base change.
If $(A,A^+) \to (B,B^+)$ is a local homomorphism of local Huber pairs, we can equip $M \otimes_A B$ with the adic seminorm with respect to~$B$.
But if the residue field extension $k_B/k_A$ is algebraic, we can also consider the projective tensor product seminorm on $M \otimes_A B$.
To distinguish these two seminorms we will write $|x|^\otimes$ for the tensor product seminorm of $x \in M \otimes_A B$.

\begin{lemma} \label{adic_seminorm_bc}
 Let $(A,A^+) \to (B,B^+)$ be a local homomorphism of local Huber pairs with algebraic residue field extension and $M$ an~$A^+$ module equipped with the adic seminorm.
 Then the adic seminorm of
 \[
  M_B := M \otimes_{A^+} B^+
 \]
 as a $B^+$-module coincides with the projective tensor product seminorm.
\end{lemma}

\begin{proof}
 First of all note that all the seminorms showing up in the statement take values less or equal to~$1$ as they are obtained by taking an infimum of values of elements of~$A^+$ or~$B^+$.
 We first claim that for $m \in M$ and $b^+ \in B^+$ we have the inequality
 \[
  |m \otimes b^+|^{\ad} \le |m|^{\ad} \cdot |b|^+.
 \]
 In order to see this we take $a^+ \in A^+$ such that $m \in a^+M$.
 So there is $m' \in M$ such that $m = a^+ m'$.
 This implies
 \[
  m \otimes b^+ = (a^+m') \otimes b^+ = (a^+b^+)(m' \otimes 1).
 \]
 Taking the adic seminorm we obtain
 \[
  |m \otimes b^+|^{\ad} \le |a^+| \cdot |b^+| \cdot |m' \otimes 1|^{\ad} \le |a^+| \cdot |b^+|.
 \]
 This holds for all $a^+ \in A^+$ such that $m \in a^+ M$ and the claim follows.
 
 Let $x$ be an element of~$M_B$.
 We now want to show that
 \begin{equation} \label{ad_leq_tensor}
  |x|^{\ad} \le |x|^\otimes.
 \end{equation}
 To this end we choose a presentation of~$x$ as a sum of tensors:
 \[
  x = \sum_i m_i \otimes b_i^+.
 \]
 for $m_i \in M$ and $b_i^+ \in B^+$.
 Then
 \[
  |x|^{\ad} \le \max_i \{|m_i \otimes b_i^+|^{\ad}\} \le \max_i \{|m_i|^{\ad} \cdot |b_i^+|\},
 \]
 where the second inequality is justified by the claim above.
 Taking the infimum over all presentations of~$x$ as a sum of tensors we obtain~(\ref{ad_leq_tensor}).
 
 Finally we show that
 \begin{equation} \label{tensor_leq_ad}
  |x|^\otimes \le |x|^{\ad}.
 \end{equation}
 Let $b^+ \in B^+$ be an element such that $x \in b^+M_B$.
 Writing $x = b^+ x'$ for some $x' \in M_B$, we obtain
 \[
  |x|^\otimes = |b^+ x'|^\otimes \le |b^+| \cdot |x'|^\otimes \le |b^+|.
 \]
This is also true for the infimum over all such~$b^+$ and we conclude that~(\ref{tensor_leq_ad}) holds.
\end{proof}

\section{The K\"ahler seminorm} \label{section_Kaehler}

\subsection{The K\"ahler seminorm for local Huber pairs} \label{section_Kaehler_local}

Fix $(k,k^+)$ as in \cref{section_log_diff_adic}, i.e., we are in either of the following three situations:
\begin{itemize}
 \item	the residue characteristic of~$k^+$ is~$0$,
 \item	$k$ is perfect and $k^+ = k$,
 \item	$k$ is algebraically closed.
\end{itemize}
Moreover, throughout this subsection $(A,A^+)$ is a local Huber pair over $(k,k^+)$ equipped with the discrete topology.
We denote by $\m_A$ the maximal ideal of~$A$ and by~$k_A$ the residue field $k_A = A/\m_A$.
The field $k_A$ carries a valuation~$|\cdot|$ defined by the valuation ring $k_A^+ = A^+/\m_A$.
We denote its value group by~$\Gamma$.

\begin{definition}
 We define the \emph{K\"ahler seminorm}
 \[
  |\cdot|_\Omega : \Omega_A \longrightarrow \cR_\Gamma
 \]
on $\Omega_A$ by
 \[
  |\omega|_{\Omega} := \inf_{\omega = \sum f_i dg_i} \max_i \{|f_i|_A |g_i|_A\},
 \]
 where the infimum is over all representations of~$\omega$ as a finite sum $\omega = \sum_i f_i dg_i$.
\end{definition}

If the infimum is not achieved, its value is always a ranger that is not contained in~$\Gamma$:

\begin{lemma}
 Let $\Gamma$ be an abelian ordered group and $\cR_\Gamma \supseteq \Gamma$ its set of rangers.
 For a subset $M \subseteq \Gamma$ the infimum $\inf M \in \cR_\Gamma$ is contained in~$\Gamma$ if and only if~$M$ contains a minimal element.
\end{lemma}

\begin{proof}
 If~$M$ contains a minimal element~$m$, it is clear that $m = \inf M \in \Gamma$.
 Otherwise we construct for every $\gamma \in \Gamma$ with $\gamma < M$ and element $(\Gamma',\gamma') \in \cR_\Gamma$ with
 \[
  \gamma < \gamma' < M.
 \]
 Indeed, this can be achieved by taking
 \[
  \Gamma' := \Gamma \oplus \Z
 \]
 with the lexicographical ordering and
 \[
  \gamma' = (\gamma,1).
 \]
\end{proof}

This is different from the real valued case treated in \cite{Tem16}, \S~5 where it can happen that the infimum is not achieved but its value is contained in~the value group.
However, this should not be considered as a bug but as a feature as now the information on whether the infimum is achieved or not is contained in its value.

By \cref{Omega_injective} we can consider $\Omega^{\log}_{(A,A^+)}$ as an $A^+$-submodule of $\Omega_A$.

\begin{lemma} \label{unit_ball}
 We have
 \[
  \Omega^{\log}_{(A,A^+)} = \{\omega \in \Omega_A \mid |\omega|_{\Omega} \le 1\}.
 \]
\end{lemma}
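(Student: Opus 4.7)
The plan is to prove the two inclusions separately, exploiting the fact that $\Omega^{\log}_{(A,A^+)}$ sits inside $\Omega_A$ as an $A^+$-submodule by \cref{Omega_injective}, and that $\Omega^{\log}_{(A,A^+)}$ is generated as an $A^+$-module by elements of the form $da$ (with $a \in A^+$) and $d\log u$ (with $u \in A^+ \cap A^\times$), the latter being sent to $du/u \in \Omega_A$.

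For the inclusion $\Omega^{\log}_{(A,A^+)} \subseteq \{\omega : |\omega|_\Omega \le 1\}$, I first note that the unit ball is closed under addition (concatenate representations) and under multiplication by elements of $A^+$ (multiply all $f_i$'s through). It therefore suffices to check the generators: $a\, da'$ with $a, a' \in A^+$ admits the tautological representation giving $|a\,da'|_\Omega \le |a||a'| \le 1$, and $b \cdot d\log u = (b/u)\, du$ for $b \in A^+$, $u \in A^+ \cap A^\times$ admits the representation $(b/u)\, du$ with $|b/u||u| = |b| \le 1$ (since $|u| = 1$).

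For the reverse inclusion, given $\omega$ with $|\omega|_\Omega \le 1$, I would select a representation $\omega = \sum_i f_i\, dg_i$ with $\max_i |f_i||g_i| \le 1$ and analyze each term. The key case analysis, using that $A$ is local (hence an element with $|f| > 0$ is a unit) and the total log structure, runs as follows. If $f_i, g_i \in A^+$, the term is manifestly in $\Omega^{\log}_{(A,A^+)}$. If $g_i \notin A^+$, then $|g_i| > 1$ so $g_i \in A^\times$, and I rewrite $f_i\, dg_i = (f_i g_i)\, d\log g_i$ where $|f_i g_i| \le 1$ puts $f_i g_i \in A^+$. If $f_i \notin A^+$ and $g_i \in A^+$, then $f_i$ is a unit and $|f_i g_i| \le 1$, so the Leibniz identity gives $f_i\, dg_i = d(f_i g_i) - (f_i g_i)\, d\log f_i$ with $f_i g_i \in A^+$. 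The remaining case $f_i, g_i \notin A^+$ would force $|f_i||g_i| > 1$, a contradiction. A short auxiliary sub-lemma handles $d\log h \in \Omega^{\log}_{(A,A^+)}$ for any $h \in A^\times$: if $h \notin A^+$ then $h^{-1} \in A^+ \cap A^\times$, and the identity $d\log h = -d\log(h^{-1})$ in $\Omega_A$ places $d\log h$ in $\Omega^{\log}$.

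The main obstacle is the justification that $|\omega|_\Omega \le 1$ actually yields a representation with $\max_i |f_i||g_i| \le 1$ — \emph{a priori} the infimum in the definition of the K\"ahler seminorm need not be attained in the value group. I would address this either by reading $|\omega|_\Omega \le 1$ as the assertion that such a representation exists (the natural convention for non-archimedean unit balls) or by a small approximation argument using that the three cases above remain valid verbatim for representations with $\max \le \gamma$ for any $\gamma \in \Gamma_A$ satisfying $\gamma \le 1$; everything else is formal.
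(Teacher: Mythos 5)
Your proof is correct and follows essentially the same route as the paper: the forward inclusion via the tautological representation of the generators, and the reverse inclusion by a term-by-term case analysis of a representation with $\max_i |f_i|\,|g_i| \le 1$, using the Leibniz rule and the fact that elements of nonzero value are units of the local ring $A$; the only cosmetic difference is that you split cases according to $|g_i| \le 1$ versus $|g_i| > 1$, where the paper splits according to $|g_i| = 0$ versus $|g_i| > 0$ and writes $g_i = g_i'/g_i''$ with $g_i', g_i'' \in A^+ \cap A^\times$ in place of your identity $d\log g_i = -d\log(g_i^{-1})$. Two small remarks: the paper resolves your attainment worry exactly by your first option, reading $|\omega|_\Omega \le 1$ as the existence of a representation with $\max_i |f_i|\,|g_i| \le 1$; and your parenthetical ``since $|u| = 1$'' is unnecessary and need not hold for $u \in A^+ \cap A^\times$, but the computation $|b/u|\,|u| = |b| \le 1$ stands by multiplicativity of the valuation.
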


\begin{proof}
 A general element of $\Omega^{\log}_{(A,A^+)}$ can be written as a finite sum of the form
 \[
  \omega = \sum_i f_i dg_i/g_i + \sum_j a_j db_j
 \]
 with $f_i,a_j,b_j \in A^+$ and $g_i \in A^+ \setminus \m_A$.
 Its Kähler seminorm satisfies
 \[
  |\omega|_{\Omega} \le \max_{i,j}\{|f_i|,|a_j| \cdot |b_j|\} \le 1.
 \]
 Now take $\omega \in \Omega_A^1$ with $|\omega|_{\Omega} \le 1$.
 By definition there is a representation $\omega = \sum_i f_i dg_i$ with
 \[
  \max_i \{|f_i|_A |g_i|_A\} \le 1,
 \]
 i.e., $|f_i|_A |g_i|_A \le 1$ for all~$i$.
 So $f_i g_i \in A^+$.

 In case $g_i \notin \m_A$ we write $g_i = g'_i/g''_i$ with $g'_i, g''_i \in A^+ \setminus \m_A$.
 Then
 \[
  f_i dg_i = f_i g_i \frac{dg'_i}{g'_i} - f_i g_i \frac{dg''_i}{g''_i} \in \Omega^{\log}_{(A,A^+)}.
 \]
 Suppose now that $g_i \in \m_A$.
 If $f_i \in \m_A$ as well, then in particular, $f_i$ and $g_i$ are both elements of~$A^+$.
 Hence, $f_i dg_i \in \Omega_{A^+} \subset \Omega^{\log}_{(A,A^+)}$ ($\Omega_{A^+}$ can be viewed as a submodule of $\Omega^{\log}_{(A,A^+)}$ by \cref{Omega_injective}).
 Finally, if $g_i \in \m_A$ but $f_i \notin \m_A$, we write
 \[
  f_i dg_i = d(f_i g_i) - g_i df_i.
 \]
 The first term is in $\Omega^1_{A^+} \subset \Omega^{\log}_{(A,A^+)}$ and the second term in $\Omega^{\log}_{(A,A^+)}$ by the same reasoning as above.
 We conclude that $\omega \in \Omega^{\log}_{(A,A^+)}$.
\end{proof}

\begin{lemma} \label{characterization_Kaehler}
 The K\"ahler seminorm is the maximal $A$-seminorm $|\cdot|:\Omega_A \to \cR_\Gamma$ with $|\Omega_{(A,A^)}^{\log}| \le 1$ and $|dx| = 0$ for $x \in \m_A$.
\end{lemma}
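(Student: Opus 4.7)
The plan is to first verify that $|\cdot|_\Omega$ itself satisfies both stipulated properties and then to show that every other $A$-seminorm $|\cdot|'$ with these properties is dominated by $|\cdot|_\Omega$. For the first step, the inequality $|\omega|_\Omega\le 1$ on $\Omega_A^{\log}$ is half of \cref{unit_ball}, and the vanishing $|dx|_\Omega=0$ for $x\in\m$ with $|x|_A=0$ follows from the trivial representation $dx=1\cdot dx$, which gives $|dx|_\Omega\le|1|_A\cdot|x|_A=0$.

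For the second step I would exploit that $|\cdot|_\Omega$ is defined as an infimum over representations $\omega=\sum_i f_i\,dg_i$ and that the $A$-module compatibility of $|\cdot|'$ gives $|\omega|'\le\max_i|f_i|_A\cdot|dg_i|'$; the problem then reduces to proving the uniform bound $|dg|'\le|g|_A$ for every $g\in A$. I would split on the valuation $|g|_A$, which is exhaustive by the locality of $A$. If $|g|_A=1$, so that $g\in A^+\cap A^\times$ lies in the log-structure monoid, then $dg/g\in\Omega_A^{\log}$ and the hypothesis yields $|dg|'\le|g|_A$. If $|g|_A>1$, then $g^{-1}$ is a unit of $A^+$, the previous case applied to $g^{-1}$ gives $|d(g^{-1})|'\le|g^{-1}|_A$, and the Leibniz-derived identity $dg=-g^2\,d(g^{-1})$ yields $|dg|'\le|g|_A^2\cdot|g^{-1}|_A=|g|_A$. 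Finally, if $g\in\m$, the vanishing hypothesis gives $|dg|'=0\le|g|_A$. Taking the infimum over representations then produces $|\omega|'\le|\omega|_\Omega$, completing the maximality.

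The main obstacle is the case $|g|_A>1$: the log-structure hypothesis alone does not directly control $|dg|'$ for $g\in A^\times\setminus A^+$, so one must pass through the inverse using the identity $dg=-g^2\,d(g^{-1})$ to reduce to the unit case. The vanishing hypothesis on $\m$ does its essential work in the remaining case, where $g$ lies in the support of the valuation and no logarithmic differential is available; this is precisely why both conditions, rather than either one alone, are needed to pin down $|\cdot|_\Omega$.
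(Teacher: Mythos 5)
Your proposal follows the same strategy as the paper's proof: first check that $|\cdot|_\Omega$ itself satisfies the two conditions (the bound on $\Omega^{\log}_{(A,A^+)}$ via \cref{unit_ball}, the vanishing on $d\m$ directly from the definition since $|x|_A=0$ for $x\in\m$), then bound any competing $A$-seminorm $|\cdot|'$ on a representation $\omega=\sum_i f_i\,dg_i$ by establishing $|dg|'\le |g|_A$ for every $g$, using logarithmic differentials when $g\notin\m$ and the vanishing hypothesis when $g\in\m$. The one defect is your case split: the trichotomy ``$|g|_A=1$'', ``$|g|_A>1$'', ``$g\in\m$'' is not exhaustive when the valuation is nontrivial, as it omits $0<|g|_A<1$; the assertion ``$|g|_A=1$, so that $g\in A^+\cap A^\times$'' conflates the two, since the correct equivalence is $g\in A^+\cap A^\times=A^+\setminus\m$ if and only if $0<|g|_A\le 1$. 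Your argument for the first case works verbatim under this corrected hypothesis, and indeed must be stated in that generality for your second case to go through, because there $g^{-1}$ has $|g^{-1}|_A<1$ and so does not fall under ``$|g|_A=1$'' as literally written. With that one-word repair the proof is complete. For comparison, the paper handles all $g\notin\m$ at once by writing $g=g'/g''$ with $g',g''\in A^+\setminus\m$ (possible because $A$ is the localization of $A^+$ at $A^+\setminus\m$), so that $dg/g=dg'/g'-dg''/g''$ is a difference of logarithmic differentials; your inversion identity $dg=-g^2\,d(g^{-1})$ is exactly the special instance $g'=1$, $g''=g^{-1}$, so the two routes are essentially identical.
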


\begin{proof}
 We already know from \cref{unit_ball} that the K\"ahler seminorm is less or equal to one on logarithmic differentials.
 It is also clear from the definition that $|dx|_{\Omega} = 0$ for $x \in \m_A$.
 It remains to show the maximality.
 Let $|-|$ be a seminorm such that $|\Omega_A^{\log}| \le 1$ and $|dx| = 0$ for $x \in \m_A$.
 Let $\omega \in \Omega_A$ and pick a representation $\omega = \sum_i f_i dg_i$.
 For every~$i$ such that $g_i \notin \m_A$ take $g'_i, g''_i \in A^+ \setminus \m_A$ such that $g_i = g'_i/g''_i$.
 Then
 \[
 f_i dg_i = f_i g_i(\frac{dg'_i}{g'_i} - \frac{dg''_i}{g''_i}).
 \]
 For $i$ with $g_i \in \m_A$ we have $|f_i dg_i| = 0$.
 Hence, by the strong triangle inequality,
 \[
  |\omega| \le \max_{i,g_i \notin \m_A} \{|f_i g_i|_A |\frac{dg'_i}{g'_i}|,|f_i g_i|_A |\frac{dg''_i}{g''_i}|\}.
 \]
 By our assumption $|\frac{dg'_i}{g'_i}| \le 1$ and $|\frac{dg''_i}{g''_i}| \le 1$, whence
 \[
  |\omega| \le \max_{i,g_i \notin \m_A} \{|f_i|_A |g_i|_A\} = \max_i \{|f_i|_A |g_i|_A\}.
 \]
 Since this holds for all representations $\omega = \sum_i f_i g_i$, we obtain $|\omega| \le |\omega|_{\Omega}$.
\end{proof}

Next we want to study the log differentials $\Omega_{(A,A^+)}^{\log}$.
We can consider the adic seminorm on $\Omega_{(A,A^+)}^{\log}$ from \cref{section_adic_seminorm}.
On the other hand, we have an inclusion $\Omega^{\log}_{(A,A^+)} \hookrightarrow \Omega_A$ (see \cref{Omega_injective}).
We thus obtain a seminorm on $\Omega^{\log}_{(A,A^+)}$ by restricting the Kähler seminorm to $\Omega^{\log}_{(A,A^+)}$.

\begin{lemma} \label{Kaehler_adic}
 For $\omega \in \Omega^{\log}_{(A,A^+)}$ we have $|\omega|_{\Omega} = |\omega|^{\ad}$
\end{lemma}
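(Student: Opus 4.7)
I prove $|\omega|_\Omega \le |\omega|_{\ad}$ and $|\omega|_{\ad} \le |\omega|_\Omega$ separately. The first inequality is essentially formal, using that $|\cdot|_\Omega$ is an $A$-seminorm; the second converts a near-minimising representation in the infimum defining $|\omega|_\Omega$ into a divisibility witness for $|\omega|_{\ad}$ by rewriting each summand as an element of $a^+\,\Omega^{\log}_{(A,A^+)}$ for a suitable $a^+$.

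\textbf{Easy direction.} From the definition of the K\"ahler seminorm, $|a\omega|_\Omega \le |a|_A\cdot|\omega|_\Omega$ for every $a\in A$: if $\omega=\sum f_j\,dg_j$, then $a\omega=\sum (af_j)\,dg_j$ and $\max|af_j g_j|_A\le|a|_A\max|f_j g_j|_A$. Combined with \cref{unit_ball}, which gives $|\eta|_\Omega\le 1$ for every $\eta\in\Omega^{\log}_{(A,A^+)}$, a factorisation $\omega=a^+\eta$ with $a^+\in A^+$ and $\eta\in\Omega^{\log}$ forces $|\omega|_\Omega\le|a^+|_A$. Taking the infimum over all such $a^+$ yields $|\omega|_\Omega\le|\omega|_{\ad}$.

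\textbf{Hard direction.} Given a representation $\omega=\sum_i f_i\,dg_i$ in $\Omega_A$ with $c:=\max_i|f_i g_i|_A>0$ (the degenerate case $c=0$ is handled by approximation with representations of arbitrarily small positive $c$, which exist whenever the value group is non-trivial), choose $a^+\in A^+$ with $|a^+|_A=c$; I aim to show $\omega\in a^+\,\Omega^{\log}_{(A,A^+)}$, so that $|\omega|_{\ad}\le c$ and, after taking the infimum over representations, $|\omega|_{\ad}\le|\omega|_\Omega$. For each summand I distinguish cases. If $g_i\notin\m$, decompose $g_i=g'_i/g''_i$ with $g'_i,g''_i\in A^+\setminus\m$, so
\[
 f_i\,dg_i=(f_i g_i)\bigl(d\log g'_i-d\log g''_i\bigr);
\]
since $|f_i g_i|_A\le|a^+|_A$ and $a^+$ is invertible in $A$, the coefficient $f_i g_i/a^+$ lies in $A^+$, placing this summand in $a^+\,\Omega^{\log}_{(A,A^+)}$. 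If $g_i\in\m$, I apply Leibniz $f_i\,dg_i=d(f_i g_i)-g_i\,df_i$ and exploit the absorption $\m\cdot\Omega^{\log}_{(A,A^+)}\subset a^+\cdot\Omega^{\log}_{(A,A^+)}$, valid because for every $m\in\m$ the element $m/a^+$ has valuation $0$, hence lies in $A^+$, so $m\eta=a^+(m/a^+)\eta$ for any $\eta\in\Omega^{\log}$. Expanding $d(f_i g_i)=a^+\,d(f_i g_i/a^+)+(f_i g_i/a^+)\,da^+$ lands $d(f_i g_i)$ in $a^+\,\Omega^{\log}$ (the first summand obviously; the second by the absorption, since $f_i g_i/a^+\in\m$); for $g_i\,df_i$ I split on $f_i$: if $f_i\in A^\times$, write $f_i=f'_i/f''_i$ with $f'_i,f''_i\in A^+\setminus\m$, so $df_i=f_i(d\log f'_i-d\log f''_i)$ and $g_i\,df_i=(g_i f_i)(d\log f'_i-d\log f''_i)\in\m\,\Omega^{\log}$; if $f_i\in\m$, then $df_i\in\Omega_{A^+}\subset\Omega^{\log}$ and again $g_i\,df_i\in\m\,\Omega^{\log}$.

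\textbf{Main obstacle.} The delicate point is the subcase $g_i\in\m$, where $f_i\,dg_i$ admits no direct logarithmic rewriting. The whole argument hinges on $a^+$ being a unit in $A$ (which follows from $|a^+|_A=c>0$, since $A$ is local and the valuation vanishes only on~$\m$); this is what both makes $f_i g_i/a^+\in A^+$ available in the first case and yields the absorption $\m\,\Omega^{\log}\subset a^+\,\Omega^{\log}$ needed in the Leibniz decomposition of the second.
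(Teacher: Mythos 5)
Your argument is correct and is in substance the paper's, repackaged. The paper obtains $|\omega|_{\Omega}\le|\omega|_{\ad}$ exactly as in your easy direction, and gets the converse by checking that $|\cdot|_{\ad}$ satisfies the hypotheses of the maximality statement \cref{characterization_Kaehler}; you instead inline that maximality argument and upgrade it to the sharper divisibility claim that a representation $\omega=\sum_i f_i\,dg_i$ with $\max_i|f_ig_i|_A\le|a^+|_A$ and $a^+\notin\m$ forces $\omega\in a^+\Omega^{\log}_{(A,A^+)}$. The case analysis ($g_i\notin\m$ via $g_i=g_i'/g_i''$; $g_i\in\m$ via Leibniz plus the absorption $\m\,\Omega^{\log}_{(A,A^+)}\subseteq a^+\Omega^{\log}_{(A,A^+)}$) is exactly the one already used for \cref{unit_ball} and \cref{characterization_Kaehler}, so no new idea is involved, but your route has the mild advantage of never needing to regard $|\cdot|_{\ad}$ as an $A$-seminorm on all of $\Omega_A$, which a literal appeal to \cref{characterization_Kaehler} would require. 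The only soft spot is the degenerate case $c=0$: rather than approximating by representations of small positive $c$ (which does work, e.g.\ by padding a $c=0$ representation with cancelling terms $a\,db-a\,db$ of small positive $|ab|_A$), it is cleaner to observe that for $c=0$ your divisibility argument applies verbatim to every $a^+\in A^+\setminus\m$. Either way, concluding that the resulting infimum is $0$ requires the value group of $(A,A^+)$ to be nontrivial; this is the very same unstated hypothesis behind the paper's own step ``as $a^+$ was arbitrary, this implies $|dx|=0$'', and for a trivially valued local Huber pair such as $A=A^+=k[t]_{(t)}$ (where $|dt|_{\Omega}=0$ while $dt$ is divisible only by units, so $|dt|_{\ad}=1$) the statement genuinely fails --- so your explicit caveat is, if anything, more careful than the original proof.
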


\begin{proof}
 By \cref{characterization_Kaehler} it suffices to show that $|\Omega^{\log}_{(A,A^+)}|^{\ad} \le 1$, $|dx|^{\ad} = 0$ for $x \in \m_A$, and $|\omega|_{\Omega} \le |\omega|^{\ad}$ for all $\omega \in \Omega^{\log}_{(A,A^+)}$.
 The first assertion is obvious as $|\Omega^{\log}_{(A,A^+)}|^{\ad} \le |A^+|_A$.
 For the second one take $x \in \m_A$ and $a^+ \in A^+ \setminus \m_A$.
 Then~$x$ is divisible by~$a^+$ and
 \[
  dx = d(a^+ \cdot \frac{x}{a^+}) = a^+d (\frac{x}{a^+}) + \frac{x}{a^+} da^+ = a^+(d(\frac{x}{a^+}) + \frac{x}{(a^+)^2} da^+),
 \]
 i.e., $dx \in a^+\Omega^{\log}_{(A,A^+)}$.
 By the definition of the adic seminorm, this means $|dx| \le |a^+|$.
 As~$a^+$ was arbitrary, this implies $|dx| = 0$.

 Let us now prove the last assertion.
 Take $\omega \in \Omega^{\log}_{(A,A^+)}$ and $a^+ \in A^+$ with $\omega \in a^+ \Omega^{\log}_{(A,A^+)}$.
 So there is a representation $\omega = \sum_i a^+ f_i dg_i/g_i + \sum_j a^+ b_j dc_j$ with $f_i,b_j,c_j \in A^+$ and $g_i \in A^+ \setminus \m_A$.
 Then
 \[
  |\omega|_{\Omega} \le \max_{i,j} \{|a^+|_A \cdot |f_i|_A, |a^+|_A \cdot |b_j|_A \cdot |c_j|_A\} \le |a^+|_A.
 \]
 Since this holds for all $a^+ \in A^+$ with $\omega \in a^+ \Omega^{\log}_{(A,A^+)}$, we obtain
 \[
  |\omega|_{\Omega} \le |\omega|^{\ad}.
 \]
\end{proof}

For a tame extension $(A,A^+) \to (B,B^+)$ of local Huber pairs we want to study the induced map on differentials
\[
 \Omega_A \longrightarrow \Omega_B.
\]
In particular, we want to compare the Kähler seminorms on both sides.
For this purpose we consider the factorization
\begin{equation} \label{factorization_Omega}
  \Omega_A \overset{\iota}{\longrightarrow} \Omega_A \otimes_A B \overset{\psi}{\longrightarrow} \Omega_B
\end{equation}
where $\Omega_A \otimes_A B$ is equipped with the projective tensor product seminorm:
For $\omega \in \Omega_A \otimes_A B$ its value is
\[
 |\omega| = \inf_{\omega = \sum \omega_i \otimes b_i} \max_i \{|\omega_i|_\Omega \cdot |b_i|\}.
\]
We showed in \cref{tensor_product_Huber_pairs} that~$\iota$ is an isometric embedding.

\begin{proposition} \label{isometry}
 Let $(B,B^+)/(A,A^+)$ be a tame extension of local Huber pairs.
 Then
 \[
  \Omega_A \otimes_A B \overset{\sim}{\longrightarrow} \Omega_B
 \]
 is an isometry (with respect to the K\"ahler seminorm).
\end{proposition}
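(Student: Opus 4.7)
The plan is to show the two K\"ahler seminorms agree on $\Omega_B$ by directly transforming any $B$-linear representation of a differential into an $A$-linear representation of comparable norm. First I would set up the underlying isomorphism: by the $\Omega^{\log}$-analog of \cref{cotangent_tame} for local Huber pairs (applicable after reducing to the tame extension of residue valued fields via \cref{BC_cotangent} and \cref{transitivity_BC}), we have $\LL^{\log}_{(B,B^+)/(A,A^+)} \simeq 0$; hence
\[
 \Omega^{\log}_{(A,A^+)} \otimes_{A^+} B^+ \cong \Omega^{\log}_{(B,B^+)}
\]
and, after inverting, $\Omega_A \otimes_A B \cong \Omega_B$, so the map of the statement is already an isomorphism of $B$-modules. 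Write $|\cdot|_B^{\Omega}$ for the K\"ahler seminorm on $\Omega_B$ and $|\cdot|_A^{\Omega}$ for the seminorm on $\Omega_A \otimes_A B$ computed via representations $\omega = \sum_j b_j\, da_j$ with $a_j \in A$.

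The inequality $|\omega|_B^{\Omega} \le |\omega|_A^{\Omega}$ is trivial since every $A$-representation is also a $B$-representation. For the reverse, given a $B$-representation $\omega = \sum_i b_i'\, db_i''$ with $\max_i |b_i'|\,|b_i''| \le r$, I would rewrite each summand as an $A$-combination of norm at most $r$. If $|b_i''|_B = 1$, I use $db_i'' = b_i''\cdot(db_i''/b_i'')$ together with $\Omega^{\log}_{(B,B^+)} = B^+ \cdot \Omega^{\log}_{(A,A^+)}$ to express $db_i''/b_i''$ as a $B^+$-linear combination of log differentials from $A$, giving $b_i'\, db_i''$ an $A$-representation of norm at most $|b_i'|\,|b_i''| \le r$. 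If $0 < |b_i''|_B < 1$, tameness supplies an integer $n$ prime to the residue characteristic and elements $a \in A$, $u \in B^+ \setminus \m_B$ such that $(b_i'')^n = ua$ and $|a| = |b_i''|^n$. Differentiating and using $|n|_B = 1$ gives
\[
 b_i'\, db_i'' \;=\; \frac{b_i'\, b_i''}{n}\Bigl(\frac{du}{u} + \frac{da}{a}\Bigr),
\]
whose logarithmic-differential factor is rewritable via $A$-generators (again using $\Omega^{\log}_{(B,B^+)} = B^+ \cdot \Omega^{\log}_{(A,A^+)}$), yielding an $A$-representation of norm at most $|b_i'|\,|b_i''| \le r$.

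The main obstacle is organizing the tameness input: producing the decomposition $(b_i'')^n = ua$ with $n$ prime to the residue characteristic requires first reducing to a finite tame extension, using that the log cotangent complex commutes with filtered colimits (\cref{colimits_cotangent}), and then decomposing into a tower of prime-degree subextensions along the lines of the proof of \cref{cotangent_tame}. The degenerate case $|b_i''|_B = 0$ (when $b_i''$ lies in the support of the valuation on $B$) must be treated separately, e.g.\ via the identity $b_i'\, db_i'' = d(b_i' b_i'') - b_i''\, db_i'$, reducing it to a term whose coefficient has norm zero and an exact differential of an element of norm zero, both of which contribute arbitrarily little to $|\omega|_A^{\Omega}$.
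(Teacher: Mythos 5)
Your proposal agrees with the paper up to the point where the base--change isomorphisms $\Omega^{\log}_{(A,A^+)}\otimes_{A^+}B^+\cong\Omega^{\log}_{(B,B^+)}$ and $\Omega_A\otimes_AB\cong\Omega_B$ are extracted from the vanishing of $\LL_{B/A}$ and $\LL^{\log}_{(B,B^+)/(A,A^+)}$; after that the two arguments diverge. The paper never manipulates representations: it reduces the isometry of $\psi\colon\Omega_A\otimes_AB\to\Omega_B$ to the isometry of $\phi\colon\Omega^{\log}_{(A,A^+)}\otimes_{A^+}B^+\to\Omega^{\log}_{(B,B^+)}$, and the latter is immediate from \cref{Kaehler_adic}, because the K\"ahler seminorm restricted to logarithmic differentials is the adic seminorm, which is intrinsic to the module structure. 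Your route converts a $B$-representation into an $A$-representation term by term; this works and makes visible where tameness enters, but your explicit decomposition $(b_i'')^n=ua$ is not actually needed: for any $b_i''$ with $0<|b_i''|\le1$ one already has $d\log b_i''\in\Omega^{\log}_{(B,B^+)}=B^+\cdot\Omega^{\log}_{(A,A^+)}$, and for $|b_i''|>1$ (a case you omit) one replaces $b_i''$ by its inverse. The entire tameness input is thus absorbed into the surjectivity of $\phi$, exactly as in the paper, and the reduction to prime-degree towers you describe as the ``main obstacle'' is superfluous.

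The one genuinely under-justified step is the degenerate case $|b_i''|=0$. After writing $b_i'\,db_i''=d(b_i'b_i'')-b_i''\,db_i'$ you assert both terms ``contribute arbitrarily little'', but for the exact differential $dx$ with $x:=b_i'b_i''\in\m_B$ you must explain why its $A$-seminorm is zero. This requires $\m_B=\m_A B$ (unramifiedness of the \'etale extension $B/A$): writing $x=\sum_l m_l\beta_l$ with $m_l\in\m_A$ gives $dx=\sum_l\beta_l\,dm_l+\sum_l m_l\,d\beta_l$; the first terms are $A$-terms of norm $|\beta_l|\cdot|m_l|=0$, and in the second one substitutes an arbitrary $A$-representation of $d\beta_l$, whose norm is killed by $|m_l|=0$. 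The same device disposes of $b_i''\,db_i'$. With this supplied, your argument is complete.
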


\begin{proof}
 Consider the following map of distinguished triangles
 \[
  \begin{tikzcd}
   \LL^{\log}_{(A,A^+)} \otimes^h_{A^+} B^+	\ar[r]	\ar[d]	& \LL^{\log}_{(B,B^+)}	\ar[r]	\ar[d]	& \LL^{\log}_{(B,B^+)/(A,A^+)}	\ar[d]	\ar[r,"+1"]	& {} \\
   \LL_A \otimes^h_A B						\ar[r]			& \LL_B					\ar[r]			& \LL_{B/A}								\ar[r,"+1"]	& {}.
  \end{tikzcd}
 \]
 Since $B/A$ is \'etale, $\LL_{B/A} \cong 0$ (\cite{Ill71}, Proposition~3.1.1).
 Moreover, $(B,B^+)/(A,A^+)$ is tame, whence $\LL^{\log}_{(B,B^+)/(A,A^+)} \cong 0$ (see \cref{tame_local_Huber_pair_cotangent}).
 Furthermore, $B$ is flat over~$A$ and~$B^+$ is flat over~$A^+$ (see \cite{HueAd}, Proposition~11.8).
 Hence the derived tensor products are naive tensor products.
 We thus obtain a diagram
 \[
  \begin{tikzcd}
   \Omega^{\log}_{(A,A^+)} \otimes_{A^+} B^+	\ar[r,"\sim","\phi"']	\ar[d]	& \Omega^{\log}_{(B,B^+)}	\ar[d]	\\
   \Omega_A \otimes_A B							\ar[r,"\sim","\psi"']			& \Omega_B.
  \end{tikzcd}
 \]
 The vertical maps are localizations by \cref{localization_cotangent}.
 Hence, in order to show that~$\psi$ is an isometry, it suffices to show that~$\phi$ is an isometry.
 But the restriction of the K\"ahler seminorm to logarithmic differentials coincides with the adic seminorm (\cref{Kaehler_adic}) and for the adic seminorm~$\phi$ is an isometry by \cref{adic_seminorm_bc}.
\end{proof}

\begin{corollary} \label{Kaehler_equal_tame}
 Let $(A,A^+) \to (B,B^+)$ be a tame extension of local Huber pairs.
 We consider the corresponding map on differentials
 \[
  \Psi : \Omega_A \longrightarrow \Omega_B.
 \]
 Then for all $\omega \in \Omega_A$ we have
 \[
  |\omega|_{\Omega_A} = |\Psi(\omega)|_{\Omega_B}.
 \]
\end{corollary}

\begin{proof}
 The map~$\Psi$ factors as
 \[
  \Omega_A \overset{\iota}{\longrightarrow} \Omega_A \otimes_A B \overset{\psi}{\longrightarrow} \Omega_B.
 \]
 By \cref{tensor_product_Huber_pairs},~$\iota$ is an isometry and~$\psi$ is an isometry by \cref{isometry}.
\end{proof}

\subsection{The K\"ahler seminorm on adic spaces} \label{section_Kaehler_adic}

For a discretely ringed adic space~$\cX$ over $(k,k^+)$, a point $x \in \cX$, and an open neighborhood $\cU \subset \cX$ of~$x$
 we define \emph{the K\"ahler seminorm} $|-|_x$ on $\Omega_{\cX}(\cU)$ associated with~$x$ as follows.
For $\omega \in \Omega_{\cX}(\cU)$ let $\omega_x$ be the image of~$\omega$ in $\Omega_{\cX,x} = \Omega_{\cO_{\cX,x}}$.
Then
\[
 |\omega|_x := |\omega_x|_{\Omega},
\]
where $|-|_{\Omega}$ is the K\"ahler seminorm on $\Omega_{\cO_{\cX,x}}$ associated with the local Huber pair $(\cO_{\cX,x},\cO_{\cX,x}^+)$.

Recall from \cite{HueAd}, that an \'etale morphism $f : \cX \to \cY$ of adic spaces is \emph{strongly \'etale} at a point $x \in \cX$ if the residue field extension $k(x)|k(f(x))$ is unramified with respect to the valuation of $k(x)$ corresponding to~$x$.
Moreover, $f$ is \emph{tame} if $k(x)|k(f(x))$ is tamely ramified.
The tame (strongly \'etale) morphisms to~$\cX$ together with surjective families form a site~$\cX_t$ ($\cX_{\set}$), called the \emph{tame (strongly \'etale) site}.

\begin{definition}
 We define the subpresheaf~$\Omega^+$ of~$\Omega$ on~$\cX_t$ by
 \[
  \Omega^+(\cU) := \{\omega \in \Omega(\cU) \mid |\omega|_x \le 1~\forall x \in \cU\}.
 \]
\end{definition}
Notice that this construction is indeed functorial:
For $\cV \to \cU$ in~$\cX_t$, $\omega \in \Omega^+(\cU)$, and $x \in \cV$ we have
\[
 |(\omega|_{\cV})|_x = |\omega|_{f(x)} \le 1
\]
by \cref{Kaehler_equal_tame}.
By restriction, we obtain a presheaf on the topological space~$\cX$ and on the strongly \'etale site~$\cX_{\set}$, as well.
We denote all of these~$\Omega^+$.
Moreover, we set $\Omega^{n,+} := \bigwedge^n \Omega^+$.

\begin{proposition} \label{Kaeher_sheaf}
 The presheaf~$\Omega^{n+}$ is a sheaf on~$\cX_t$.
\end{proposition}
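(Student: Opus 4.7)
The presheaf $\Omega$ on $\cX_t$ is already a sheaf (differentials satisfy descent along flat \'etale morphisms, and tame morphisms are such). Since $\Omega^+$ is a subpresheaf of $\Omega$, the separation axiom for $\Omega^+$ follows immediately from the separation of $\Omega$. The content of the proposition is therefore the gluing axiom: given a tame covering $\{f_i\colon\cU_i \to \cU\}$ in $\cX_t$ and local sections $\omega_i \in \Omega^+(\cU_i)$ which agree on pairwise fiber products, one must show that the unique glued section $\omega \in \Omega(\cU)$ in fact lies in $\Omega^+(\cU)$, i.e.\ satisfies $|\omega|_x \le 1$ for every $x \in \cU$.

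To verify this pointwise condition I would pick an arbitrary $x \in \cU$, use surjectivity of the tame cover to choose some index $i$ and a point $y \in \cU_i$ with $f_i(y)=x$, and look at the induced map on stalks $(\cO_{\cU,x},\cO_{\cU,x}^+) \to (\cO_{\cU_i,y},\cO_{\cU_i,y}^+)$. Because $f_i$ is tame, this is a tame extension of local Huber pairs, so \cref{isometry} applies: the natural isomorphism $\Omega_{\cO_{\cU,x}} \otimes_{\cO_{\cU,x}} \cO_{\cU_i,y} \overset{\sim}{\to} \Omega_{\cO_{\cU_i,y}}$ is an isometry for the K\"ahler seminorms. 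Under this isomorphism the germ $\omega_x$ maps to the germ $(\omega_i)_y$, so
\[
 |\omega|_x \;=\; |\omega_x|_\Omega \;=\; |(\omega_i)_y|_\Omega \;=\; |\omega_i|_y \;\le\; 1,
\]
the last inequality being precisely the hypothesis $\omega_i \in \Omega^+(\cU_i)$. This establishes $\omega \in \Omega^+(\cU)$ and proves the sheaf property.

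The main obstacle, and really the only non-formal ingredient, is the invocation of \cref{isometry}: one has to check that the hypotheses of that proposition are met at the stalk level, which boils down to confirming that tameness of $f_i$ at $y$ translates into tameness of the residue field extension (this is true by the definition of the tame site recalled just before the statement) and that the maps $\cO_{\cU,x}^+ \to \cO_{\cU_i,y}^+$ and $\cO_{\cU,x} \to \cO_{\cU_i,y}$ are flat, which follows from the characterization of tame morphisms as flat strongly \'etale maps (so that \cref{isometry} can invoke \cite{HueAd}, Proposition~10.7 as it does). Once these stalk-level verifications are in place, the argument is entirely formal and does not require any further computation with the K\"ahler seminorm.
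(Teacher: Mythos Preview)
Your proof is correct and follows essentially the same route as the paper: use that $\Omega^+$ is a subpresheaf of the sheaf $\Omega$ to get separation, then for gluing lift a point $x\in\cU$ to some $y\in\cU_i$ and invoke \cref{isometry} on the tame extension of local Huber pairs at the stalks to conclude $|\omega|_x=|\omega_i|_y\le 1$. One small terminological slip: tame morphisms are not ``flat strongly \'etale maps'' (strongly \'etale means unramified residue field extension, which is strictly stronger than tame), but this does not affect your argument since you only need \cref{isometry} as a black box, and its sole hypothesis is tameness of the extension of local Huber pairs.
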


\begin{proof}
 For a covering $(\varphi_i:\cU_i \to \cU)$ in~$\cX_t$ consider the diagram
 \[
  \begin{tikzcd}
   0	\ar[r]	& \Omega^+(\cU)	\ar[r]	\ar[d,hook]	& \prod_i \Omega^+(\cU_i)	\ar[r]	\ar[d,hook]	& \prod_{ij} \Omega^+(\cU_i \times_{\cU} \cU_j)	\ar[d,hook]	\\
   0	\ar[r]	& \Omega(\cU)	\ar[r]				& \prod_i \Omega(\cU_i)		\ar[r]				& \prod_{ij} \Omega(\cU_i \times_{\cU} \cU_j).
  \end{tikzcd}
 \]
 The lower row is exact as~$\Omega$ is a sheaf.
 We have to show that the upper row is exact.
 Since $\Omega^+ \to \Omega$ is injective, exactness on the left hand side is clear.
 Let $(\omega_i)_i \in \prod_i \Omega^+(\cU_i)$ be such that
 \[
  \omega_i|_{\cU_i \times_{\cU} \cU_j} = \omega_j|_{\cU_i \times_{\cU} \cU_j} \quad \forall i,j.
 \]
 There is $\omega \in \Omega(\cU)$ such that $\omega|_{\cU_i} = \omega_i$ for all~$i$.
 In order to show that $\omega \in \Omega^+(\cU)$, take $x \in \cU$.
 Since $(\varphi_i : \cU_i \to \cU)$ is a covering, there is $i$ and $x_i \in \cU_i$ such that $\varphi_i(x_i) = x$.
 By \cref{Kaehler_equal_tame}
 \[
  |\omega|_x = |(\omega|_{\cU_i})|_{x_i} = |\omega_i|_{x_i} \le 1.
 \]
 Hence $\omega \in \Omega^+(\cU)$.
 The passage from~$\Omega$ to~$\Omega^n$ is straightforward.
\end{proof}

\begin{remark}
 For a morphism $\varphi: \cV \to \cU$ in~$\cX_{\et}$ that is not tame, $\omega \in \Omega(\cU)$, and $x \in \cV$, it is not true in general that $|(\omega|_{\cV})|_x = |\omega|_{\varphi(x)}$ (compare \cite{Tem16}, Theorem~5.6.4).
 We only have $|(\omega|_{\cV})|_x \le |\omega|_{\varphi(x)}$.
 So~$\Omega^+$ is a presheaf on the \'etale site but not necessarily a sheaf.
\end{remark}

\begin{proposition} \label{sheafification_logarithmic}
 Let $n \ge 1$ and~$\cX$ a (discretely ringed) adic space over $(k,k^+)$.
 As a sheaf on the topological space $\cX$, $\Omega^{n,+}$ is the sheafification of $\Omega^{n,\log}$.
 In particular, $\Omega^{n,+}$ is a subsheaf of~$\Omega^n$.
\end{proposition}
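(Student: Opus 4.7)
My plan is to construct a natural morphism of presheaves $\Omega^{n,\log} \to \Omega^{n,+}$ compatible with the embeddings into $\Omega^n$, and to verify that it induces an isomorphism on every stalk. Since $\Omega^{n,+} = \bigwedge^n \Omega^+$ is a sheaf (by \cref{Kaeher_sheaf} in degree one and the fact that $\bigwedge^n$ of a sheaf of modules is a sheaf), this will exhibit $\Omega^{n,+}$ as the sheafification of $\Omega^{n,\log}$ inside $\Omega^n$.

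For the construction, work on a strict affinoid $\cU = \Spa(A,A^+)$. Any section of $\Omega^{n,\log}_{(A,A^+)}$ is an $A^+$-linear combination of wedges $\frac{dG_1}{G_1} \wedge \cdots \wedge \frac{dG_n}{G_n}$ with $G_i \in A^+ \cap A^\times$. At every point $y \in \cU$ the germ of $dG_i/G_i$ lies in $\Omega^{\log}_{(\cO_{\cX,y},\cO^+_{\cX,y})}$, so \cref{unit_ball} gives $|dG_i/G_i|_y \le 1$ and $dG_i/G_i \in \Omega^+(\cU)$. The wedge then defines a section of $\bigwedge^n \Omega^+ = \Omega^{n,+}$ over $\cU$, and $A^+$-linearity extends this to the desired presheaf map.

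The content lies in the stalk computation at a point $x \in \cX$. Using that strict affinoids form a neighborhood basis (\cref{strict_neighborhood_basis}) together with \cref{localization_cotangent}, the presheaf stalk is
\[
 (\Omega^{n,\log})_x = \Omega^{n,\log}_{(\cO_{\cX,x},\cO^+_{\cX,x})},
\]
while, since stalks commute with wedge products of sheaves of modules,
\[
 (\Omega^{n,+})_x = \bigwedge\nolimits^n_{\cO^+_{\cX,x}} \Omega^+_x.
\]
Everything therefore reduces to the case $n = 1$: the identification $\Omega^+_x = \Omega^{\log}_{(\cO_{\cX,x},\cO^+_{\cX,x})}$ as submodules of $\Omega_{\cO_{\cX,x}}$. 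One inclusion is immediate, since any $\omega \in \Omega^+(\cU)$ has $|\omega_x|_\Omega \le 1$ and \cref{unit_ball}, applied to the local Huber pair $(\cO_{\cX,x},\cO^+_{\cX,x})$, places $\omega_x$ in the logarithmic submodule.

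The main obstacle is the reverse inclusion, which requires spreading out a logarithmic germ to a logarithmic section of $\Omega$ on some neighborhood. Given $\omega_x = \sum_i f_i\, dg_i/g_i$ with $f_i \in \cO^+_{\cX,x}$ and $g_i \in \cO^+_{\cX,x} \cap \cO^\times_{\cX,x}$, I would lift the $f_i,g_i$ to sections $F_i, G_i$ of $\cO^+_\cX$ on a strict affinoid neighborhood $\cV$ of $x$; by the openness of the unit locus of each $G_i$, I may shrink $\cV$ so that each $G_i$ is a unit of $\cO_\cX$ throughout $\cV$. The section $\omega := \sum_i F_i\, dG_i/G_i \in \Omega(\cV)$ then has germ in $\Omega^{\log}_{(\cO_{\cX,y},\cO^+_{\cX,y})}$ at every $y \in \cV$, hence K\"ahler seminorm $\le 1$ by \cref{unit_ball}, so $\omega \in \Omega^+(\cV)$ represents $\omega_x$. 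Combined with the $n = 1$ identification this gives $(\Omega^{n,+})_x = (\Omega^{n,\log})_x$; injectivity into $\Omega^n_{\cO_{\cX,x}}$ is furnished by \cref{Omega_injective}, so both stalks coincide as submodules of $\Omega^n_{\cO_{\cX,x}}$, and $\Omega^{n,+}$ is indeed the sheafification of $\Omega^{n,\log}$ inside $\Omega^n$.
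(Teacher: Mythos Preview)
Your argument is correct and follows essentially the same route as the paper: factor $\Omega^{\log}\to\Omega$ through $\Omega^{+}$, then identify the stalks via \cref{unit_ball} and conclude injectivity into $\Omega^{n}$ from \cref{Omega_injective}. Your spreading-out step is correct but not strictly necessary: once you have the presheaf map $\Omega^{\log}\to\Omega^{+}$ and the identification $(\Omega^{\log})_x=\Omega^{\log}_{(\cO_{\cX,x},\cO^+_{\cX,x})}$, the inclusion $\Omega^{\log}_{(\cO_{\cX,x},\cO^+_{\cX,x})}\subseteq\Omega^{+}_x$ is automatic, and the reverse inclusion is exactly what you obtain from \cref{unit_ball}.
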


\begin{proof}
 Let us first show the proposition for $n = 1$.
 The homomorphism $\Omega^{\log} \to \Omega$ factors through~$\Omega^+$ as for an open $\cU \subseteq \cX$,
 $\omega \in \Omega^{\log}(\cU)$ and $x \in \cU$ we have
 \[
  \omega_x \in \Omega^{\log}_{(\cO_{\cX,x},\cO^+_{\cX,x})}
 \]
 and
 \[
  \big|\Omega^{\log}_{(\cO_{\cX,x},\cO^+_{\cX,x})}|_\Omega \le 1
 \]
 by \cref{unit_ball}.
 It thus suffices to show that for all $x \in \cX$ the induced homomorphism on stalks
 \[
  \Omega^{\log}_x \to \Omega^+_x
 \]
 is an isomorphism.
 This is precisely the assertion of \cref{unit_ball}.

 For $n \ge 1$ it is clear by  definition and from the result for $n = 1$ that the sheafification of $\Omega^{n,\log}$ is~$\Omega^{n,+}$.
 It then follows from \cref{Omega_injective} that the natural homomorphism $\Omega^{n,+} \to \Omega^n$ is injective.
\end{proof}

Note that the sheafification of~$\Omega^{n,\log}$ on the topological space~$\cX$ also provides the sheafification on the strongly \'etale and on the tame site as~$\Omega^{n,+}$ is a tame sheaf.

\section{Differentials on smooth adic spaces} \label{section_differentials_smooth}

\subsection{Setup}

Recall from \cite{Hu96}, Definition~1.6.5 that a morphism $\cX \to \cY$ of adic spaces is \emph{smooth} if it is locally of finite presentation and for every morphism $\Spa(A,A^+) \to \cY$ from an affinoid adic space and every ideal $I$ of~$A$ with~$I^2 = 0$, the homomorphism
\[
 \Hom_\cY(\Spa(A,A^+),\cX) \to \Hom_\cY(\Spa(A,A^+)/I,\cX)
\]
is surjective.

We fix a perfect field~$k$ and consider discretely ringed adic spaces over $\Spa(k,k)$.
For short we will speak of adic spaces over~$k$.

A pair of schemes $(X,\bar{X})$ is called \emph{log smooth} if $X$ is an open subscheme of~$\bar{X}$
 (we implicitly take the immersion $X \to \bar{X}$ as part of the datum)
 such that the associated log structure on~$\bar{X}$ is log smooth over~$k$.
We say that $X \to \bar{X}$ is a \emph{log smooth presentation} of an adic space~$\cX$ over~$k$ if $\cX = \Spa(X,\bar{X})$ and $(X,\bar{X})$ is log smooth.
In particular, if~$\cX$ has a log smooth presentation, it is smooth.
The converse direction only holds under the assumption that resolutions of singularities exist over~$k$.

For a morphism of schemes $X \to S$ such that $\Spa(X,S)$ is a smooth adic space over~$k$, we consider the following site $(X,S)_{\log}$:
The objects are finite disjoint unions of log smooth pairs $(Y,\bar{Y})$ fitting into a diagram
\[
 \begin{tikzcd}
  Y			\ar[r]	\ar[d]	& X	\ar[d]	\\
  \bar{Y}	\ar[r]			& S
 \end{tikzcd}
\]
such that $Y \to X$ is an open immersion and $\bar{Y} \to S$ is the normalization in~$Y$ of a scheme of finite type over~$S$.
The morphisms are compatible morphisms of pairs over $(X,S)$
 (but we do not require the associated morphism of log schemes to be log smooth).
If $(X,S)$ itself is log smooth, it is a final object of $(X,S)_{\log}$.
A morphism $(Y',\bar{Y}') \to (Y,\bar{Y})$ in $(X,S)_{\log}$ is called an \emph{open immersion} if the associated morphism of log schemes is an open immersion, i.e., $\bar{Y}' \to \bar{Y}$ is an open immersion and $Y' = Y \times_{\bar{Y}} \bar{Y}'$.
We define the coverings of $(X,S)_{\log}$ to be surjective families
\[
 ((Y_i,\bar{Y}_i) \to (Y,\bar{Y}))_{i \in I}
\]
of open immersions.
In other words, the topology is the Zariski topology on~$\bar{Y}$.

On $(X,S)_{\log}$ we consider the sheaf~$\Omega^{n,\log}$ of logarithmic differentials (compare \cite{Ogus18}, Theorem~1.2.4).
It is no coincidence that the symbol~$\Omega^{n,\log}$ is the same as for the presheaf of logarithmic differentials on the site of strict affinoids studied in \cref{section_presheaf_log_diff}.
In fact for a strict affinoid $\Spa(A,A^+)$ such that $(\Spec A,\Spec A^+)$ is log smooth, we have
\[
 \Omega^{n,\log}_{(A,A^+)} = \Omega^{n,\log}(\Spec A,\Spec A^+)
\]
by construction.
Because of this compatibility the use of~$\Omega^{n,\log}$ in both situations will not cause confusion.

For an object $(Y,\bar{Y})$ of $(X,S)_{\log}$ the induced morphism
\[
 \Spa(Y,\bar{Y}) \to \Spa(X,S)
\]
is an open immersion.
We thus obtain a morphism of sites
\[
 \ell: \Spa(X,S)_{\Top} \to (X,S)_{\log}.
\]
For log smooth Huber pairs $(A,A^+)$ \cref{unit_ball} provides functorial homomorphisms
\[
 \Omega^{n,\log}(\Spec A,\Spec A^+) \longrightarrow \Omega^{n,+}_{(A,A^+)} = \ell_*\Omega^{n,+}(\Spec A,\Spec A^+).
\]
Since the log smooth pairs of the form $(\Spec A,\Spec A^+)$ form a basis of the topology of $(X,S)_{\log}$ and both~$\Omega^{n,\log}$ and $\ell_*\Omega^{n,+}$ are sheaves on $(X,S)_{\log}$, the above homomorphism extends to a homomorphism of sheaves
\[
 \varphi: \Omega^{n,\log} \to \ell_* \Omega^{n,+}.
\]
Our goal is to prove that if $\Spa(X,S)$ is smooth,~$\varphi$ is an isomorphism.
Since we do not want to use resolution of singularities, the argument is somewhat intricate.
It is inspired by \cite{HKK17}.
However, we have adapted the constructions to our situation to produce a more streamlined argument.

%
%
%
%

\subsection{Unramified sheaves} \label{section_unramified_sheaves}

\begin{definition}
 We say that a morphism of schemes $Y \to Z$ is an \emph{isomorphism in codimension one} if there is an open subscheme $U \subseteq Z$ containing all points of codimension $\le 1$ such that the base change $Y \times_Z U \to U$ is an isomorphism.
 A morphism $(Y,\bar{Y}) \to (Z,\bar{Z})$ in $(X,S)_{\log}$ is an \emph{isomorphism in codimension one} if $\bar{Y} \to \bar{Z}$ is an isomorphism in codimension one and $Y = Z \times_{\bar{Y}} \bar{Z}$.
 In this case we write $(Y,\bar{Y}) \sim_1 (Z,\bar{Z})$.
\end{definition}

In a similar way as in \cite{Morel12}, Definition~2.1, we define unramified sheaves:

\begin{definition}
 A sheaf~$\cF$ on $(X,S)_{\log}$ is called \emph{unramified} if for any open immersion $(Y',\bar{Y}') \to (Y,\bar{Y})$ in $(X,S)_{\log}$ with dense image the restriction
 \[
  \cF(Y,\bar{Y}) \to \cF(Y',\bar{Y}')
 \]
 is injective and an isomorphism if $(Y',\bar{Y}') \sim_1 (Y,\bar{Y})$ .

 A presheaf~$\cG$ on $\Spa(X,S)$ is called \emph{unramified} if $\ell_*\cG$ is an unramified sheaf.
\end{definition}

\begin{lemma}
 Let~$\cF$ be an unramified sheaf on $(X,S)_{\log}$.
 If $(Y',\bar{Y}') \to (Y,\bar{Y})$ in $(X,S)_{\log}$ induces an isomorphism $\Spa(Y',\bar{Y}') \to \Spa(Y,\bar{Y})$,
  then the restriction
  \[
   \cF(Y,\bar{Y}) \to \cF(Y',\bar{Y}')
  \]
  is an isomorphism.
\end{lemma}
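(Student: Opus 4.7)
The plan is to produce a common refinement of $(Y,\bar Y)$ and $(Y',\bar Y')$ inside $(X,S)_{\log}$ that is related to each side by an open immersion which is an isomorphism in codimension one, and then apply the unramified property of $\cF$ twice. Write $f:\bar Y' \to \bar Y$ for the underlying morphism of schemes.

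First I would extract geometric information from the $\Spa$-isomorphism. The open immersions $Y,Y' \hookrightarrow X$ together with the morphism $(Y',\bar Y')\to(Y,\bar Y)$ make $Y' \hookrightarrow Y$ an open immersion, and the assumption $\Spa(Y',\bar Y')\xrightarrow{\sim}\Spa(Y,\bar Y)$ forces $Y' = Y$ (the open subscheme $Y$ is recovered from the adic space as the locus of trivial valuations). For a codimension-one point $\eta\in\bar Y\setminus Y$, normality of $\bar Y$ makes $R_\eta := \cO_{\bar Y,\eta}$ a DVR with fraction field $k(Y)$, and the associated rank-one valuation is a point of $\Spa(Y,\bar Y)$. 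Under the given isomorphism, it corresponds to a unique point of $\Spa(Y',\bar Y')$ whose center $\eta'$ on $\bar Y'$ is a codimension-one point with $\cO_{\bar Y',\eta'}=R_\eta$ as subrings of $k(Y)$, and $f$ sends $\eta'$ to $\eta$. Thus $f$ induces isomorphisms on local rings at every codimension-one point of $\bar Y\setminus Y$, while $f$ restricts to the identity on $Y=Y'$.

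Next I would spread this out. Since $\bar Y$ and $\bar Y'$ are (normalizations of) schemes of finite type over $S$, the locus in $\bar Y$ where $f$ fails to be an isomorphism is constructible and, by the previous step, misses every codimension-one point. Hence there is an open $V\subseteq\bar Y$ containing all points of codimension $\le 1$ such that $V' := f^{-1}(V)\to V$ is an isomorphism. Setting $(W,\bar W):=(Y,V)$ and using the identification $V'\cong V$ to view it also as $(Y',V')$, we obtain a single object of $(X,S)_{\log}$ with two open immersions
\[
(W,\bar W)\hookrightarrow (Y,\bar Y), \qquad (W,\bar W)\hookrightarrow (Y',\bar Y'),
\]
both of which are isomorphisms in codimension one, i.e. $(W,\bar W)\sim_1(Y,\bar Y)$ and $(W,\bar W)\sim_1(Y',\bar Y')$.

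Finally, the unramified property of $\cF$ gives that the two restrictions $\cF(Y,\bar Y)\to\cF(W,\bar W)$ and $\cF(Y',\bar Y')\to\cF(W,\bar W)$ are both isomorphisms, and they are compatible with the original morphism $(Y',\bar Y')\to(Y,\bar Y)$ by functoriality. Composing one with the inverse of the other shows that $\cF(Y,\bar Y)\to\cF(Y',\bar Y')$ is an isomorphism.

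The main obstacle is the first step, namely converting the abstract isomorphism of adic spaces into a concrete statement about $f$ in codimension one. This uses the explicit description of points of $\Spa(Y,\bar Y)$ as valuations on $k(Y)$ centered on~$\bar Y$ from the Temkin-style construction of $\Spa(X,S)$, and in particular the fact that rank-one discrete valuations (coming from codimension-one points of the boundary) have unique centers on any normal model, so that the bijection of points forces a bijection of codimension-one boundary components together with equality of the corresponding DVRs.
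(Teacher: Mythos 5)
Your overall strategy is the paper's: pass to an open $(Y,\bar V)\hookrightarrow(Y,\bar Y)$ whose image contains all points of codimension $\le 1$, lift it into $\bar Y'$, and apply unramifiedness. (The paper gets your first two steps more cheaply: the $\Spa$-isomorphism forces $Y'\cong Y$ and $\bar Y'\to\bar Y$ proper birational, and since $\bar Y$ is normal the exceptional locus in $\bar Y$ automatically has codimension $\ge 2$; your valuation-theoretic derivation of the same fact is fine but not needed.) However, your last step has a genuine error: you claim that $(W,\bar W)\hookrightarrow(Y',\bar Y')$ is also an isomorphism in codimension one, and this is false in general. The morphism $\bar Y'\to\bar Y$ is only proper birational, so it may contract divisors: take $\bar Y'$ to be the blowup of $\bar Y$ at a closed point $p\in\bar Y\setminus Y$ (which does induce an isomorphism $\Spa(Y,\bar Y')\xrightarrow{\sim}\Spa(Y,\bar Y)$ by the valuative criterion). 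Then $V=\bar Y\setminus\{p\}$ contains all codimension-$\le 1$ points of $\bar Y$, but $f^{-1}(V)=\bar Y'\setminus E$ misses the generic point of the exceptional divisor $E$, which has codimension one in $\bar Y'$. So $(W,\bar W)\not\sim_1(Y',\bar Y')$, and the unramified property only gives you that $\cF(Y',\bar Y')\to\cF(W,\bar W)$ is \emph{injective} (it is a dense open immersion), not that it is an isomorphism; composing with an inverse is not available.

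The repair is a short diagram chase, and it is exactly how the paper concludes. By functoriality the composite
\[
\cF(Y,\bar Y)\longrightarrow\cF(Y',\bar Y')\longrightarrow\cF(W,\bar W)
\]
is the restriction along $(W,\bar W)\sim_1(Y,\bar Y)$, hence an isomorphism. Since the second map is injective and the composite is surjective, the second map is an isomorphism, and therefore so is the first. With this adjustment your argument is complete and coincides with the paper's proof.
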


\begin{proof}
 The morphism $\Spa(Y',\bar{Y}') \to \Spa(Y,\bar{Y})$ is an isomorphism if and only if $Y' \cong Y$ and $\bar{Y}' \to \bar{Y}$ is proper birational.
 Since $(Y,\bar{Y})$ is log smooth, $\bar{Y}$ is normal.
 Hence, the exceptional locus of $\bar{Y}' \to \bar{Y}$ in~$\bar{Y}$ is of codimension $\ge 2$.
 In other words, its complement $\bar{V} \subset \bar{Y}$ contains all points of codimension $\le 1$.
 By construction $Y \subseteq \bar{V}$ and the open immersion $\bar{V} \to \bar{Y}$ lifts to an open immersion $\bar{V} \to \bar{Y}'$.
 We thus obtain a diagram
 \[
  \begin{tikzcd}
												& (Y',\bar{Y}')	\ar[dd]	\\
   (Y,\bar{V})	\ar[ur,open]	\ar[dr,open]	&	\\
												& (Y,\bar{Y}).
  \end{tikzcd}
 \]
 The diagonal arrows are open immersions with dense image and the image of the lower one contains all points of codimension $\le 1$.
 Applying~$\cF$ yields
 \[
  \begin{tikzcd}
					& \cF(Y',\bar{Y}')	\ar[dl,hook]	\\
   \cF(Y,\bar{V})	&	\\
					& \cF(Y,\bar{Y}).	\ar[ul,"\sim"']	\ar[uu]
  \end{tikzcd}
 \]
 Since~$\cF$ is unramified, the lower diagonal arrow is an isomorphism and the upper one is injective.
 Hence, the vertical arrow is an isomorphism (and the upper diagonal one as well).
\end{proof}

For an open subset~$\cU$ of $\Spa(X,S)$ we define the following full subcategory $\cU_{\log}$ of $(X,S)_{\log}$.
Its objects are the objects $(Z,\bar{Z})$ of $(X,S)_{\log}$ such that the morphism $\Spa(Z,\bar{Z}) \to \Spa(X,S)$ induced by the structure morphism factors through $\cU$.
Obviously, for $\cU' \subseteq \cU$ we have $\cU'_{\log} \subseteq \cU_{\log}$.

In case $\cU = \Spa(Y,\bar{Y})$, all objects $(Z,\bar{Z})$ of $(X,S)_{\log}$ with a morphism $(Z,\bar{Z}) \to (Y,\bar{Y})$ are in $\Spa(Y,\bar{Y})_{\log}$.
But $\Spa(Y,\bar{Y})_{\log}$ might be bigger.
For instance, if $(Y,\bar{Y}) \to (Z,\bar{Z})$ is a morphism in $(X,S)_{\log}$ such that $\bar{Y} \to \bar{Z}$ is proper and not an isomorphism and $Z = Y$, then $(Z,\bar{Z}) \in \Spa(Y,\bar{Y})_{\log}$ but there is no morphism $(Z,\bar{Z}) \to (Y,\bar{Y})$.
Only in the affine case we have the following lemma:

\begin{lemma} \label{final_object}
 Let $(A,A^+)$ be log smooth.
 Then $(\Spec A,\Spec A^+)$ is a final object of $\Spa(A,A^+)_{\log}$.
\end{lemma}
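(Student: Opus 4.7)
The plan is to verify first that $(\Spec A,\Spec A^+)$ is actually an object of $\Spa(A,A^+)_{\log}$, and then to construct for every other object $(Z,\bar{Z})$ a unique morphism $(Z,\bar{Z}) \to (\Spec A,\Spec A^+)$ by reducing to the affine situation via strict affinoid covers.

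That $(\Spec A,\Spec A^+)$ lies in $\Spa(A,A^+)_{\log}$ follows from the log smoothness hypothesis together with the compatibility of structure morphisms inside the ambient $\Spa(X,S)$: the map $\Spec A \to X$ is the open immersion identifying $\Spec A$ with the preimage of $\Spa(A,A^+) \subseteq \Spa(X,S)$ under the support map, the map $\Spec A^+ \to S$ inherits the required presentation as the normalization in $\Spec A$ of a finite-type $S$-scheme, and $\Spa(\Spec A,\Spec A^+) = \Spa(A,A^+)$ tautologically factors through itself.

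For the existence of a morphism $(Z,\bar{Z}) \to (\Spec A,\Spec A^+)$ I would argue locally. Using \cref{strict_neighborhood_basis} I cover $\bar{Z}$ by strict affine opens $\Spec B_i^+$ with $\Spec B_i = Z \cap \Spec B_i^+$, so that the $\Spa(B_i,B_i^+)$ form an open cover of $\Spa(Z,\bar{Z})$ inside $\Spa(A,A^+)$. Each inclusion $\Spa(B_i,B_i^+) \hookrightarrow \Spa(A,A^+)$ of strict affinoid adic spaces is induced by a uniquely determined morphism of Huber pairs $(A,A^+) \to (B_i,B_i^+)$, which geometrically yields a morphism of pairs $(\Spec B_i,\Spec B_i^+) \to (\Spec A,\Spec A^+)$ compatible with the structure morphisms to $(X,S)$. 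To glue, I observe that the morphism $Z \cap \Spec B_i^+ \to \Spec A$ is already uniquely determined by $Z \to X$ and the openness of $\Spec A \hookrightarrow X$, hence coincides with its analogue for any other index $j$ on overlaps; the lifted morphism on the $\Spec B_i^+$'s is determined by its restriction to the dense open $\Spec B_i$ via normality of $\bar{Z}$ and separatedness of $\Spec A^+$, so the local morphisms agree on pairwise intersections and glue to a global $(Z,\bar{Z}) \to (\Spec A,\Spec A^+)$. Uniqueness of this global morphism follows by the same local argument applied to any two competitors.

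The main obstacle is the careful functorial translation between the open inclusion $\Spa(B_i,B_i^+) \hookrightarrow \Spa(A,A^+)$ and a concrete morphism of Huber pairs in the correct direction, together with the verification that this morphism is compatible with the structure maps to the ambient $(X,S)$; this is essentially the functoriality of $\Spa$ on strict affinoids, but it needs to be set up correctly. A secondary subtlety is the existence of the extension $\Spec B_i^+ \to \Spec A^+$ of the map $\Spec B_i \to \Spec A$, which relies on $\bar{Z}$ being normal and presented as a normalization of a finite-type $S$-scheme, interacting compatibly with the log smooth structure of $(\Spec A,\Spec A^+)$.
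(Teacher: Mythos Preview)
Your approach is correct in outline but takes a detour that the paper avoids entirely. You pass to a strict affine cover $\{\Spec B_i^+\}$ of~$\bar{Z}$, produce Huber-pair maps $(A,A^+)\to(B_i,B_i^+)$ from the open immersions $\Spa(B_i,B_i^+)\hookrightarrow\Spa(A,A^+)$, and then glue; the gluing then requires the normality and density arguments you flag as the ``main obstacle'' and ``secondary subtlety''.

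The paper instead applies the same adjunction \emph{globally}. For $\cY=\Spa(Y,\bar{Y})$ one has $\cO_\cY(\cY)=\cO_Y(Y)$ and $\cO_\cY^+(\cY)=\cO_{\bar{Y}}(\bar{Y})$, and Huber's result (\cite{Hu94}, Proposition~2.1) gives
\[
 \Hom(\cY,\Spa(A,A^+))\ \cong\ \Hom\big((A,A^+),(\cO_Y(Y),\cO_{\bar{Y}}(\bar{Y}))\big).
\]
The open immersion $\cY\hookrightarrow\Spa(A,A^+)$ therefore yields ring maps $A\to\cO_Y(Y)$ and $A^+\to\cO_{\bar{Y}}(\bar{Y})$, and the universal property of $\Spec$ turns these directly into scheme morphisms $Y\to\Spec A$, $\bar{Y}\to\Spec A^+$ fitting into the required square. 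No cover, no gluing, and no appeal to normality of~$\bar{Z}$ is needed. In effect, the key step you invoke locally (``the inclusion of strict affinoids is induced by a uniquely determined morphism of Huber pairs'') already works with $\cY$ in place of $\Spa(B_i,B_i^+)$, because only the \emph{target} $\Spa(A,A^+)$ needs to be affinoid.

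What your version buys is independence from the identification $\cO_\cY^+(\cY)=\cO_{\bar{Y}}(\bar{Y})$, at the price of the gluing argument; the paper's version buys a two-line proof, at the price of invoking that identification. Either way the log smoothness hypothesis is used only, as you correctly note, to place $(\Spec A,\Spec A^+)$ inside the category.
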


\begin{proof}
 Let $(Y,\bar{Y})$ be an object of $\Spa(A,A^+)_{\log}$ and set $\cY = \Spa(Y,\bar{Y})$.
 Then $\cO_\cY(\cY) = \cO_Y(Y)$ and $\cO_\cY^+(\cY) = \cO_{\bar{Y}}(\bar{Y})$.
 By \cite{Hu94}, Proposition~2.1 there is a natural isomorphism
 \[
  \Hom((A,A^+),(\cO_\cY(\cY),\cO_\cY^+(\cY))) \cong \Hom(\cY,\Spa(A,A^+)).
 \]
 We thus obtain ring homomorphisms $A \to \cO_Y(Y)$ and $A^+ \to \cO_{\bar{Y}}(\bar{Y})$.
 By functoriality they fit into a commutative diagram
 \[
  \begin{tikzcd}
   \cO_Y(Y)							& A		\ar[l]			\\
   \cO_{\bar{Y}}(\bar{Y})	\ar[u]	& A^+.	\ar[l]	\ar[u]
  \end{tikzcd}
 \]
 The characterization of morphisms to affine schemes by homomorphisms of global sections of the structure sheaves yields a commutative diagram of schemes
 \[
  \begin{tikzcd}
   Y		\ar[r]	\ar[d]	& \Spec A	\ar[d]	\\
   \bar{Y}	\ar[r]			& \Spec A^+.
  \end{tikzcd}
 \]
 This defines a morphism $(Y,\bar{Y}) \to (\Spec A,\Spec A^+)$ in $\Spa(A,A^+)_{\log}$.
\end{proof}

%

\begin{lemma} \label{strict_factorization}
 Let $\Spa(Y,\bar{Y}) \subset \Spa(X,S)$ be open coming from a diagram of schemes
 \[
  \begin{tikzcd}
   Y		\ar[r]	\ar[d]	& X			\ar[d]	\\
   \bar{Y}	\ar[r]			& S.
  \end{tikzcd}
 \]
 Moreover, let $(Z,\bar{Z}) \in \Spa(Y,\bar{Y})_{\log}$.
 Then there is an open subscheme $\bar{U} \subseteq \bar{Z}$ isomorphic in codimension one, containing~$Z$, and such that $(Z,\bar{U}) \to (X,S)$ factors through $(Y,\bar{Y})$.
\end{lemma}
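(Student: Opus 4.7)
The plan is to construct $\bar U$ pointwise at the codimension-one points of $\bar Z$ via valuative extensions, then glue them using the separatedness of $\bar Y \to S$.

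First I would observe that the hypothesis $(Z,\bar{Z}) \in \Spa(Y,\bar{Y})_{\log}$ already forces $Z \subseteq Y$: every point $z \in Z$ together with the trivial valuation on $k(z)$ yields a point of $\Spa(Z,\bar Z)$ whose image in $\Spa(X,S)$ must lie in the open subspace $\Spa(Y,\bar Y)$, hence $z \in Y$. After this reduction, constructing $\bar U$ amounts to extending the tautological morphism $Z \hookrightarrow Y \hookrightarrow \bar Y$ to an open subscheme of $\bar Z$ that picks up every codimension-one point of $\bar Z$ (and is automatically an isomorphism in codimension one).

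For each codimension-one point $\bar z$ of $\bar Z$ not lying in $Z$, the local ring $\cO_{\bar Z,\bar z}$ is a discrete valuation ring, because $\bar Z$ is regular by log smoothness; after treating the irreducible components of $\bar Z$ separately, its fraction field is $k(\bar Z) = k(Z)$, and the associated valuation defines a point of $\Spa(Z,\bar Z)$. By hypothesis its image in $\Spa(X,S)$ lies in $\Spa(Y,\bar Y)$, so the valuation admits a center in $\bar Y$; this produces a unique $S$-extension $\Spec \cO_{\bar Z,\bar z} \to \bar Y$ of the generic morphism $\Spec k(\bar Z) \to Y$. Using that $\bar Y \to S$ is locally of finite type and that $\bar Z$ is locally Noetherian, I would then spread this pointwise extension out to an $S$-morphism $\bar V_{\bar z} \to \bar Y$ on some open neighborhood $\bar V_{\bar z}$ of $\bar z$ in $\bar Z$, by picking an affine open of $\bar Y$ around the image of the closed point and lifting finitely many generators from $\cO_{\bar Z,\bar z}$ to $\cO_{\bar Z}(\bar V_{\bar z})$.

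Finally, I would set
\[
 \bar U := Z \cup \bigcup_{\bar z} \bar V_{\bar z},
\]
with $\bar z$ ranging over codimension-one points of $\bar Z \setminus Z$. This is an open subscheme of $\bar Z$ containing $Z$ and every codimension-one point, so $\bar U \hookrightarrow \bar Z$ is an isomorphism in codimension one. The morphisms $Z \to \bar Y$ and $\bar V_{\bar z} \to \bar Y$ all coincide with the generic morphism on the generic point(s) of $\bar Z$, so they agree on overlaps by separatedness of $\bar Y \to S$ (which holds because $\bar Y$ is the normalization of a finite-type $S$-scheme) and glue to a morphism $\bar U \to \bar Y$ over $S$; this is the required factorization of $(Z,\bar U) \to (X,S)$ through $(Y,\bar Y)$. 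The main obstacle I anticipate is the careful interplay between the separatedness and the local finite-type hypotheses on $\bar Y \to S$, which are needed simultaneously for the uniqueness of the valuative extension, for the spread-out step, and for the gluing to be unambiguous.
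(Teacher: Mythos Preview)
Your argument is correct, with one small slip: log smoothness over~$k$ does not make $\bar Z$ regular in general (toric singularities are allowed), only normal. But that is all you need, since normality already gives that the local ring at a codimension-one point is a discrete valuation ring, so your valuative argument goes through unchanged.

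Your route is genuinely different from the paper's. The paper first replaces $\bar Y$ by a compactification of $Y$ over $\bar Y$, so that $\bar Y \to S$ becomes proper; then the composite $Z \hookrightarrow Y \hookrightarrow \bar Y$ is viewed as a rational map $\bar Z \dashrightarrow \bar Y$, and the standard fact that a rational map from a normal scheme to a proper target is defined outside a closed subset of codimension $\ge 2$ immediately yields the desired open $\bar U$. In other words, the paper hides the valuative criterion inside a single citation about indeterminacy loci. Your approach avoids the compactification step entirely by exploiting the adic-space hypothesis directly: the discrete valuation at each codimension-one boundary point of $\bar Z$ is literally a point of $\Spa(Z,\bar Z) \subseteq \Spa(Y,\bar Y)$, hence already has a center on $\bar Y$, and you then spread out and glue by hand. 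The paper's argument is shorter; yours is more self-contained and makes transparent exactly where the inclusion $\Spa(Z,\bar Z)\subseteq\Spa(Y,\bar Y)$ is used.
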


\begin{proof}
 Replacing, if necessary, $\bar{Y}$ with a compactification of~$Y$ over~$\bar{Y}$, we may assume that $Y \to \bar{Y}$ is an open immersion with dense image.
 The morphism $\Spa(Z,\bar{Z}) \to \Spa(Y,\bar{Y})$ provides an open immersion $\varphi: Z \to Y$ and a birational map $\bar{\varphi}: \bar{Z} \to \bar{Y}$.
 Since~$\bar{Z}$ is normal, $\bar{\varphi}$ is defined over an open subscheme $\bar{U} \subseteq \bar{Z}$ containing all points of codimension $\le 1$.
 Moreover, we may assume that~$\bar{U}$ contains~$Z$.
 By construction $(Z,\bar{U}) \to (X,S)$ factors through $(Y,\bar{Y})$.
\end{proof}

We want to remind the reader of the concept of Riemann-Zariski morphisms (see \cite{HueSch20}).
A point~$x$ of an adic space~$\cX$ is called \emph{Riemann-Zariski}, if it has no nontrivial horizontal specialization.
A \emph{Riemann-Zariski morphism} is a morphism of adic spaces mapping Riemann-Zariski points to Riemann-Zariski points.
Let us now consider morphisms of adic spaces $\Spa(Y,T) \to \Spa(X,S)$ arising from diagrams of schemes
\[
 \begin{tikzcd}
  Y	\ar[r]	\ar[d]	& X	\ar[d]	\\
  T	\ar[r]			& S.
 \end{tikzcd}
\]
The above diagram is said to have \emph{universally closed diagonal} if the induced morphism $Y \to X \times_S T$ is universally closed.
In this case the morphism $\Spa(Y,T) \to \Spa(X,S)$ is Riemann-Zariski and the converse holds if~$Y$ is quasi-compact and all residue field extensions of $Y \to X$ are algebraic (see \cite{HueSch20}, Lemma~12.7).
In case~$S$ is integral, $Y$ is quasi-compact, and $X \to S$ and $Y \to X$ (and hence also $Y \to T$) are open immersions with dense image, being Riemann-Zariski is equivalent to $Y \cong X \times_S T$.

\begin{lemma} \label{find_open_codimension1}
 Let $(Y,\bar{Y})$ be in $(X,S)_{\log}$.
 Let $(\Spa(Y_i,\bar{Y}_i) \to \Spa(Y,\bar{Y}))_{i \in I}$ be a finite Riemann-Zariski covering coming from a diagram of schemes
 \[
  \begin{tikzcd}
   Y_i		\ar[r]	\ar[d]	& Y		\ar[d]	\\
   \bar{Y}_i	\ar[r]		& \bar{Y}
  \end{tikzcd}
 \]
 with universally closed diagonal.
 Moreover, we take $(Z,\bar{Z}) \in \Spa(Y,\bar{Y})_{\log}$.
 Then there is an open immersion of the form $(Z,\bar{U}) \to (Z,\bar{Z})$ which is an isomorphism in codimension one such that
 \begin{itemize}
  \item $(Z,\bar{U}) \to (X,S)$ factors through $(Y,\bar{Y})$ and
  \item setting $Z_i = Z \times_Y Y_i$ and $\bar{U}_i = \bar{U} \times_{\bar{Y}} \bar{Y}_i$, the family $((Z_i,\bar{U}_i) \to (Z,\bar{U}))_{i \in I}$ is a covering in $(X,S)_{\log}$.
 \end{itemize}
\end{lemma}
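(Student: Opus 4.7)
The plan is to first use \cref{strict_factorization} to reduce to the case where we already have a morphism of pairs $(Z,\bar Z) \to (Y,\bar Y)$, and then to construct $\bar U$ by covering the codimension-one points of $\bar Z$ one at a time using the Riemann–Zariski covering property.

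First, apply \cref{strict_factorization} to $(Z,\bar Z) \in \Spa(Y,\bar Y)_{\log}$ to obtain an open subscheme $\bar V \subseteq \bar Z$ that is an isomorphism in codimension one, contains $Z$, and is such that $(Z,\bar V) \to (X,S)$ factors through $(Y,\bar Y)$. Any open $\bar U \subseteq \bar V$ satisfying the conclusion of the lemma for $(Z,\bar V)$ also satisfies it for $(Z,\bar Z)$, so I may replace $\bar Z$ by $\bar V$ and assume that a morphism of pairs $(Z,\bar Z) \to (Y,\bar Y)$ is given. Setting $Z_i := Z \times_Y Y_i$ and $\tilde W_i := \bar Z \times_{\bar Y} \bar Y_i$, the stability of Riemann–Zariski morphisms under base change discussed just before the lemma yields a Riemann–Zariski covering $(\Spa(Z_i,\tilde W_i) \to \Spa(Z,\bar Z))_{i\in I}$.

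Now let $\zeta \in \bar Z \setminus Z$ be a point of codimension $\le 1$. Since $\bar Z$ is normal (being log smooth), $\mathcal{O}_{\bar Z,\zeta}$ is a discrete valuation ring and the associated rank-one valuation on the function field of $Z$ defines a Riemann–Zariski point $x_\zeta \in \Spa(Z,\bar Z)$. By the Riemann–Zariski covering property, $x_\zeta$ lifts to some $\Spa(Z_{i(\zeta)},\tilde W_{i(\zeta)})$; invoking the universally-closed characterization of Riemann–Zariski morphisms and the uniqueness of centers of valuations on normal schemes, the lifted point has the same residue field as $\zeta$ and its local ring maps isomorphically onto $\mathcal{O}_{\bar Z,\zeta}$, so that $\tilde W_{i(\zeta)} \to \bar Z$ restricts to an open immersion on a Zariski-open neighborhood $\bar V_\zeta$ of $\zeta$. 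By quasi-compactness of the codimension-one locus of $\bar Z$, finitely many such neighborhoods together with $Z$ suffice to cover all codimension $\le 1$ points of $\bar Z$. Define $\bar U$ to be their union: it is open in $\bar Z$, contains $Z$, and is an isomorphism in codimension one; each fiber product $\bar U_i := \bar U \times_{\bar Y} \bar Y_i$ is an open subscheme of $\bar U$ by construction; and the family $((Z_i,\bar U_i) \to (Z,\bar U))_{i \in I}$ is jointly surjective, hence a covering in $(X,S)_{\log}$.

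I expect the main obstacle to be the codimension-one analysis, in which one passes from the abstract existence of a Riemann–Zariski lift of the rank-one valuation at $\zeta$ to an honest Zariski-open neighborhood on which the scheme-theoretic base change $\tilde W_{i(\zeta)} \to \bar Z$ becomes an open immersion. This requires combining normality of $\bar Z$ with the universally-closed description of Riemann–Zariski morphisms and with the fact that the lift has trivial residue-field extension; one may additionally need to pass from $\tilde W_{i(\zeta)}$ to its normalization in $Z_{i(\zeta)}$ to ensure that the required fiber product in the statement really coincides with the constructed open subscheme of $\bar U$.
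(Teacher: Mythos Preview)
Your reduction via \cref{strict_factorization} matches the paper's first step, but the subsequent construction of $\bar U$ has a genuine gap. Recall that a covering in $(X,S)_{\log}$ is a surjective family of \emph{open immersions}; thus you need every $\bar U_i \to \bar U$ to be an open immersion of schemes, not just one of them at each point. Your construction picks, for each codimension-one point $\zeta$, a single index $i(\zeta)$ and an open $\bar V_\zeta$ over which $\tilde W_{i(\zeta)} \to \bar Z$ is an open immersion, and then sets $\bar U = Z \cup \bigcup_\zeta \bar V_\zeta$. But for $j \ne i(\zeta)$ there is no reason that $\tilde W_j \times_{\bar Z} \bar V_\zeta \to \bar V_\zeta$ is an open immersion: the fibre product $\tilde W_j = \bar Z \times_{\bar Y} \bar Y_j$ need not even be irreducible, and over $\bar V_\zeta$ it may acquire extra components or fail to be unramified. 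So your sentence ``each fiber product $\bar U_i := \bar U \times_{\bar Y} \bar Y_i$ is an open subscheme of $\bar U$ by construction'' is exactly the point that is not justified; the Riemann--Zariski lift only tells you that \emph{some} index works at $\zeta$, not all of them.

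The paper reverses the quantifiers. Using only that $\bar V$ is normal and that each $\bar V_i \to \bar V$ is separated, of finite type, and an isomorphism over $Z$, one shows that for \emph{each} fixed $i$ the locus in $\bar V$ over which $\bar V_i \to \bar V$ fails to be an open immersion has codimension $\ge 2$; removing it gives an open $\bar W_i \subseteq \bar V$ containing $Z$ and all codimension-one points. Then $\bar U := \bigcap_i \bar W_i$ (a finite intersection) automatically makes every $\bar U_i \to \bar U$ an open immersion, and surjectivity is deduced afterwards from the fact that the adic family $(\Spa(Z_i,\bar U_i) \to \Spa(Z,\bar U))$ is the pullback of the original covering. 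Your Riemann--Zariski lifting argument is therefore not needed for the open-immersion part; it would at best re-prove surjectivity, which the paper gets for free.
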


\begin{proof}
 Using \cref{strict_factorization} we find an open subscheme $\bar{V} \subseteq \bar{Z}$ containing~$Z$ and isomorphic in codimension one such that $(Z,\bar{V}) \to (X,S)$ factors through $(Y,\bar{Y})$.
 Set $Z_i = Z \times_Y Y_i = Z \cap Y_i$ and $\bar{V}_i = \bar{V}_i \times_{\bar{Y}} \bar{Y}_i$.
 Since~$\bar{V}$ is normal, for each~$i$ the morphism $\bar{V}_i \to \bar{V}$ is an open immersion when restricted to a suitable open subscheme of~$\bar{V}$ isomorphic in codimension one and containing~$Z$ (use the Riemann Zariski property).
 Denote by~$\bar{U}$ the intersection of all of these subschemes for all~$i$.
 Setting $\bar{U}_i = \bar{U} \times_{\bar{V}} \bar{V}_i$ we obtain a diagram
 \[
  \begin{tikzcd}
   (Z_i,\bar{U}_i)	\ar[r,open]				\ar[d]	& (Z_i,\bar{V}_i)	\ar[r]	\ar[d]					& (Y_i,\bar{Y}_i)	\ar[d]	\\
   (Z,\bar{U})		\ar[r,open,"\sim_1"']			& (Z,\bar{V})		\ar[r]	\ar[d,open,"\sim_1"]	& (Y,\bar{Y})	\\
													& (Z,\bar{Z}).
  \end{tikzcd}
 \]
 All required properties of~$\bar{U}$ are clear except maybe that $(\bar{U}_i \to \bar{U})_{i \in I}$ is a surjective family.
 But by construction the family $(\Spa(Z_i,\bar{U}_i) \to \Spa(Z,\bar{U}))_{i \in I}$ is the pullback of the covering $(\Spa(Y_i,\bar{Y}_i) \to \Spa(Y,\bar{Y}))_{i \in I}$ by $\Spa(Z,\bar{U}) \to \Spa(Y,\bar{Y})$.
 In particular, it is surjective.
 This implies that $(\bar{U}_i \to \bar{U})_{i \in I}$ has to be surjective.
\end{proof}

\begin{definition}
 For an unramified sheaf~$\cF$ on $(X,S)_{\log}$ we define a presheaf~$\cF_{\lim}$ on $\Spa(X,S)$ as follows:
 \[
  \cF_{\lim}(\cU) = \lim_{(Y,\bar{Y}) \in \cU_{\log}} \cF(Y,\bar{Y}).
 \]
\end{definition}

We want to emphasise that in the above definition we are taking a limit and not a colimit.
The presheaf $\cF_{\lim}$ is not related to the pullback $\ell^*\cF$.
Notice moreover, that the definition of~$\cF_{\lim}$ is indeed functorial:
For open subsets $\cU' \subseteq \cU$ in $\Spa(X,S)$ we need a restriction $\cF_{\lim}(\cU) \to \cF_{\lim}(\cU')$ in
\[
 \Hom(\cF_{\lim}(\cU),\cF_{\lim}(\cU')) = \lim_{(Y',\bar{Y}')} \colim_{(Y,\bar{Y})}\Hom(\cF(Y,\bar{Y}),\cF(Y',\bar{Y}')).
\]
In other words, we have to find for each $(Y',\bar{Y}')$ in $\cU'_{\log}$ a $(Y,\bar{Y})$ in $\cU_{\log}$ and define a homomorphism
 \[
  \cF(Y,\bar{Y}) \to \cF(Y',\bar{Y}').
 \]
Moreover, these homomorphisms need to be compatible.
But for given $(Y',\bar{Y}')$ we can just take $(Y,\bar{Y}) = (Y',\bar{Y}')$ and the identity homomorphism on $\cF(Y',\bar{Y}')$.
This is clearly functorial.

\begin{lemma} \label{restrictions_injective}
 Let~$\mathcal{F}$ be an unramified sheaf on $(X,S)_{\log}$.
 Then for all dense open subspaces $\cU' \subseteq \cU \subseteq \Spa(X,S)$ the restriction
 \[
  \cF_{\lim}(\cU) \to \cF_{\lim}(\cU')
 \]
 is injective.
\end{lemma}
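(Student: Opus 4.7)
My plan is to unwind the definition of the limit and use the unramified property of $\cF$ to propagate vanishing. Let $(s)=(s_{(Y,\bar Y)})_{(Y,\bar Y)\in\cU_{\log}}\in\cF_{\lim}(\cU)$ map to zero in $\cF_{\lim}(\cU')$, i.e.\ $s_{(Y,\bar Y)}=0$ whenever $(Y,\bar Y)\in\cU'_{\log}$. I want to show $s_{(Y,\bar Y)}=0$ for every $(Y,\bar Y)\in\cU_{\log}$.

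Fix such a $(Y,\bar Y)$. First, using the sheaf axiom for $\cF$ on the Zariski-type site $(X,S)_{\log}$, I may refine $(Y,\bar Y)$ by an open cover and reduce to the case of a strict affinoid pair $(Y,\bar Y)=(\Spec A,\Spec A^+)$, which by \cref{final_object} is a final object of $\Spa(Y,\bar Y)_{\log}$. Second, I aim to construct an open immersion with dense image $(Y',\bar Y')\hookrightarrow(Y,\bar Y)$ such that $(Y',\bar Y')\in\cU'_{\log}$. For this I cover the open subspace $\cU'\cap\Spa(Y,\bar Y)$ by strict affinoid opens $\Spa(B_j,B_j^+)$, each corresponding to an object $(\Spec B_j,\Spec B_j^+)\in\cU'_{\log}$, and use \cref{strict_factorization} together with \cref{find_open_codimension1} to realise these as open subobjects of $(Y,\bar Y)$---possibly after replacing $\bar Y$ by an open subscheme isomorphic to it in codimension one, which is invisible to $\cF$ because $\cF$ is unramified and $\bar Y$ is normal. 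Choosing finitely many $\Spa(B_j,B_j^+)$ meeting every generic point of $\bar Y$ and gluing produces the desired $(Y',\bar Y')$ with dense image in $\bar Y$.

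Finally, the unramified hypothesis on $\cF$ supplies the injection $\cF(Y,\bar Y)\hookrightarrow\cF(Y',\bar Y')$, and compatibility in the limit carries $s_{(Y,\bar Y)}$ to $s_{(Y',\bar Y')}$, which vanishes because $(Y',\bar Y')\in\cU'_{\log}$. Hence $s_{(Y,\bar Y)}=0$, completing the proof.

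The main obstacle is the middle step: producing a single dense open subobject of $(Y,\bar Y)$ that lies in $\cU'_{\log}$. If $\cU'$ fails to meet some generic point of $\bar Y$, no open immersion with dense image from an object of $\cU'_{\log}$ can exist, so the initial Zariski refinement of $(Y,\bar Y)$ has to be chosen carefully to separate the components of $\bar Y$ on which $\cU'$ is generically active from those on which it is not. This is precisely where the codimension-one flexibility afforded by \cref{strict_factorization} and \cref{find_open_codimension1}, combined with the normality of log smooth pairs, is essential; once it is arranged, the three steps combine to give the claimed injectivity.
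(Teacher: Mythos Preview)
Your overall strategy coincides with the paper's: for each $(Y,\bar Y)\in\cU_{\log}$, produce an open immersion with dense image from an object of $\cU'_{\log}$ and invoke the injectivity clause of the unramified property. The difference lies entirely in the middle step, where your argument is both more elaborate than necessary and not correctly supported by the lemmas you cite, while the paper dispatches it in one line.

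The paper does not reduce to a strict affinoid; it reduces to \emph{connected} $(Y,\bar Y)$. Since $\bar Y$ is then normal and connected, it is irreducible, so any nonempty open of $\bar Y$ is automatically dense. The paper simply takes a nonempty open $\bar Y'\subseteq\bar Y$ with $\Spa(Y\cap\bar Y',\bar Y')\subseteq\cU'$; this is the required dense open subpair, and unramifiedness finishes the proof. No gluing, no codimension-one adjustments, and no appeal to \cref{final_object} are needed.

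Your invocations of \cref{strict_factorization} and \cref{find_open_codimension1} are misplaced. \cref{find_open_codimension1} takes as input a finite Riemann--Zariski \emph{covering} of $\Spa(Y,\bar Y)$, which you do not have; \cref{strict_factorization} shrinks the \emph{source} $\bar Z$ in codimension one to manufacture a morphism $(Z,\bar U)\to(Y,\bar Y)$, whereas what you need is to shrink the \emph{target} $\bar Y$ so that your $(\Spec B_j,\Spec B_j^+)$ sits inside it as an honest open subpair. Neither lemma delivers that.

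Your final paragraph raises a genuine point: if $\cU'$ misses an entire connected component of $\Spa(Y,\bar Y)$, no dense open subobject lying in $\cU'_{\log}$ can exist. The paper's reduction to connected $(Y,\bar Y)$ is exactly what isolates this, with the tacit hypothesis that $\cU'$ meets the component under consideration; in the only application (\cref{F_lim_sheaf}) a covering family is present, so every connected component is hit by some member.
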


\begin{proof}
 Suppose $s = (s_{(Y,\bar{Y})})_{(Y,\bar{Y})} \in \cF_{\lim}(\cU)$ maps to zero in $\cF_{\lim}(\cU')$.
 This means that $s_{(Y,\bar{Y})} = 0$ for all $(Y,\bar{Y}) \in \cU'_{\log}$.
 Take any $(Y,\bar{Y})$ in $\cU_{\log}$.
 We have to show that $s_{(Y,\bar{Y})} = 0$.
 Without loss of generality we may assume that $(Y,\bar{Y})$ is connected.
 There is a dense open $\bar{Y}' \subseteq \bar{Y}$ such that, setting $Y' = \bar{Y}' \cap Y$, the morphism $\Spa(Y',\bar{Y}') \to \Spa(X,S)$ factors through~$\cU'$.
 Then $\cF(Y,\bar{Y}) \to \cF(Y',\bar{Y}')$ is injective and $s_{(Y,\bar{Y})}|_{(Y',\bar{Y}')} = 0$, whence $s_{(Y,\bar{Y})} = 0$.
\end{proof}

\begin{proposition} \label{F_lim_sheaf}
 With the above notation $\cF_{\lim}$ is a sheaf on $\Spa(X,S)$.
\end{proposition}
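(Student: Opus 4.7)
The plan is to verify the sheaf axioms for a topological open covering $(\cU_i \to \cU)_{i \in I}$, ultimately reducing everything to the sheaf property of $\cF$ on $(X,S)_{\log}$ together with the injectivity and isomorphism statements built into the unramified hypothesis. Uniqueness is essentially \cref{restrictions_injective} combined with a density argument: given $s \in \cF_{\lim}(\cU)$ vanishing on each $\cU_i$, for a connected $(Y,\bar{Y}) \in \cU_{\log}$ the scheme-theoretic generic point of $Y$, viewed as a point of $\Spa(Y,\bar{Y})$, lies in some $\cU_{i_0}$; hence a dense open immersion $(Y',\bar{Y}') \to (Y,\bar{Y})$ exists with $(Y',\bar{Y}') \in (\cU_{i_0})_{\log}$, and injectivity of $\cF(Y,\bar{Y}) \to \cF(Y',\bar{Y}')$ forces $s_{(Y,\bar{Y})}=0$.

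For the gluing axiom, suppose we are given compatible $(s_i) \in \prod_i \cF_{\lim}(\cU_i)$. I would construct $s_{(Y,\bar{Y})} \in \cF(Y,\bar{Y})$ for each connected $(Y,\bar{Y}) \in \cU_{\log}$ as follows. Using that every affine open $\bar{V} \subseteq \bar{Y}$ gives an affinoid open $\Spa(V,\bar{V}) \subseteq \Spa(Y,\bar{Y})$ with $V = Y \cap \bar{V}$ and $(V,\bar{V}) \in (X,S)_{\log}$, together with the quasi-compactness of $\Spa(Y,\bar{Y})$, I refine the restricted cover $(\cU_i \cap \Spa(Y,\bar{Y}))_i$ to a finite family $(\Spa(V_j,\bar{V}_j))_{j \in J}$ such that each $\Spa(V_j,\bar{V}_j) \subseteq \cU_{i_j}$ for some $i_j \in I$. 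Setting $\bar{U} := \bigcup_j \bar{V}_j$ and $U := Y \cap \bar{U}$, the family $((V_j,\bar{V}_j) \to (U,\bar{U}))_j$ is then a covering in $(X,S)_{\log}$.

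The central technical step—and the expected main obstacle—is to show that $(U,\bar{U}) \sim_1 (Y,\bar{Y})$. Normality of $\bar{Y}$, which follows from log smoothness, means every point $\xi \in \bar{Y}$ of codimension $\le 1$ carries a canonical valuation on $k(\bar{Y}) = k(Y)$ centred at $\xi$, which in turn gives a point of $\Spa(Y,\bar{Y})$; the latter lies in some $\Spa(V_j,\bar{V}_j)$, so $\xi \in \bar{V}_j \subseteq \bar{U}$. Once the $\sim_1$-modification is established, the local sections $(s_{i_j})_{(V_j,\bar{V}_j)}$ agree on each $(V_j \cap V_k,\bar{V}_j \cap \bar{V}_k)$ since this intersection lies inside $\cU_{i_j} \cap \cU_{i_k}$, where the data $(s_i)$ are already compatible. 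The sheaf property of $\cF$ on $(X,S)_{\log}$ then glues them to a unique $s' \in \cF(U,\bar{U})$, and the unramified isomorphism $\cF(Y,\bar{Y}) \xrightarrow{\sim} \cF(U,\bar{U})$ lifts $s'$ to the required $s_{(Y,\bar{Y})}$.

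Three coherence checks remain. First, $s_{(Y,\bar{Y})}$ must be independent of the refinement: two refinements admit a common finer one, and the separatedness established above combined with the $\sim_1$-isomorphism forces agreement. Second, the family $(s_{(Y,\bar{Y})})_{(Y,\bar{Y})}$ must be functorial in $\cU_{\log}$: for $(Y',\bar{Y}') \to (Y,\bar{Y})$, one uses the pullback of the refinement chosen at $(Y,\bar{Y})$ to construct $s_{(Y',\bar{Y}')}$ and observes that both it and the restriction of $s_{(Y,\bar{Y})}$ satisfy the defining gluing property. Third, $s|_{\cU_i} = s_i$ is immediate by taking the one-element refinement $\cU_i \to \cU_i$ at objects of $(\cU_i)_{\log}$. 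Each of these reduces to injectivity and isomorphism statements furnished by the unramified hypothesis; the only real geometric content lies in the codimension-one argument of the previous paragraph.
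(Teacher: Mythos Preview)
Your overall architecture mirrors the paper's, but the refinement step you propose does not go through. You claim that the restricted cover $(\cU_i \cap \Spa(Y,\bar{Y}))_i$ can be refined by finitely many affinoids $\Spa(V_j,\bar{V}_j)$ with $\bar{V}_j \subseteq \bar{Y}$ affine open and $V_j = Y \cap \bar{V}_j$. But opens of this shape do \emph{not} form a basis of the topology of $\Spa(Y,\bar{Y})$: a basis requires affine opens in arbitrary modifications of~$\bar{Y}$, not just in~$\bar{Y}$ itself. Concretely, take $(Y,\bar{Y}) = (\bG_m^2,\A^2)$, blow up the origin to get~$\bar{Y}'$, and cover~$\bar{Y}'$ by the two standard charts $\bar{V}'_1,\bar{V}'_2$. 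The resulting opens $\cU_1,\cU_2 \subseteq \Spa(Y,\bar{Y})$ form a cover, yet any open $\bar{W} \subseteq \A^2$ containing the origin yields an affinoid $\Spa(W,\bar{W})$ containing valuations centred at~$0$ that lie in $\cU_1 \setminus \cU_2$ as well as valuations in $\cU_2 \setminus \cU_1$; if $0 \notin \bar{W}$ it contains neither. Hence no refinement of the type you want exists, and with it your codimension-one argument for $(U,\bar{U}) \sim_1 (Y,\bar{Y})$ collapses.

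The paper circumvents this by first reducing (via \cite{HueSch20}, Lemma~12.10) to \emph{finite Riemann--Zariski} covers $(\Spa(Y_i,\bar{Y}_i) \to \Spa(Y,\bar{Y}))$ arising from honest scheme diagrams. The key geometric input is then \cref{find_open_codimension1}: given $(Z,\bar{Z}) \in \Spa(Y,\bar{Y})_{\log}$, one finds an open $\bar{U} \subseteq \bar{Z}$ isomorphic in codimension one such that $(Z,\bar{U}) \to (X,S)$ factors through $(Y,\bar{Y})$ and the pulled-back family $((Z_i,\bar{U}_i) \to (Z,\bar{U}))_i$ is a genuine Zariski cover in $(X,S)_{\log}$. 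Only then does the sheaf axiom for~$\cF$ apply. So the missing idea in your attempt is precisely this two-step reduction: first to Riemann--Zariski covers, then the factorisation-and-codimension-one lemma allowing the pullback to live in $(X,S)_{\log}$. Your coherence checks at the end are in the right spirit and match what the paper leaves to the reader, but they only become meaningful once the gluing construction itself is on solid ground.
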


\begin{proof}
 It suffices to show the sheaf condition for coverings of the form $(\varphi_i: \Spa(Y_i,\bar{Y}_i) \to \Spa(Y,\bar{Y}))_{i \in I}$ with finite index set~$I$ coming from diagrams
 \[
  \begin{tikzcd}
   Y_i			\ar[r,"\varphi"]		\ar[d]	& Y			\ar[d]	\\
   \bar{Y}_i	\ar[r,"\bar{\varphi}"]			& \bar{Y}.
  \end{tikzcd}
 \]
 Moreover, by \cite{HueSch20}, Lemma~12.10, every such covering has a refinement which is Riemann-Zariski.
 Therefore, we may assume that our covering is Riemann-Zariski.
 We need to show that the sequence
 \[
  \cF_{\lim}(\Spa(Y,\bar{Y})) \to \prod_i \cF_{\lim}(\Spa(Y_i,\bar{Y}_i)) \rightrightarrows \prod_{ij} \cF_{\lim}(\Spa(Y_i,\bar{Y}_i) \cap \Spa(Y_j,\bar{Y}_j))
 \]
 is exact.
 Exactness on the left is assured by \cref{restrictions_injective}.
 Suppose we are given $s_i = (s_{i,(Z_i,\bar{Z}_i)})_{(Z_i,\bar{Z}_i)}$ in $\cF_{\lim}(\Spa(Y_i,\bar{Y}_i))$ such that the restrictions of~$s_i$ and~$s_j$ to $\Spa(Y_i,\bar{Y}_i) \cap \Spa(Y_j,\bar{Y}_j)$ coincide.
 By definition this means that
 \[
  s_{i,(Z,\bar{Z})} = s_{j,(Z,\bar{Z})}
 \]
 for all $(Z,\bar{Z}) \in (\Spa(Y_i,\bar{Y}_i) \cap \Spa(Y_j,\bar{Y}_j))_{\log}$.

 We have to find $s \in \cF_{\lim}(\Spa(Y,\bar{Y}))$ with $s|_{\Spa(Y_i,\bar{Y}_i)} = s_i$ for all~$i$.
 Let $(Z,\bar{Z})$ be in $\Spa(Y,\bar{Y})_{\log}$.
 In the following we explain how to define $s_{(Z,\bar{Z})}$.
 \cref{find_open_codimension1} provides us with an open subscheme $\bar{U} \subseteq \bar{Z}$ isomorphic in codimension one and containing~$Z$ such that $(Z,\bar{U}) \to \Spa(X,S)$ factors through $(Y,\bar{Y})$ and $((Z_i,\bar{U}_i) \to (Z,\bar{U}))_{i \in I}$ is a covering in $(X,S)_{\log}$ (where $Z_i = Z \times_Y Y_i$ and $\bar{U}_i = \bar{U} \times_{\bar{Y}} \bar{Y}_i$).
 Since~$\bar{\cF}$ is a sheaf on $(X,S)_{\log}$, the sequence
 \[
  0 \to \cF(Z,\bar{U}) \to \prod_i \cF(Z_i,\bar{U}_i) \to \prod_{i,j} \cF(Z_i \cap Z_j,\bar{U}_i \cap \bar{U}_j)
 \]
 is exact.
 The sections $s_{i,(Z_i,\bar{U}_i)} \in \cF(Z_i,\bar{U}_i)$ coincide on the intersections $(Z_i \cap Z_j,\bar{U}_i \cap \bar{U}_j)$.
 They thus lift to a unique section~$s_{(Z,\bar{U})}$ of~$\cF(Z,\bar{U})$.
 We define $s_{(Z,\bar{Z})}$ to be the preimage of $s_{(Z,\bar{U})}$ under the isomorphism $\cF(Z,\bar{Z}) \to \cF(Z,\bar{U})$.
 It follows from the fact that~$\cF$ is unramified that the~$s_{(Z,\bar{Z})}$ are compatible and define an element of $\cF_{\lim}(\Spa(Y,\bar{Y}))$.
 We leave the details to the reader.

 Let us show that $s|_{\Spa(Y_i,\bar{Y}_i)} = s_i$.
 This is equivalent to showing that for all $(Z,\bar{Z}) \in \Spa(Y_i,\bar{Y}_i)_{\log}$ we have $s_{(Z,\bar{Z})} = s_{i,(Z,\bar{Z})}$.
 By unramifiedness we can check this equality after restricting to $(Z,\bar{U})$ for an open subscheme $\bar{U} \subseteq \bar{Z}$ isomorphic in codimension one and containing~$Z$.
 By \cref{find_open_codimension1} we may thus assume that $(Z,\bar{Z}) \to (X,S)$ factors through $(Y_i,\bar{Y}_i)$ and $((Z_j,\bar{Z}_j) \to (Z,\bar{Z}))_{j \in I}$ (for $Z_j = Z_i \times_{Y_i},Y_j$ and $\bar{Z}_j = \bar{Z}_i \times_{\bar{Y}_i} \bar{Y}_j$) is a covering in $\Spa(X,S)_{\log}$.
 By construction, $s_{(Z,\bar{Z})}$ is uniquely defined by the condition $s_{(Z,\bar{Z})}|_{(Z_j,\bar{Z}_j)} = s_{j,(Z_i,\bar{Z}_j)}$ for all $j \in I$.
 In particular, $s_{(Z,\bar{Z})}|_{(Z_i,\bar{Z}_i)} = s_{i,(Z_i,\bar{Z}_i)}$.
 But $\bigcup_i(Z_i,\bar{Z}_i) = (Z,\bar{Z})$, so $s_{(Z,\bar{Z})} = s_{i,(Z,\bar{Z})}$.
\end{proof}

\begin{lemma}
 $\Omega^{n,\log}$ is unramified.
\end{lemma}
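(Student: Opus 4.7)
The plan is to reduce the two requirements of unramifiedness (injectivity of restriction along open immersions with dense image, and isomorphism under $\sim_1$) to standard facts about locally free coherent sheaves on normal schemes. The two inputs I will rely on are: (a) for a log smooth pair $(Y,\bar Y)$ the sheaf $\Omega^{n,\log}_{\bar Y}$ is a locally free coherent $\cO_{\bar Y}$-module of finite rank (Kato's structure theorem for log smooth morphisms, combined with $\Omega^{n,\log}=\bigwedge^n\Omega^{1,\log}$); and (b) $\bar Y$ is normal, which is built into the definition of the objects of $(X,S)_{\log}$ since $\bar Y\to S$ is a normalization of a finite type $S$-scheme.

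First I would unwind the definitions. An open immersion $(Y',\bar Y')\to(Y,\bar Y)$ in $(X,S)_{\log}$ is simply a dense open subscheme $\bar Y'\subseteq\bar Y$ together with $Y'=Y\cap\bar Y'$; log smoothness is Zariski-local, so $(Y',\bar Y')$ is again log smooth and $\Omega^{n,\log}_{\bar Y'}=\Omega^{n,\log}_{\bar Y}|_{\bar Y'}$. Consequently both sides of the restriction
\[
 \Omega^{n,\log}(Y,\bar Y)=H^0(\bar Y,\Omega^{n,\log}_{\bar Y})\longrightarrow H^0(\bar Y',\Omega^{n,\log}_{\bar Y}|_{\bar Y'})=\Omega^{n,\log}(Y',\bar Y')
\]
are global sections of the same locally free sheaf on $\bar Y$ restricted to open subschemes.

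For injectivity I would argue that since $\Omega^{n,\log}_{\bar Y}$ is locally free (hence torsion free on each irreducible component of the normal scheme $\bar Y$), any section whose restriction to a dense open vanishes must vanish identically on every connected component, because the vanishing locus is closed and contains a dense subset. For the isomorphism assertion under $(Y',\bar Y')\sim_1(Y,\bar Y)$, the complement $\bar Y\setminus\bar Y'$ has codimension $\ge 2$ in the normal scheme $\bar Y$, and the standard Hartogs-type principle for locally free (equivalently reflexive) sheaves on normal schemes — depth of $\Omega^{n,\log}_{\bar Y}$ at a codimension $\ge 2$ point is at least $2$ by local freeness combined with normality, so sections extend uniquely — produces a bijection of global sections.

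The main potential obstacle is bookkeeping: making sure that the $\Omega^{n,\log}$ defined via the presheaf $(\Spec A,\Spec A^+)\mapsto\Omega^{n,\log}_{(A,A^+)}$ and then sheafified actually coincides, on a log smooth pair, with the coherent sheaf of log K\"ahler $n$-forms to which Kato's theorem applies. This is essentially by construction: on strict affinoids of the form $(\Spec A,\Spec A^+)$ with $(A,A^+)$ log smooth, $\Omega^{n,\log}_{(A,A^+)}$ is the $A^+$-module of log K\"ahler differentials, and these glue via Zariski descent of the associated locally free coherent sheaf on $\bar Y$, so no additional sheafification is needed in the log smooth range.
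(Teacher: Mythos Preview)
Your argument is correct and is essentially the same as the paper's: both reduce to the fact that on a log smooth pair $(Y,\bar Y)$ the sheaf $\Omega^{n,\log}_{\bar Y}$ is locally free of finite rank over the structure sheaf of the normal scheme $\bar Y$, and then invoke the Hartogs-type extension property for such sheaves. The paper packages this by first observing that $(Y,\bar Y)\mapsto \cO_{\bar Y}(\bar Y)$ is unramified (via the characterization of a normal noetherian domain as the intersection of its localizations at height one primes) and then noting that any locally free module over an unramified sheaf is again unramified, whereas you argue directly with torsion-freeness and depth; these are the same content.
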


\begin{proof}
 Theorem~38 in \cite{Mat70} says that a noetherian normal domain is the intersection of the localizations at its height one prime ideals.
 It follows from this that the sheaf~$\cO$ on $(X,S)_{\log}$ defined by
 \[
  (Y,\bar{Y}) \mapsto \cO_{\bar{Y}}(\bar{Y})
 \]
 is unramified.
 Since the objects of $(X,S)_{\log}$ are log smooth, $\Omega^{n,\log}$ is a locally free $\cO$-module.
 Hence, it is unramified as well.
\end{proof}

\subsection{The comparison theorem}

We have a natural map $\Omega^{n,\log} \to \Omega^{n,\log}_{\lim}$ of presheaves on the site of strict affinoids $\Spa(X,S)_{\straff}$.
Since~$\Omega^{n,\log}_{\lim}$ is a sheaf by \cref{F_lim_sheaf}, this map factors through the sheafification~$\Omega^{n,+}$ of~$\Omega^{n,\log}$:
\[
 \Omega^{n,\log} \to \Omega^{n,+} \to \Omega^{n,\log}_{\lim}.
\]

\begin{proposition} \label{comparison_straff}
 Let $(A,A^+)$ be log smooth.
 Then the natural homomorphism
 \[
  \Omega^{n,\log}_{(A,A^+)} \to \Omega^{n,+}(\Spa(A,A^+))
 \]
 is an isomorphism.
\end{proposition}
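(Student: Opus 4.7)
The strategy is to exploit \cref{final_object} to identify $\Omega^{n,\log}_{\lim}(\Spa(A,A^+))$ with $\Omega^{n,\log}_{(A,A^+)}$ and then use the factorization
\[
 \Omega^{n,\log}_{(A,A^+)} \xrightarrow{f} \Omega^{n,+}(\Spa(A,A^+)) \xrightarrow{g} \Omega^{n,\log}_{\lim}(\Spa(A,A^+))
\]
introduced just before the proposition to sandwich $\Omega^{n,+}(\Spa(A,A^+))$ between two copies of $\Omega^{n,\log}_{(A,A^+)}$.

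By \cref{final_object}, $(\Spec A, \Spec A^+)$ is a terminal object of $\Spa(A,A^+)_{\log}$, so a limit over this category is computed by projection to the terminal object, giving a canonical isomorphism
\[
 \rho: \Omega^{n,\log}_{\lim}(\Spa(A,A^+)) \xrightarrow{\sim} \Omega^{n,\log}(\Spec A, \Spec A^+) = \Omega^{n,\log}_{(A,A^+)}.
\]
The composition $\rho \circ g \circ f$ then equals the natural presheaf transition $\Omega^{n,\log}_{(A,A^+)} \to \Omega^{n,\log}_{\lim}(\Spa(A,A^+))$ followed by $\rho$, and since this transition is by construction the canonical cone map to the limit, the composition is the identity on $\Omega^{n,\log}_{(A,A^+)}$. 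Hence $\rho \circ g$ is a retraction of $f$, and in particular $f$ is split injective.

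To upgrade $f$ to an isomorphism, it suffices to show that $\rho \circ g$ is injective. For this, consider the diagram
\[
 \begin{tikzcd}
  \Omega^{n,+}(\Spa(A,A^+)) \ar[r,"\rho \circ g"] \ar[dr,hook,"j"'] & \Omega^{n,\log}_{(A,A^+)} \ar[d,"i"] \\
  & \Omega^n_A,
 \end{tikzcd}
\]
where $j$ is the inclusion from \cref{sheafification_logarithmic} and $i$ is the natural map. Log smoothness of $(A,A^+)$ makes $\Omega^{n,\log}_{(A,A^+)}$ a finite locally free $A^+$-module, so $i$ is injective (it can be verified stalkwise using \cref{Omega_injective} and the isomorphism $\Omega^{n,\log}_{(A,A^+)} \otimes_{A^+} A \cong \Omega^n_A$). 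On the image of $f$ the diagram commutes: $j \circ f = i$ because $j$ sheafifies the natural inclusion $\Omega^{n,\log} \hookrightarrow \Omega^n$, and $i \circ (\rho \circ g) \circ f = i$ by the preceding step.

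The main obstacle is extending commutativity from the image of $f$ to all of $\Omega^{n,+}(\Spa(A,A^+))$: a section of $\Omega^{n,+}$ is only locally of the form $f(\alpha)$, and the strict affinoid covers involved need not themselves be log smooth, so the terminal-object description of $\rho \circ g$ is not directly available on the pieces. One has to verify the commutativity on such a cover and then glue using the sheaf property of $\Omega^n$. Once $j = i \circ (\rho \circ g)$ is established, injectivity of $j$ forces $\rho \circ g$ to be injective, and combined with $\rho \circ g \circ f = \id$ this shows that both $\rho \circ g$ and $f$ are isomorphisms, completing the proof.
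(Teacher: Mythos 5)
Your proposal is correct and follows essentially the same route as the paper: the paper considers the chain $\Omega^{n,\log}_{(A,A^+)} \xrightarrow{\varphi_1} \Omega^{n,+}(\Spa(A,A^+)) \xrightarrow{\varphi_2} \Omega^{n,\log}_{\lim}(\Spa(A,A^+)) \xrightarrow{\varphi_3} \Omega^n_A$, uses \cref{final_object} to see that $\varphi_2\circ\varphi_1$ is the identity and that $\varphi_3\circ\varphi_2$ is the natural inclusion, and concludes by the same diagram chase. The commutativity point you flag as the ``main obstacle'' is exactly the assertion the paper makes when it states that $\varphi_3\circ\varphi_2$ is the natural inclusion, so your write-up is, if anything, more explicit about that step than the original.
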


\begin{proof}
 Consider the chain of homomorphisms
 \[
  \Omega^{n,\log}_{(A,A^+)} \overset{\varphi_1}{\to} \Omega^{n,+}(\Spa(A,A^+)) \overset{\varphi_2}{\to} \Omega^{n,\log}_{\lim}(\Spa(A,A^+)) \overset{\varphi_3}{\to} \Omega^n_A.
 \]
 By \cref{final_object} we know that $(\Spec A,\Spec A^+)$ is a final object of $\Spa(A,A^+)_{\log}$.
 Hence, we can identify $\Omega^{n,\log}_{\lim}(\Spa(A,A^+))$ with $\Omega^{n,\log}_{(A,A^+)}$ and then $\varphi_2 \circ \varphi_1$ is the identity.
 Moreover, $\varphi_3 \circ \varphi_2$ is the natural inclusion.
 We obtain
 \[
  \begin{tikzcd}
   \Omega^{n,\log}_{(A,A^+)}	\ar[r,"\varphi_1"]	\ar[rr,bend left,"\id"]	& \Omega^{n,+}(\Spa(A,A^+))	\ar[r,"\varphi_2"]	\ar[rr,bend right,"\text{inclusion}"]	& \Omega^{n,\log}_{\lim}(\Spa(A,A^+))	\ar[r,"\varphi_3"]	& \Omega^n_A.
  \end{tikzcd}
 \]
 A diagram chase shows that~$\varphi_1$ and~$\varphi_2$ are isomorphisms.
\end{proof}

\begin{theorem} \label{main_theorem}
 Let $(Y,\bar{Y})$ be in $(X,S)_{\log}$.
 Then
 \[
  \Omega^{n,+}(\Spa(Y,\bar{Y})) \cong \Omega^{n,\log}(Y,\bar{Y}),
 \]
 where~$\Omega^{n,\log}$ denotes the sheaf of logarithmic differentials on $(X,S)_{\log}$.
\end{theorem}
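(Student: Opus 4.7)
The plan is to reduce the global statement to the affinoid comparison already established in \cref{comparison_straff} by recognising both sides as values of sheaves on $(X,S)_{\log}$ that agree on a basis. Concretely, evaluating the canonical morphism $\varphi: \Omega^{n,\log} \to \ell_*\Omega^{n,+}$ constructed in the setup at $(Y,\bar{Y})$ produces the map
\[
 \Omega^{n,\log}(Y,\bar{Y}) \longrightarrow \ell_*\Omega^{n,+}(Y,\bar{Y}) = \Omega^{n,+}(\Spa(Y,\bar{Y})),
\]
so it is enough to prove that $\varphi$ is an isomorphism of sheaves on the whole site $(X,S)_{\log}$.

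First I would observe that both sides of $\varphi$ are indeed sheaves on $(X,S)_{\log}$: the source by construction, the target because $\Omega^{n,+}$ is a sheaf on $\Spa(X,S)$ by \cref{Kaeher_sheaf} and pushforward along the morphism of sites $\ell$ preserves sheaves. It therefore suffices to verify that $\varphi$ is an isomorphism on a basis of the topology of $(X,S)_{\log}$. Next I would invoke the local structure of log smooth pairs: given any object $(Z,\bar{Z})$ of $(X,S)_{\log}$, one covers $\bar{Z}$ by affine opens $\Spec A_i^+$ and refines further to principal affines on which $Z \cap \Spec A_i^+ = \Spec A_i^+[f_i^{-1}] =: \Spec A_i$ is itself affine; this is possible because $\bar{Z} \setminus Z$ is locally a normal crossings divisor by log smoothness. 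Each resulting pair $(A_i,A_i^+)$ is then a log smooth strict Huber pair, and these form the promised basis (as is already remarked in the setup right before the definition of $\varphi$).

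The heart of the argument is then \cref{comparison_straff}, which says precisely that on such a basic object $\varphi$ restricts to the isomorphism $\Omega^{n,\log}_{(A,A^+)} \to \Omega^{n,+}(\Spa(A,A^+))$. Combining these local isomorphisms with the sheaf property of both $\Omega^{n,\log}$ and $\ell_*\Omega^{n,+}$, and using that intersections of two basis opens are again covered by basis opens (after a further refinement, as above), one deduces that $\varphi$ is an isomorphism on every object of $(X,S)_{\log}$. Evaluation at $(Y,\bar{Y})$ then yields the asserted isomorphism $\Omega^{n,+}(\Spa(Y,\bar{Y})) \cong \Omega^{n,\log}(Y,\bar{Y})$. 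The main technical obstacle I anticipate is the careful verification that log smooth strict affinoids really form a basis of $(X,S)_{\log}$, which rests on the local coordinatisation of log smooth schemes; once this is granted, the substantive input — that one need not sheafify $\Omega^{n,\log}$ in order to compute global sections of $\Omega^{n,+}$ on $\Spa(Y,\bar{Y})$ — has already been absorbed into \cref{comparison_straff} via the unramified-sheaf and $\cF_{\lim}$ machinery developed in \cref{section_unramified_sheaves}.
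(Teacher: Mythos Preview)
Your proposal is correct and follows essentially the same route as the paper: both arguments show that the morphism $\varphi:\Omega^{n,\log}\to\ell_*\Omega^{n,+}$ of sheaves on $(X,S)_{\log}$ is an isomorphism by reducing to the affine log smooth case handled in \cref{comparison_straff}. The paper packages the ``check on a basis'' step as an equivalence of topoi between $(X,S)_{\log}$ and its full subcategory~$\cC$ of affine objects $(\Spec A,\Spec A^+)$, together with a small commutative square of morphisms of sites, but this is just a formal rephrasing of your basis argument.

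One minor point: your justification that affine log smooth strict pairs form a basis is phrased via ``$\bar Z\setminus Z$ is locally a normal crossings divisor'', which is not the right statement in general. What you actually need (and what holds) is that for a log smooth pair $(Z,\bar Z)$ the open $Z\subset\bar Z$ is, Zariski-locally on~$\bar Z$, the complement of a principal divisor: \'etale-locally $\bar Z$ admits a chart by a fine saturated monoid~$P$, and $Z$ is then the preimage of $\Spec k[P^{\gp}]$, which is the localisation of $k[P]$ at the single element given by the product of a set of generators of~$P$. This yields the desired cover by strict affinoids, and the rest of your argument goes through unchanged.
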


\begin{proof}
 Consider the subcategory~$\cC$ of $(X,S)_{\log}$ of objects of the form $(\Spec A,\Spec A^+)$.
 It is a site with the induced topology and the topoi associated with~$\cC$ and $(X,S)_{\log}$ are equivalent.
 We consider the following morphism of sites
 \begin{align*}
  \pi^{\straff}: \Spa(X,S)_{\straff}	& \longrightarrow	 \cC	\\
  \Spa(A,A^+)							& \mapsfrom		 (\Spec A,\Spec A^+),
 \end{align*}
 It fits into the following commutative diagram of morphisms of sites
 \[
  \begin{tikzcd}
   \Spa(X,S)_{\Top}		\ar[r,"\pi"]	\ar[d,"\iota^{\straff}"]	& (X,S)_{\log}	\ar[d,"\iota^{\cC}"]	\\
   \Spa(X,S)_{\straff}	\ar[r,"\pi^{\straff}"]						& \cC.
  \end{tikzcd}
 \]
 It follows by construction that $\pi^{\straff}_* \iota^{\straff}_* \cF = \iota^{\cC}_* \pi_* \cF$ for any presheaf~$\cF$ on $\Spa(X,S)_{\Top}$.
 Applying~$\pi^{\straff}_*$ to the homomorphism $\Omega^{n,\log} \to \iota^{\straff}_*\Omega^{n,+}$ of presheaves on $\Spa(X,S)_{\straff}$, we obtain a homomorphism
 \[
  \pi^{\straff}_* \Omega^{n,\log} \to \iota^{\cC}_* \pi_* \Omega^{n,+}.
 \]
 Unraveling the definitions, we see that $\pi^{\straff}_* \Omega^{n,\log}$ equals $\iota^{\cC}_*\Omega^{n,\log}$ (where now~$\Omega^{n,\log}$ denotes the sheaf of logarithmic differentials on $(X,S)_{\log}$).
 By \cref{comparison_straff} the above homomorphism is an isomorphism.
 Since the topoi associated to~$\cC$ and $(X,S)_{\log}$ are equivalent, we obtain an isomorphism
 \[
  \Omega^{n,\log} \to \pi_* \Omega^{n,+}
 \]
 of sheaves on $(X,S)_{\log}$.
 Evaluating at an object $(Y,\bar{Y})$ in $(X,S)_{\log}$ yields the result.
\end{proof}

\bibliographystyle{../meinStil}
\bibliography{../citations}

\end{document}